\def\Xint#1{\mathchoice
	{\XXint\displaystyle\textstyle{#1}}
	{\XXint\textstyle\scriptstyle{#1}}
	{\XXint\scriptstyle\scriptscriptstyle{#1}}
	{\XXint\scriptscriptstyle\scriptscriptstyle{#1}}
	\!\int}
\def\XXint#1#2#3{{\setbox0=\hbox{$#1{#2#3}{\int}$ }
		\vcenter{\hbox{$#2#3$ }}\kern-.6\wd0}}
\def\dashint{\Xint-}
\newtheorem{theorem}{Theorem}[section]
\newtheorem{lemma}[theorem]{Lemma}
\newtheorem{proposition}[theorem]{Proposition}
\newtheorem{corollary}[theorem]{Corollary}
\newtheorem{definition}[theorem]{Definition}
\newtheorem{example}[theorem]{Example}
\newtheorem{question}[theorem]{Question}
\numberwithin{equation}{section}
\newcommand{\Mod}{{\rm Mod}}
\newcommand{\N}{\mathbb{N}}
\newcommand{\Ha}{\mathcal{H}}
\newcommand{\R}{\mathbb{R}}
\newcommand{\wtil}{\widetilde}
\newcommand{\diam}{\text{\rm diam}}
\newcommand{\rad}{\text{\rm rad}}
\title[Large-scale behaviour of Sobolev functions]{Large-scale behaviour of Sobolev functions in Ahlfors regular metric measure spaces}
\author{Josh Kline}
\address{Department of Mathematical Sciences, P.O. Box 210025, University of Cincinnati,
Cincinnati, OH 45221–0025, U.S.A.}
\email{klinejp@ucmail.uc.edu}
\author{Pekka Koskela}
\address{Department of Mathematics and Statistics \\
P.O. Box 35 \\
FI-40014 University of Jyväskylä, Finland}
\email{pekka.j.koskela@jyu.fi}
\author{Khanh Nguyen}
\address{Academy of Mathematics and Systems Science, Chinese Academy of Sciences, Beijing 100190, PR China}
\email{khanhnguyen@amss.ac.cn}
\subjclass[2020]{46E36, 31B15, 31C15, 31B25.\\ Key words and phases: Behaviour at infinity, limit at infinity, Sobolev function, metric measure space.}
\date{\today}
\begin{document}
\maketitle
\begin{abstract}
In this paper, we study the behaviour at infinity of $p$-Sobolev functions in the setting of Ahlfors $Q$-regular metric measure spaces supporting a $p$-Poincar\'e inequality.  By introducing the notions of sets which are $p$-thin at infinity, we show that functions in the homogeneous space $\dot N^{1,p}(X)$ necessarily have limits at infinity outside of $p$-thin sets, when $1\le p<Q<+\infty$.  When $p>Q$, we show by example that uniqueness of limits at infinity may fail for functions in $\dot N^{1,p}(X)$. While functions in $\dot N^{1,p}(X)$ may not have any reasonable limit at infinity when $p=Q$, we introduce the notion of a $Q$-thick set at infinity, and characterize the limits of functions in $\dot N^{1,Q}(X)$ along infinite curves in terms of limits outside $Q$-thin sets and along $Q$-thick sets.  By weakening the notion of a thick set, we show that a function in $\dot N^{1,Q}(X)$ with a limit along such an almost thick set may fail to have a limit along any infinite curve.  While homogeneous $p$-Sobolev functions may have infinite limits at infinity when $p\ge Q$, we provide bounds on how quickly such functions may grow: when $p=Q$, functions in $\dot N^{1,p}(X)$ have sub-logarithmic growth at infinity, whereas when $p>Q$, such functions have growth at infinity controlled by $d(\cdot, O)^{1-Q/p}$, where $O$ is a fixed base point in $X$.  For the inhomogeneous spaces $N^{1,p}(X)$, the phenomenon is different.  We show that for $1\le p\le Q$, the limit of a function $u\in N^{1,p}(X)$ is zero outside of a $p$-thin set, whereas $\lim_{x\to+\infty}u(x)=0$ for all $u\in N^{1,p}(X)$ when $p>Q$. 

\end{abstract}

 \section{Introduction}

 The aim of this paper is to understand the large-scale behaviour of homogeneous and inhomogeneous $p$-Sobolev functions, with $1\le p<+\infty$, when $(X,d,\mu)$ is an Ahlfors $Q$-regular metric measure space, with $1<Q<+\infty$, which supports a $p$-Poincar\'e inequality, as defined in Section~\ref{admissible}.  In particular, we explore whether the limit 
 \begin{equation}\label{eq1-2805}
		\lim_{x\to{+\infty}} u(x)
	\end{equation}
 exists for such Sobolev functions $u$.  Here  $x\to{+\infty}$ means that $|x|\to{+\infty}$ where $|x|:=d(O,x)$ for a given $O\in X$. 
 For $1\le p<+\infty$, the homogeneous Sobolev space, denoted $\dot N^{1,p}(X):=\dot N^{1,p}(X,d,\mu)$, is the space of locally integrable  functions that have a $p$-integrable  upper gradient on $X$, where the notion of upper gradients is given in Section \ref{modulus-capacity}.  As usual, the Sobolev space  $N^{1,p}(X):=N^{1,p}(X,d,\mu),$ $1\le p<+\infty$, is the collection of all $p$-integrable functions on $(X,d,\mu)$ belonging to $\dot  N^{1,p}(X)$.
 
 Recently, the second and third authors established the existence and uniqueness of limits at infinity of functions in $\dot N^{1,p}(X)$ for $1\le p<+\infty$, along infinite curves outside a zero modulus family.  This is under the assumption that the  space is equipped with a doubling measure which supports a $p$-Poincar\'e inequality, and that the space satisfies the annular chain property, see \cite{KN22} and Section~\ref{sec-dyadic}. 
 In particular, it was shown that for every $u\in \dot N^{1,p}(X),$ there exists a constant $c\in\mathbb R$, depending only on $u$, such that 
 	\begin{equation}\label{eq2-2805}
	\lim_{t\to{+\infty}}u(\gamma(t)) \text{\rm \ \ exists and is equal to $c$ for $p$-almost every $\gamma\in\Gamma^{+\infty}$}.
	\end{equation}
Here, $\Gamma^{+\infty}$ is the collection of all infinite curves, defined in Section \ref{sec-polar}. By $p$-almost every curve, we mean that the conclusion holds for every curve outside a family with zero $p$-modulus, see Section \ref{modulus-capacity}. 
If there exists $c\in[-\infty,+\infty]$ such that $\lim_{t\to+\infty}u(\gamma(t))=c$, then we say that $u$ has a limit along $\gamma$.  If $c\in\R$, then we say that $u$ has a finite limit along $\gamma$. For a recent thorough investigation of when $\dot N^{1,p}(X)=N^{1,p}(X)+\mathbb{R}$ in metric measure spaces of uniformly locally controlled geometry, see \cite{GKS24}. For each $1\leq p <\infty$, there exists a function $f\in \dot N^{1,p}(X)$ 
{ such that, although}
 the limit at infinity of $f$ along $p$-almost every infinite curve exists and equals to a finite constant by \eqref{eq2-2805}, the function $f$ may fail to be in $N^{1,p}(X)+\mathbb R$ by \cite[Theorem 1.1]{GKS24}.
Moreover, we refer the readers to  \cite{LN24} for a discussion on \eqref{eq2-2805} for bounded variation functions, and that the conditions of doubling and Poincar\'e inequalities are{
necessary} for the existence of limits at infinity along $1$-almost every infinite curve of bounded variation functions.

Under the assumptions of Ahlfors $Q$-regularity, we first show that we can improve{
the above result of \cite{KN22}} when $1\le p<\infty$ is strictly less than $Q$ and $X$ supports a $p$-Poincar\'e inequality.  For $1\le p\le Q$, we define the notion of a $p$-thin set, which roughly speaking has sufficiently small Hausdorff content in dyadic annuli at infinity. See Section~\ref{sec-thin} for the precise definitions.  We then obtain the following result:  
\begin{theorem}\label{thm:p<Q Thin}
    Let $1< Q<+\infty$ and let $1\le p<Q$.  Suppose that $(X,d,\mu)$ is a complete, unbounded metric measure space with metric $d$ and Ahlfors $Q$-regular measure $\mu$ supporting a $p$-Poincar\'e inequality. Then for every $u\in\dot N^{1,p}(X)$, there exists a constant $c\in\R$ and a $p$-thin set $E\subset X$ such that 
    \[
    \lim_{E\not\ni x\to+\infty}u(x)=c.
    \]
    Furthermore, there is only one such $c\in\R$ for which the above holds.
\end{theorem}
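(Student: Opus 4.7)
The plan is to identify the candidate limit via the curvewise convergence result of the second and third authors, and then to convert curvewise convergence into pointwise convergence outside a $p$-thin exceptional set by a capacity/Hausdorff content argument on dyadic annuli at infinity.

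Since Ahlfors $Q$-regularity implies the doubling condition and the annular chain property, the main result of \cite{KN22} applies and yields a unique $c\in\R$ (depending only on $u$) such that $u(\gamma(t))\to c$ as $t\to+\infty$ for $p$-almost every $\gamma\in\Gamma^{+\infty}$. This $c$ is the only candidate for the limit asserted by the theorem. For each $n\in\N$, write $A_n:=B(O,2^{n+1})\setminus B(O,2^n)$ and let $A_n^*$ denote a modest enlargement. Given $\epsilon>0$, set $F_n(\epsilon):=\{x\in A_n:|u(x)-c|>\epsilon\}$. The central analytic step I plan to prove is a capacitary estimate of the form
\[
\mathcal{H}^{Q-p}_\infty(F_n(\epsilon))\lesssim \epsilon^{-p}\int_{A_n^*}g_u^{p}\,d\mu,
\]
where $g_u$ is an upper gradient of $u$. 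To establish this, I would first locate a ball $B_n\subset A_n^*$ of radius comparable to $2^n$ whose average $u_{B_n}$ satisfies $|u_{B_n}-c|\to 0$ as $n\to\infty$: a Fubini argument along the $p$-positive family of infinite curves converging to $c$ produces such a ball. From $u_{B_n}$, the $p$-Poincar\'e inequality combined with the standard $p$-capacity/Hausdorff $(Q-p)$-content comparison for $p<Q$ on Ahlfors $Q$-regular spaces (which is where $p<Q$ is crucially used, since $Q-p>0$) supplies the displayed bound.

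Granted the estimate, I choose $\epsilon_n\downarrow 0$ slowly enough that $\sum_{n}\epsilon_n^{-p}\int_{A_n^*}g_u^{p}\,d\mu<+\infty$; this is possible because $g_u\in L^p(X)$ makes $\sum_n\int_{A_n^*}g_u^p\,d\mu$ finite. Setting $E:=\bigcup_n F_n(\epsilon_n)$, the summability upgrades the annular content bounds to the global condition defining $p$-thinness, and by construction $|u(x)-c|\le\epsilon_n\to 0$ whenever $x\in A_n\setminus E$, giving $\lim_{E\not\ni x\to+\infty}u(x)=c$. Uniqueness follows because the union of two admissible $p$-thin sets remains $p$-thin and hence cannot exhaust any tail $\{|x|>R\}$, so two distinct candidate constants would force contradictory limits along common non-exceptional points. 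The main obstacle is the capacitary estimate: one must transfer the essentially one-dimensional curvewise information $u(\gamma(t))\to c$ into a quantitative statement about the distribution of $u$ on each annulus, which requires combining a Fubini argument for the $p$-modulus curve family with a telescoped Poincar\'e inequality along chains of balls whose Poincar\'e constants remain controlled by the annular chain property.
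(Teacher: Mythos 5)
Your overall architecture is the same as the paper's: identify the candidate $c$ from the curvewise convergence result of \cite{KN22} (valid since completeness, Ahlfors $Q$-regularity and the $p$-Poincar\'e inequality with $p\le Q$ give the annular chain property), decompose into dyadic annuli, bound the content of the exceptional set in each annulus by the local $p$-energy, choose thresholds $\epsilon_n\downarrow 0$ slowly via summability of $\sum_n\int_{A_n^*}g_u^p\,d\mu$, and prove uniqueness by noting that a union of two $p$-thin sets is $p$-thin and cannot cover a whole large annulus. However, your central estimate is stated at the wrong codimension and is false as written. The inequality $\mathcal{H}^{Q-p}_{+\infty}(F_n(\epsilon))\lesssim\epsilon^{-p}\int_{A_n^*}g_u^p\,d\mu$ would say that $p$-capacity controls $(Q-p)$-dimensional content at the critical exponent; this fails already in $\R^n$ (a line segment has zero $(n-1)$-capacity but positive $\mathcal{H}^1$-content, so no such comparison can hold). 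Technically, the telescoping Poincar\'e argument gives $|u(x)-u_{B(x,2^n)}|\lesssim\sum_i 2^{(n-i)}\bigl(\dashint_{\lambda B_i}\rho_u^p\,d\mu\bigr)^{1/p}$, and to dominate this by a fractional maximal function one must spend a positive power $\alpha$ to make the geometric series converge; at $\alpha=0$ the series does not sum. This is exactly why the paper's definition of a $p$-thin set is phrased in terms of the whole subcritical family $\mathcal{H}^{Q-(p-\alpha)}_{+\infty}(E\cap A_{C2^{j+1}})/(C2^{j+1})^{\alpha}$ for all $0<\alpha<p$, and why its Lemma~\ref{lem3.2} produces precisely those normalized estimates. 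Your single critical-exponent bound neither holds nor, even if it did, would it verify the definition of $p$-thinness you need. The repair is to replace your displayed estimate by the family $\mathcal{H}^{Q-(p-\alpha)}_{+\infty}(F_n(\epsilon))\lesssim(2^n)^{\alpha}\epsilon^{-p}\int_{A_n^*}g_u^p\,d\mu$ for every $0<\alpha<p$, with implicit constant independent of $\alpha$ only where needed.

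A second, smaller gap is the step ``a Fubini argument produces a ball $B_n$ with $|u_{B_n}-c|\to 0$.'' Positivity of $\Mod_p(\Gamma_1)$ only gives $\mu(F\cap A_n)\gtrsim(2^n)^p$, which for $p<Q$ is a vanishing fraction of $\mu(A_n)\simeq(2^n)^Q$, so no averaging over a full-size ball comes for free. The paper handles this by a three-case analysis: either some $y\in F\cap A_n$ has $|u(y)-u_{B(y,2^n)}|\le a_n/10$ (the ball you want), or every such $y$ has large oscillation, in which case Lemma~\ref{lem3.2} bounds the content of $F\cap A_n$ from above while the admissibility of $\chi_{F\cap A_n}/(\lambda 2^{n+1})$ for $\Mod_p(\Gamma_1)$ bounds it from below, forcing $1\lesssim a_n^{-p}\int_{c_2\cdot A_n}\rho_u^p\,d\mu$ — which summability rules out for all large $n$. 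A similar dichotomy (via the chaining Lemma~\ref{lem3.1}) is needed to pass from $u_{B_n}\approx c$ to control of $u(x)$ for $x$ in the bad set. You should make these case distinctions explicit rather than appealing to Fubini.
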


To prove this theorem, we use the result from \cite{KN22} which states that $u$ has a limit at infinity along a family of infinite curves with positive $p$-modulus.  By using this family of curves, we can construct the stated $p$-thin set.  However, when $p\ge Q$, the $p$-modulus of $\Gamma^{+\infty}$ is zero, see for instance \cite[Chapter 5]{pekka}, and so we cannot use this line of reasoning to guarantee the existence of limits of functions in $\dot N^{1,p}(X)$ along infinite curves in this case.  In fact, we show in Example~\ref{example-homogeneous} that there are functions in $\dot N^{1,p}(X)$, for both $p=Q$ and $p>Q$, which fail to have a limit at infinity along any curve $\gamma\in\Gamma^{+\infty}$.
For this reason, we consider the following question for the case $p\ge Q$: if a function in $\dot N^{1,p}(X)$ has a limit along some infinite curve, must it have limits along many such curves, and are these limits unique?

When $p>Q$, we see that uniqueness is not guaranteed, even when limits exist along a large collection of infinite curves.  Indeed, in Example~\ref{example-p>n}, we construct a function in $\dot W^{1,p}(\R^n)$ with $p>n$ which has a limit at infinity along every radial curve, but these limits do not coincide.  For this reason, the critical case $p=Q$ is one of the primary focuses of this paper.  It turns out that when $p=Q$, the limits at infinity of Sobolev functions are more well-behaved: in Lemma~\ref{thm1.2-0505}, we show that if a function $u\in\dot N^{1,Q}(X)$ has a limit along some infinite curve, then it obtains the same limit at infinity everywhere outside of a $Q$-thin set. 
Furthermore, we show that a function in $\dot N^{1,Q}(X)$ cannot have more than one such limit at infinity, in contrast to the case when $p>Q$.

A natural question is whether the existence of limits at infinity outside of a $Q$-thin set implies the existence of limits along infinite curves.  To answer this question, we introduce the notion of a {\it thick} set, defined in Section~\ref{sec-thin}.  Roughly speaking, thick sets have Hausdorff content uniformly bounded away from zero in sufficiently large dyadic annuli.  One of the main ideas of this paper, shown in  Lemmas~\ref{lem5.1-2308}, \ref{lem5.2-2308}, and \ref{lem5.3-0709}, is that if $u\in\dot N^{1,Q}(X)$ has a limit at infinity along some thick set, then there exists a large family of infinite curves along which $u$ attains the same limit.  We recall that any family of infinite curves in this setting will have $Q$-modulus zero, and so $Q$-modulus gives us no information on the size of this family.  However, this family of curves has large $Q$-modulus in each of the considered annuli.  As such, we show that inside any sufficiently large ball, the curves in this family have length proportional to the radius of the ball, and the union of their trajectories inside the ball has measure proportional to the measure of the ball.  That is to say, this family of curves fills up a significant portion of each large ball, but each curve is not too long. In Lemma~\ref{lem4.5-0703}, we show that if $E\subset X$ is a $Q$-thin set, then $X\setminus E$ is a thick set.  This completes the picture for the following main result:

\begin{theorem}\label{thm1.3-2208}
Let  $1<Q<+\infty$. Suppose that $(X,d,\mu)$ is a complete,  unbounded metric measure space with metric $d$ and Ahlfors $Q$-regular measure $\mu$ supporting a $Q$-Poincar\'e inequality. Let $c\in[-\infty,\infty]$. 

Then for every function $u\in\dot N^{1,Q}(X)$, the following are equivalent:
\begin{enumerate}
 \item[{\rm I.}] There exists an infinite curve $\gamma\in\Gamma^{+\infty}$ such that $\lim_{t\to+\infty}u(\gamma(t))=c$.
 \item[{\rm II.}] There is a thin set $E$ such that $\lim_{E\not\ni x\to+\infty}u(x)=c$.
 \item[{\rm III.}]   There is a thick set $F$ such that $\lim_{F\ni x\to+\infty}u(x)=c$.
 \item[{\rm IV.}] There is a family $\Gamma$ of infinite curves such that $\lim_{t\to+\infty}u(\gamma(t))=c$  for each $\gamma\in\Gamma$, satisfying the following for sufficiently large $R>0$: 
\begin{enumerate}
    \item [{\rm 1.}] $ \mu(B(O,R)\cap|\Gamma|)\simeq \mu(A_R\cap |\Gamma|)\simeq\mu(B(O,R))$,
     \item [{\rm 2.}] $\ell(\gamma\cap B(O,R))\simeq \ell(\gamma\cap A_R)\simeq R$ for each $\gamma\in\Gamma$,
\end{enumerate}
where $A_R:=B(O,R)\setminus B(O,R/2)$. 
\end{enumerate}
 Furthermore, there is at most one $c\in[-\infty,\infty]$ such that any of the above hold.  Here, $|\Gamma|$ denotes the union of the trajectories of the curves in the family $\Gamma$.
\end{theorem}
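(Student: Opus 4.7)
The plan is to close the cycle of implications $\mathrm{I} \Rightarrow \mathrm{II} \Rightarrow \mathrm{III} \Rightarrow \mathrm{IV} \Rightarrow \mathrm{I}$, since each arrow has already been set up by a lemma announced in the introduction; the present proof amounts to organizing those inputs and recording the uniqueness of $c$.

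For $\mathrm{I} \Rightarrow \mathrm{II}$ I would invoke Lemma~\ref{thm1.2-0505} directly: whenever $u \in \dot N^{1,Q}(X)$ has a limit $c$ along some infinite curve, it attains $c$ outside a $Q$-thin set. The same lemma also furnishes the uniqueness of $c$, since it forbids a single function in $\dot N^{1,Q}(X)$ from admitting two distinct limits at infinity off thin sets. Next, $\mathrm{II} \Rightarrow \mathrm{III}$ follows at once from Lemma~\ref{lem4.5-0703}: if $E$ is a $Q$-thin set witnessing $\mathrm{II}$, then $F := X \setminus E$ is thick and obviously $\lim_{F \ni x \to +\infty} u(x) = c$.

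For $\mathrm{III} \Rightarrow \mathrm{IV}$ I would appeal to Lemmas~\ref{lem5.2-2308} and~\ref{lem5.3-0709}, which, starting from a limit along a thick set, produce the family $\Gamma$ of infinite curves together with the quantitative length and volume bounds (1) and (2) in every sufficiently large dyadic annulus $A_R$. The final implication $\mathrm{IV} \Rightarrow \mathrm{I}$ is immediate: select any single $\gamma \in \Gamma$, which visibly witnesses $\mathrm{I}$.

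The main obstacle, which sits inside the supporting lemmas rather than in this theorem, is the construction of $\Gamma$ in $\mathrm{III} \Rightarrow \mathrm{IV}$: because the total $Q$-modulus of $\Gamma^{+\infty}$ vanishes in the critical case $p = Q$, one cannot extract the desired curves by any global modulus argument. One must instead work annulus-by-annulus, exploiting the uniform lower bound on Hausdorff content guaranteed by thickness of $F$, and then stitch the annular pieces together into infinite curves carrying the prescribed length and measure estimates. At the level of the present theorem the only extra point demanding attention is that each implication, as well as uniqueness, is valid uniformly for $c \in [-\infty, +\infty]$; if an underlying lemma is stated only for finite $c$, one reduces the extended case to the finite one by composing $u$ with a bounded order-preserving homeomorphism of $\overline{\mathbb{R}}$ (this composition lies again in $\dot N^{1,Q}(X)$ as the map is $1$-Lipschitz), since the existence of a limit along a curve or off/on a prescribed set is preserved under such a composition.
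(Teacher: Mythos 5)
Your chain of implications is exactly the one the paper uses: I$\Rightarrow$II by Lemma~\ref{thm1.2-0505}, II$\Rightarrow$III by Lemma~\ref{lem4.5-0703}, III$\Rightarrow$IV by Lemmas~\ref{lem5.2-2308} and~\ref{lem5.3-0709}, and IV$\Rightarrow$I trivially; your closing remark about reducing $c=\pm\infty$ to the finite case is unnecessary here, since the supporting lemmas are already stated and proved for $c\in[-\infty,+\infty]$.

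The one genuine gap is the uniqueness of $c$. You assert that Lemma~\ref{thm1.2-0505} ``forbids a single function from admitting two distinct limits at infinity off thin sets,'' but that lemma only produces a thin set from a limit along a curve; it says nothing about two competing limits. Uniqueness requires a separate (short) argument: if $E$ and $E'$ are thin sets with $\lim_{E\not\ni x\to+\infty}u(x)=c$ and $\lim_{E'\not\ni x\to+\infty}u(x)=c'$, then $E\cup E'$ is thin by the subadditivity of Hausdorff content (Lemma~\ref{prop2.5-0605}, part~4), and by Ahlfors regularity a thin set cannot contain an entire dyadic annulus for infinitely many scales (Lemma~\ref{prop2.5-0605}, part~1), so $(X\setminus E)\cap(X\setminus E')\cap A_j\ne\varnothing$ for all large $j$, forcing $c=c'$. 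Without this step the ``at most one $c$'' clause of the theorem is unproved.
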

  Notice that we allow for the possibility that $c\in\{-\infty,\infty\}$, since functions in $\dot N^{1,Q}(X)$ may have infinite limits at infinity.  For example, consider the function $u(x)=\log(\log(e^e+|x|))$, which belongs to $\dot W^{1,n}(\R^n)$ when $n\ge 2$.

 It is unclear to what extent we can weaken the definition of a thick set and still obtain limits along infinite curves in the above result.  We define the notion of an {\it almost thick} set in Section~\ref{sec-thin} by replacing the uniform lower bound $\delta$ in Definition~\ref{def:ThickSets}, with an annulus-dependent lower bound $\delta_j$ in Definition \ref{def:almost-thick}.
 However in Example~\ref{ex:EasyAlmostThick}, we show that uniqueness of limits along almost thick sets fails in every complete Ahlfors $Q$-regular metric measure space supporting a $Q$-Poincar\'e inequality.  Furthermore, in Example~\ref{example5.4-0709}, we show that given any complete Ahlfors $Q$-regular measure metric space supporting a $Q$-Poincar\'e inequality, there exists an almost thick set $F$ and a function $u\in\dot N^{1,Q}(X)$ such that $u$ has a limit along $F$ but has no limit along any infinite curve.  Hence, we cannot replace the thick set $F$ in the above result with an almost thick set.  In both of these examples, however, the annulus-dependent lower bound $\delta_j$ from Definition~\ref{almost-thick} decays very rapidly.  We pose the following open question:
 \begin{question}
Is there a bound on the rate at which $\delta_j\to 0$, as $j\to+\infty$, so that if $u\in\dot N^{1,Q}(X)$ has a limit at infinity along an almost thick set with this decay rate, then $u$ also attains the same limit along some infinite curve?  
 \end{question}

Theorem~\ref{thm1.3-2208} shows that if a function $u\in\dot N^{1,Q}(X)$ has a limit along some infinite curve, then it has a limit along a family of curves which are not too long in any large ball, and such that the family of curves occupies a lot of volume.  While this family of curves is nice in this sense, we 
do not have a way to parameterize such a family, and we do not know how curves in this family may overlap. However, we would like to understand the behaviour at infinity of $\dot N^{1,Q}(X)$ functions under the assumption that the space possesses some preferred parameterized family of infinite curves.  To do so, we now assume that our space satisfies the  condition of a {\it weak polar coordinate system}, as introduced in \cite{KN22}.{
This system, defined in Section \ref{polar}, consists of \emph{a family of infinite curves}
\[
\Gamma^{\mathcal O}(\mathbb S)=\{\gamma^{\mathcal O}_\xi\in\Gamma^{+\infty}: \gamma_\xi^{\mathcal O}(0)=\mathcal O, \xi\in\mathbb S\}
\]
starting from a fixed point $\mathcal{O}$ indexed by an \emph{abstract set}{
$\mathbb{S}$ of indices}, where the parameter set $\mathbb{S}$ is equipped with a Radon probability measure $\sigma$ and a metric $d_{\mathbb S}$ for which the ``polar coordinate'' integration of every integrable function is controlled by integration with respect to $\mu$. 
We refer to curves in this family as {\it radial curves}. For $\mathbb A\subset\mathbb S$, we denote by $\Gamma^{\mathcal O}(\mathbb A)$ the subfamily of all infinite curves $\gamma_\xi^{\mathcal O}$ starting from $\mathcal O$ with respect to $\xi\in\mathbb A$.
 In \cite{KN22,EKN22}, it was shown that functions in $\dot N^{1,p}(X)$ have limits along radial curves when $p$ is strictly smaller than the doubling dimension of our space.  Here we are considering the critical case when  $X$ is Ahlfors $Q$-regular and $p=Q$, and so $p$ equals the doubling dimension of our space. 

A collection of infinite curves is said to be  \emph{pairwise disjoint (at infinity)}  if there is a ball $B$ such that any two distinct curves in the collection are disjoint outside $B$.
A subfamily $\Gamma^{\mathcal O}(\mathbb S')$  of pairwise disjoint infinite curves at infinity with $\sigma(\mathbb S')>0$, where $\mathbb S'\subset\mathbb S$, is said to admit a \emph{dyadic Lipschitz projection (at infinity)}  if  there are finite constants $C>0, r_0>0$ such that   there is a  mapping $p: (\Gamma^{\mathcal O}(\mathbb S'), d)\to (\mathbb S', d_{\mathbb S})$, from $x\in\gamma_\xi^{\mathcal O}\in \Gamma^{\mathcal O}(\mathbb S')$ to the corresponding parameter $\xi\in\mathbb S'$,  being  $C/r$-Lipschitz on each  $\Gamma^{\mathcal O}(\mathbb S')\cap B(\mathcal O,r)\setminus B(\mathcal O,r/2)$ with $r>r_0$. 

}
{
Under the assumptions that $X$ has a subfamily admitting a dyadic Lipschitz projection,}
we obtain the following improvement of Theorem~\ref{thm1.3-2208}, characterizing limits of $Q$-Sobolev functions along such a preferred family of curves.  In what follows, we denote the absolute continuity of $\sigma$ with respect to $\Ha^\alpha_{d_{\mathbb S}}$ by $\sigma\ll\Ha^\alpha_{d_\mathbb S}${
 where $\mathcal H^\alpha_{d_{\mathbb S}}$ is the $\alpha$-Hausdorff measure defined on the metric space $(\mathbb S, d_{\mathbb S})$}.

\begin{theorem}\label{thm4-2705}

{
Under the assumptions of Theorem \ref{thm1.3-2208}, we additionally assume that $X$ has a weak polar coordinate system $(\Gamma^{\mathcal O}(\mathbb S), \sigma, d_{\mathbb S})$ such that it has a subfamily $(\Gamma^{\mathcal O}(\mathbb S'), \sigma, d_{\mathbb S})$, where $\mathbb S'\subset\mathbb S$, of pairwise disjoint infinite curves with $\sigma(\mathbb S')>0$ admitting  a dyadic Lipschitz projection. }
Let $c\in[-\infty,\infty]$. Suppose that there exists some $\alpha_0\in (0,Q)$ such that{
$\sigma\ll\Ha^{\alpha_0}_{d_{\mathbb S}}$}.  Then for every function $u\in \dot N^{1,Q}(X)$, the following are equivalent:
\begin{enumerate}
    \item[1.] There exists an infinite curve $\gamma\in\Gamma^{+\infty}$ such that $\lim_{t\to{+\infty}}u(\gamma(t))=c.$
    \item[2.]  There exists a thin set $E$  such that  $\lim_{E\not\ni x\to{+\infty}}u(x)=c.$
    \item[3.]  There exists a thick set $F$  such that 
	$\lim_{F\ni x\to+\infty}u(x)=c.$
    \item[4.] For all $0<\alpha<Q$, we have that $\lim_{t\to{+\infty}} u(\gamma_\xi^{\mathcal O}(t))=c$ 
			for{
			$\mathcal H^{\alpha}_{d_{\mathbb S}}$}-a.e.{
			$\xi\in\mathbb S'$}. 
	\item[5.]$\lim_{t\to{+\infty}} u(\gamma_\xi^{\mathcal O}(t))=c$ for {
	$\Ha^{\alpha_0}_{d_{\mathbb S}}
$}-a.e.{
$\xi\in\mathbb S'$}.		
    \item[6.]  $\lim_{t\to{+\infty}} u(\gamma_\xi^{\mathcal O}(t))=c$ for $\sigma
$-a.e. {
$\xi\in\mathbb S'$}. 
\end{enumerate}
Furthermore, there is at most one $c\in[-\infty,\infty]$ such that any of the above hold.
\end{theorem}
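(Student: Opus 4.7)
By Theorem~\ref{thm1.3-2208}, conditions (1), (2), and (3) are already equivalent and pin down the same value of $c$ uniquely, so the task reduces to incorporating the radial-curve conditions (4), (5), and (6) into this equivalence. The plan is to close the cycle
\[
(2) \Longrightarrow (4) \Longrightarrow (5) \Longrightarrow (6) \Longrightarrow (1),
\]
after which uniqueness of $c$ is inherited from Theorem~\ref{thm1.3-2208}.

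The implications $(4) \Rightarrow (5)$ and $(5) \Rightarrow (6)$ are immediate: specializing the conclusion of (4) to $\alpha = \alpha_0 \in (0,Q)$ yields (5), and then the absolute continuity $\sigma \ll \mathcal H^{\alpha_0}$ upgrades a $\mathcal H^{\alpha_0}$-null set of bad parameters to a $\sigma$-null one. For $(6) \Rightarrow (1)$, since $\sigma$ is a probability measure on $\mathbb S$, a $\sigma$-a.e.\ statement cannot be vacuous, so at least one radial curve $\gamma_\xi^{\mathcal O}\in\Gamma^{+\infty}$ realizes the limit.

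The substantive step is $(2) \Rightarrow (4)$. Starting from a thin set $E$ with $\lim_{E\not\ni x\to+\infty} u(x) = c$ provided by (2), fix $\alpha \in (0,Q)$ and work with the dyadic annuli $A_{2^j} := B(\mathcal O, 2^j)\setminus B(\mathcal O, 2^{j-1})$. The dyadic Lipschitz projection $\pi\colon x\mapsto\xi$ (sending each point $x\in\gamma_\xi^{\mathcal O}$ to its parameter) has Lipschitz constant $\lesssim 2^{-j}$ on $A_{2^j}$ for all sufficiently large $j$, so
\[
\mathcal H^{\alpha}\bigl(\pi(E\cap A_{2^j})\bigr) \;\lesssim\; 2^{-j\alpha}\,\mathcal H^{\alpha}\bigl(E\cap A_{2^j}\bigr).
\]
The quantitative decay of the Hausdorff content of $E\cap A_{2^j}$ built into the definition of a thin set in Section~\ref{sec-thin} makes the right-hand side summable in $j$ (if necessary after slightly decreasing $\alpha$, which is permissible since the conclusion of (4) is required for every $\alpha<Q$). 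A Borel--Cantelli argument on the sequence $\pi(E\cap A_{2^j})$ then guarantees that $\mathcal H^\alpha$-a.e.\ $\xi\in\mathbb S$ lies in $\pi(E\cap A_{2^j})$ for only finitely many $j$, i.e.\ the radial curve $\gamma_\xi^{\mathcal O}$ eventually avoids $E$. Since each radial curve is infinite and starts at $\mathcal O$, it must enter every $A_{2^j}$, so (2) forces $\lim_{t\to+\infty} u(\gamma_\xi^{\mathcal O}(t)) = c$ for these $\xi$, which is exactly (4).

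The main obstacle lies precisely in $(2) \Rightarrow (4)$: one has to translate a geometric smallness condition on $E\subset X$ into negligibility of its projection in $\mathbb S$ simultaneously for every $\alpha\in(0,Q)$, and then turn that negligibility into an ``eventually avoids'' statement. Both the quantitative decay encoded in the definition of thinness and the scale-sensitive $C/r$ control on $\pi$ are essential here; losing either collapses the Borel--Cantelli step. A minor technical point is to pick a representative of $u$ whose pointwise limit outside $E$ transfers cleanly to $\mathcal H^\alpha$-a.e.\ radial curve, but this is standard under Ahlfors regularity and the upper gradient framework.
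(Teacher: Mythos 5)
Your proposal is correct and follows essentially the same route as the paper: the equivalence of (1)--(3) is imported from Theorem~\ref{thm1.3-2208}, the cycle is closed via $(2)\Rightarrow(4)\Rightarrow(5)\Rightarrow(6)\Rightarrow(1)$, and the key step $(2)\Rightarrow(4)$ is handled exactly as in the paper by pushing the thin set forward under the $C/r$-Lipschitz projection and using subadditivity of Hausdorff content on the tail sums (the paper packages this as claim 3 of Lemma~\ref{prop2.5-0605} together with Lemma~\ref{lem2.1-0805}). The only cosmetic difference is that you work with $\mathcal H^\alpha$ where the paper first controls the content $\mathcal H^\alpha_{+\infty}$ and then passes to the measure, and your "slight decrease of $\alpha$" is in fact unnecessary since the thin-set definition already gives convergence of the relevant series for every $\alpha\in(0,Q)$.
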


 

{
If $\mathbb{S}\subset X$,} the Hausdorff measure on $\mathbb{S}$ is defined with respect to the metric $d$ inherited from $X$.  The assumption that there exists $\alpha_0\in (0,Q)$ for which $\sigma\ll\Ha^{\alpha_0}$ prevents singletons from having positive $\sigma$-measure.  We also note that not all $\alpha\in (0,Q)$ satisfy the condition $\sigma\ll\Ha^{\alpha}$.  In $\R^n$, for example, the natural polar coordinate system is such that $\sigma\simeq\Ha^{n-1}$, and so $\alpha\in (n-1,n)$ will fail this condition. 

Recall that functions in $\dot N^{1,Q}(X)$ may have infinite limits at infinity, as mentioned above. 
 However, in the following result, we show that functions in $\dot N^{1,Q}(X)$ have sub-logarithmic growth at infinity outside a $Q$-thin set under the assumption of Theorem \ref{thm1.3-2208}.  Thus, we obtain a bound on how quickly such functions may approach infinity.
 
 \begin{theorem}\label{thm1-2705}
 Let $1<Q<{+\infty}$. 
Suppose that $(X,d,\mu)$ is a complete, unbounded metric measure space with metric $d$ and $Q$-Ahlfors regular measure $\mu$ supporting a $Q$-Poincar\'e inequality. Then for every function $u\in \dot N^{1,Q}(X),$ there exists a $Q$-thin set $E$ such that
 	\begin{equation}
		\label{eq1.1-2705}
		\lim_{E\not\ni x\to{+\infty}} \left | \frac{u(x)}{\log^{\frac{Q-1}{Q}}(|x|)} \right|=0.
	\end{equation}

Moreover, we have that 
	\begin{equation}\label{equ1.2-2805}
		\lim_{x\to{+\infty}} \left | \frac{u_{B(x,1)}}{\log^{\frac{Q-1}{Q}}(|x|)}\right|=0 \text{\rm \ \ for every function $u\in \dot  N^{1,Q}(X)$ }
	\end{equation}
where
 $u_{B(x,1)}:=\dashint_{B(x,1)}ud\mu:=\frac{1}{\mu(B(x,1))}\int_{B(x,1)}ud\mu$.
 \end{theorem}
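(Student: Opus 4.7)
The plan is to first derive a Trudinger--Moser-type oscillation estimate,
\[
|u_{B(x,r)} - u_{B(x,R)}| \le C (\log(R/r))^{(Q-1)/Q} \|g_u\|_{L^Q(\lambda B(x,R))},
\]
for $u \in \dot N^{1,Q}(X)$, $x \in X$ and $0 < r < R$. This follows from the capacity estimate $\mathrm{cap}_Q(B(x,r), B(x,R)) \simeq \log^{1-Q}(R/r)$, valid in Ahlfors $Q$-regular spaces supporting a $Q$-Poincar\'e inequality: plugging a renormalised truncation of $u$ into the variational definition of $\mathrm{cap}_Q$ gives $|u_{B(x,r)} - u_{B(x,R)}|^{Q} \le C \|g_u\|_{L^Q(\lambda B(x,R))}^Q / \mathrm{cap}_Q$. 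Applied with $r=1$ and $R=|x|$ this already produces $|u_{B(x,1)} - u_{B(x,|x|)}| = O(\log^{(Q-1)/Q}(|x|))$, which has the correct order but falls short of the stated little-$o$.

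To upgrade $O$ to $o$, I would run a truncation argument. For $M>0$ set $u_M := \max(-M, \min(M, u))$ and $v_M := u - u_M$; the standard truncation calculus for upper gradients gives that $u_M$ is $M$-bounded and that an upper gradient of $v_M$ may be taken to vanish on $\{|u|\le M\}$. Since $g_u \in L^Q(X)$ and $u$ is finite $\mu$-almost everywhere (choose the quasi-continuous representative), dominated convergence yields $\|g_{v_M}\|_{L^Q(X)} \to 0$ as $M \to +\infty$. Applying the oscillation estimate to $v_M$ with centre $O$ and radii $1$ and $R = |x|$, and estimating the $u_M$-part crudely by $2M$, gives
\[
|u_{B(O,|x|)} - u_{B(O,1)}| \le 2M + C\,\log^{(Q-1)/Q}(|x|)\,\|g_{v_M}\|_{L^Q(X)}.
\]
Dividing by $\log^{(Q-1)/Q}(|x|)$, letting $|x|\to +\infty$ and then $M\to +\infty$ yields $|u_{B(O,R)}| = o(\log^{(Q-1)/Q}(R))$.

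To transfer this bound from $u_{B(O,|x|)}$ to $u_{B(x,1)}$ I would form a short chain. Re-running the truncation/capacity argument with centre $x$ and radii $1$ and $2|x|$ gives $|u_{B(x,1)} - u_{B(x,2|x|)}| = o(\log^{(Q-1)/Q}(|x|))$. The balls $B(x,2|x|)$ and $B(O,|x|)$ both lie inside the comparable ball $B(x,3|x|)$, so one application of the $Q$-Poincar\'e inequality there, combined with Ahlfors regularity, gives $|u_{B(x,2|x|)} - u_{B(O,|x|)}| = O(\|g_u\|_{L^Q(X)})$, which is $O(1)$. Summing these three estimates proves \eqref{equ1.2-2805}.

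For the pointwise statement \eqref{eq1.1-2705} I would first show $|u(x) - u_{B(x,1)}| = o(\log^{(Q-1)/Q}(|x|))$ outside a $Q$-thin set $E$: telescoping along shrinking balls and applying the $Q$-Poincar\'e inequality bounds this oscillation by a fractional-maximal-type potential of $g_u$, and standard weak-type Hausdorff-content estimates identify the set of $x$ in each dyadic annulus $A_k := B(O,2^{k+1})\setminus B(O,2^k)$ on which the potential exceeds $\epsilon_k \log^{(Q-1)/Q}(2^k)$, for a suitable sequence $\epsilon_k \searrow 0$, as a set of small enough Hausdorff content; the union over $k$ then satisfies the definition of a $Q$-thin set from Section~\ref{sec-thin}. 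Combining with \eqref{equ1.2-2805} delivers \eqref{eq1.1-2705}. The principal obstacle throughout is turning the easy $O$-bound into the claimed little-$o$: this is exactly where the decomposition $u=u_M+v_M$ and the convergence $\|g_{v_M}\|_{L^Q(X)}\to 0$ (which relies on the $\mu$-a.e.\ finiteness of $u$) are indispensable.
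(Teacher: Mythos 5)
Your architecture is sound and, for the crucial step of upgrading the easy $O(\log^{\frac{Q-1}{Q}}|x|)$ bound to the claimed $o(\log^{\frac{Q-1}{Q}}|x|)$, you take a genuinely different route from the paper. The paper proves a two-point chaining estimate (Lemma~\ref{lem3.4-2805}) in which the right-hand side involves only $\|\rho_u\|_{L^Q(X\setminus B(O,|x_0|/(2\overline{c_2}c_2)))}$, i.e.\ the \emph{tail} of the gradient norm, and then runs a proof by contradiction; the smallness comes from localizing the gradient to the exterior of a large ball. You instead keep the full $L^Q$ norm but split $u=u_M+v_M$, absorb the bounded part into the $2M$ error, and get smallness from $\|g_{v_M}\|_{L^Q(X)}\to0$. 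Your mechanism is correct (the truncation calculus and dominated convergence are fine, and $u_{B(O,1)}$ is finite since $u$ is locally integrable), and it lets you chain only through concentric balls about $O$ and about $x$ plus one Poincar\'e application on $B(x,3|x|)$, avoiding the paper's two-point annular chaining and its passage to a bi-Lipschitz geodesic metric. The reduction of \eqref{eq1.1-2705} to \eqref{equ1.2-2805} via a thin set controlling $|u(x)-u_{B(x,1)}|$ matches the paper's Lemmas~\ref{thm1-0705} and~\ref{thm3.3-2605} (fractional maximal function plus Theorem~\ref{fractional-maximal-theorem}), so that part is fine.

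The genuine soft spot is your justification of the oscillation estimate $|u_{B(x,r)}-u_{B(x,R)}|\le C(\log(R/r))^{\frac{Q-1}{Q}}\|g_u\|_{L^Q(\lambda B(x,R))}$, on which everything else rests. The capacity argument you sketch does not go through: a renormalized truncation of $u$ is not admissible for the condenser $(B(x,r),X\setminus B(x,R))$, since $u$ need not take the prescribed values on $B(x,r)$ or off $B(x,R)$, so the lower bound $\mathrm{Cap}_Q\lesssim |u_{B(x,r)}-u_{B(x,R)}|^{-Q}\|g_u\|^Q$ is not immediate. The obvious fallback, telescoping over the \emph{nested} balls $B(x,2^{-i}R)$ and applying H\"older, fails to give the exponent $\frac{Q-1}{Q}$: the nested balls overlap $N\simeq\log(R/r)$ times, so one only gets $|u_{B(x,r)}-u_{B(x,R)}|\lesssim\log(R/r)\,\|g_u\|_{L^Q}$ this way (this is precisely why the paper's Lemma~\ref{thm3.3-2605} has to route the sum through a fractional maximal function and accept an exceptional thin set, and why Lemma~\ref{lem3.4-2805} chains through annuli $A_i$ of \emph{bounded overlap}). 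The estimate you want is nevertheless true in this setting --- it follows from Trudinger's exponential integrability theorem together with Ahlfors regularity (Jensen's inequality applied to $\dashint_{B(x,r)}|u-u_{B(x,R)}|$), or from a chaining argument using boundedly overlapping annuli as in Lemma~\ref{lem3.4-2805} --- but you need to invoke one of these; as written, the step from the capacity asymptotics to the oscillation bound is a gap.
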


When $p>Q$, we modify the arguments used in the proof of the above theorem to obtain the following improved growth bounds, under the additional assumption of the annular chain property, see Section~\ref{sec-dyadic}:
\begin{theorem}\label{thm:p>Q Growth}
Let $1<Q<+\infty$, and let $Q<p<+\infty$.  Suppose that $(X,d,\mu)$ is a complete, unbounded metric measure space with metric $d$ and Ahlfors $Q$-regular measure $\mu$ supporting a $p$-Poincar\'e inequality, and suppose that $X$ satisfies the annular chain property, as in Definition~\ref{dyadic}.  Then for every function $u\in\dot N^{1,p}(X)$, we have that 
\begin{equation}\label{eq:p>Q Growth 1}
\lim_{x\to+\infty}\frac{|u(x)|}{|x|^{{1-Q/p}}}=0,
\end{equation}
and also that 
\begin{equation}\label{eq:p>Q Growth 2}
\lim_{x\to+\infty}\frac{|u_{B(x,1)}|}{|x|^{1-Q/p}}=0.
\end{equation}  
\end{theorem}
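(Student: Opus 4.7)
The plan is to adapt the chaining argument behind the $p = Q$ case of Theorem~\ref{thm1-2705}, replacing the logarithmic factor that arises at the critical exponent with the power factor $R^{1-Q/p}$ given by Morrey--Sobolev scaling. The key local ingredient is the pointwise Morrey-type estimate
\begin{equation}\label{eq:MorreyAux}
|u(z) - u(w)| \leq C\, d(z,w)^{1-Q/p}\left(\int_{B(z,\lambda d(z,w))}g^p\,d\mu\right)^{\!1/p},
\end{equation}
which holds for every upper gradient $g$ of $u$ and all $z, w \in X$, with $\lambda \geq 1$ depending only on the Poincar\'e and doubling constants; this is a standard consequence of Ahlfors $Q$-regularity together with the $p$-Poincar\'e inequality when $p > Q$. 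The feature we exploit is that $1-Q/p > 0$, so that dyadic geometric series in $2^{-k(1-Q/p)}$ converge.

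Given $\varepsilon > 0$, the first step is to choose $R_0$ so large that $\|g\|_{L^p(X \setminus B(O, R_0/2))} < \varepsilon$. For $x$ with $R := |x| \gg R_0$, the annular chain property produces a sequence of points $z_0, z_1, \dots, z_K = x$ with $|z_k| \simeq 2^k R_0$ and $K \simeq \log_2(R/R_0)$, where each consecutive pair $z_k, z_{k+1}$ is joined by a chain of balls of radius $\simeq 2^k R_0 / 10$, of uniformly bounded cardinality, whose Morrey-enlargements remain inside a fixed dilate of the annulus $A_{2^{k+1}R_0}$, in particular avoiding $B(O, R_0/2)$. Telescoping \eqref{eq:MorreyAux} along each such chain yields
\[
|u(z_{k+1}) - u(z_k)| \leq C\,(2^k R_0)^{1-Q/p}\,\|g\|_{L^p(C A_{2^{k+1} R_0})}.
\]
Summing over $k = 0, 1, \dots, K-1$ and applying H\"older's inequality in the discrete sense with exponents $p$ and $p/(p-1)$, together with the bounded overlap of the fattened annuli, one arrives at
\[
|u(x) - u(z_0)| \leq C\,R^{1-Q/p}\Bigl(\sum_{k=0}^{K-1} \|g\|_{L^p(C A_{2^{k+1}R_0})}^p\Bigr)^{\!1/p} \leq C\,R^{1-Q/p}\,\|g\|_{L^p(X \setminus B(O, R_0/2))} \leq C\varepsilon\,R^{1-Q/p}.
\]
Dividing by $R^{1-Q/p}$, observing that $|u(z_0)|$ is a constant depending only on $R_0$, and letting $R \to +\infty$ gives $\limsup_{|x|\to +\infty} |u(x)|/|x|^{1-Q/p} \leq C\varepsilon$, whence \eqref{eq:p>Q Growth 1} follows on letting $\varepsilon \to 0^+$.

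The averaged version \eqref{eq:p>Q Growth 2} is a direct corollary: combining \eqref{eq:p>Q Growth 1} with the elementary bound $|u_{B(x,1)} - u(x)| \leq C\,\|g\|_{L^p(B(x,\lambda))}$, which follows either from \eqref{eq:MorreyAux} or from a standard Poincar\'e argument, and noting that this tends to $0$ as $|x| \to +\infty$ since $g \in L^p(X)$, one obtains the stated limit. The principal technical difficulty is realizing the dyadic chain so that the number of balls at each scale is bounded independently of the scale \emph{and} so that each ball, together with its Morrey-enlargement in \eqref{eq:MorreyAux}, is contained in a fixed dilate of the corresponding annulus; the annular chain hypothesis is precisely what makes this possible, and it enables the aggregation of the Morrey bounds into a single tail integral $\|g\|_{L^p(X \setminus B(O,R_0/2))}$ that can be made arbitrarily small by $p$-integrability of $g$.
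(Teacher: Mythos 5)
Your proposal is correct and takes essentially the same route as the paper: the substance in both cases is a dyadic annular chaining estimate built from the Morrey bound of Theorem~\ref{thm:Morrey} on each chain ball, aggregated by discrete H\"older together with the convergent geometric series in $2^{k(1-Q/p)}$ (the point where $p>Q$ improves on the $p=Q$ case) and the bounded overlap of the fattened annuli, so that everything is controlled by the small tail $\|\rho_u\|_{L^p(X\setminus B(O,R_0/c))}$. The only organizational difference is that you run this as a direct $\varepsilon$-argument on pointwise values and then deduce the averaged statement \eqref{eq:p>Q Growth 2}, whereas the paper first reduces \eqref{eq:p>Q Growth 1} to \eqref{eq:p>Q Growth 2} via Theorem~\ref{thm:Morrey} and then proves the averaged bound by contradiction using the chaining inequality isolated as Lemma~\ref{lem:p>Q LogLemma}.
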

Notice that when $p>Q$, the annular chain property is not a consequence of the assumptions of completeness, Ahlfors $Q$-regularity, and $p$-Poincar\'e inequality, as in the case $p\le Q$ \cite[Theorem~3.3]{K07}.  Because of this, we include it as an additional assumption in the statement of the above theorem.

Until this point, our results have concerned only the homogeneous space $\dot N^{1,p}(X)$.  We conclude by turning our attention to the inhomogeneous space $N^{1,p}(X)$.  When $1\le p<Q$, we use the results from \cite{KN22} to show that if $u\in N^{1,p}(X)$, then $\lim_{t\to+\infty}u(\gamma(t))=0$ for $p$-a.e $\gamma\in\Gamma^{+\infty}$.
By Theorem~\ref{thm:p<Q Thin} above, it then follows that $u$ attains the same limit at infinity outside of a $p$-thin set.  When $p>Q$ and $X$ is complete, we show in Proposition~\ref{thm2.2-07-03} that $\lim_{x\to+\infty} u(x)=0$ for every  $u\in N^{1,p}(X)$.  When $p=Q$, this is not the case: there exists a function $f\in N^{1,Q}(X)$ such that $\lim_{x\to+\infty}f(x)$ fails to exist. For example, consider the function $\sum_{j\in\mathbb N}\psi_j\in N^{1,Q}(X)$ constructed Example~\ref{example5.4-0709}.  However, in the case $p=Q$, we similarly show that functions in $N^{1,Q}(X)$ have limit zero outside of a $Q$-thin set.  We summarize these results with the following theorem:

\begin{theorem}\label{thm1.1-3105}
Let $1<Q<+\infty$ and $1\le p<+\infty$.   Suppose that $(X,d,\mu)$ is a complete unbounded metric measure space with metric $d$ and Ahlfors $Q$-regular measure $\mu$ supporting a $p$-Poincar\'e inequality.  If $Q<p<+\infty$, then for every $u\in N^{1,p}(X)$, we have that
\begin{equation}\label{eq:p>Q Inhom}
\lim_{x\to+\infty} u(x)=0.
\end{equation}
If $1\le p\le Q$, then for every $u\in N^{1,p}(X)$, there exists a $p$-thin set $E$ such that 
	\begin{equation}\label{eq1.4-0506}
		\lim_{E\not\ni x \to+\infty}u(x)=0.
	\end{equation}
 Furthermore, if there exists $c'\in\R$ and a $p$-thin set $E'\subset X$, such that $\lim_{E'\not\ni x\to+\infty}u(x)=c'$, then we have that $c'=0$. 
\end{theorem}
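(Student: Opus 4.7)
The plan is to split into the three regimes $p>Q$, $1\le p<Q$, and $p=Q$, each of which calls for a distinct argument, and then to handle the uniqueness statement by a common $L^p$-integrability obstruction. The case $p>Q$ is immediate from Proposition~\ref{thm2.2-07-03}, which already asserts \eqref{eq:p>Q Inhom}. For the subcritical case $1\le p<Q$, I would first apply Theorem~\ref{thm:p<Q Thin} to $u\in N^{1,p}(X)\subset\dot N^{1,p}(X)$ to extract some $c\in\R$ and a $p$-thin set $E$ with $\lim_{E\not\ni x\to+\infty}u(x)=c$. Identifying $c=0$ is then precisely the uniqueness argument described below applied once, and delivers \eqref{eq1.4-0506}.

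For the critical case $p=Q$, neither the curve result of \cite{KN22} nor Theorem~\ref{thm:p<Q Thin} is available, since $\Gamma^{+\infty}$ carries zero $Q$-modulus. Here my plan is a direct construction of the exceptional set by exhausting the $L^Q$-mass of $u$ and of a $Q$-integrable upper gradient $g$ in dyadic annuli $A_j=B(O,2^j)\setminus B(O,2^{j-1})$. Since $\int_X(|u|^Q+g^Q)\,d\mu<+\infty$, the annular tails $\tau_j:=\int_{2A_j}(|u|^Q+g^Q)\,d\mu$ tend to $0$. Choosing thresholds $\epsilon_j\to 0$ slowly enough that $\tau_j/\epsilon_j^Q\to 0$, I would set $E_j:=\{x\in A_j:|u(x)|>\epsilon_j\}$ and show, via a standard Markov/Poincar\'e bound on the super-level sets of a Sobolev function at scale $2^j$, that the relevant Hausdorff content of $E_j$ (as called for by the definition of $Q$-thin in Section~\ref{sec-thin}) is controlled by $\tau_j/\epsilon_j^Q$ and hence decays at the required rate. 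The set $E:=\bigcup_j E_j$ is then $Q$-thin by construction, and $|u|\le\epsilon_j$ on $A_j\setminus E$, which gives $\lim_{E\not\ni x\to+\infty}u(x)=0$.

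The uniqueness assertion is then handled uniformly for all $1\le p\le Q$ by a measure-theoretic contradiction: if $E'$ is $p$-thin and $\lim_{E'\not\ni x\to+\infty}u(x)=c'\ne 0$, then $|u|\ge |c'|/2$ on $(X\setminus E')\cap\{|x|\ge R_0\}$ for some large $R_0$. The $p$-thinness of $E'$ forces the complement $X\setminus E'$ to carry a definite fraction of $\mu(A_j)$ in each large dyadic annulus, because the Hausdorff content used to define thinness is of lower dimension than $Q$ and Ahlfors $Q$-regularity gives $\mu\simeq\mathcal H^Q$. Summing $\int_{A_j\setminus E'}|u|^p\,d\mu\gtrsim |c'|^p\,\mu(A_j)\simeq |c'|^p\,2^{jQ}$ over $j$ produces a divergent tail, contradicting $u\in L^p(X)$ and forcing $c'=0$. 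The main obstacle I expect is the $p=Q$ construction: calibrating the thresholds $\epsilon_j$ so that the super-level sets genuinely meet the quantitative Hausdorff-content decay built into the definition of a $Q$-thin set, and verifying cleanly from that definition the ``substantial complement measure'' property used in the uniqueness argument, are the two quantitative points that require real care, while everything else reduces to assembling already-stated results.
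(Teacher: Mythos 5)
Your proposal is correct in substance, and in two of the three regimes it takes a genuinely different (and in places cleaner) route than the paper. For $p>Q$ you and the paper coincide. For $1\le p<Q$ the paper goes back to the curve family of \cite{KN22} and rules out $c\ne 0$ by a modulus/length argument along infinite subcurves where $|u|\ge c/2$; you instead invoke Theorem~\ref{thm:p<Q Thin} directly and identify $c=0$ by the $L^p$-obstruction, which is shorter and avoids curves entirely. Your uniqueness argument is also different from the paper's: the paper shows $E\cup E'$ is $p$-thin and that large annuli minus $E\cup E'$ are nonempty, hence the two limits agree; you show directly that $\mu(E'\cap A_j)=o(\mu(A_j))$ (this follows from Lemma~\ref{lem:Measure-Content Bound}, i.e.\ Lemma~\ref{prop2.5-0605}(2), since $p\le Q$), so $\int_{A_j\setminus E'}|u|^p\gtrsim |c'|^p2^{jQ}$ sums to $+\infty$ unless $c'=0$. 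Both are valid; yours has the merit of being self-contained and of simultaneously identifying $c=0$ in the existence step. For $p=Q$ your super-level-set construction is essentially the paper's argument in disguise: the paper proves Lemma~\ref{thm1-0705} ($|u(x)-u_{B(x,|x|/2)}|\to 0$ off a thin set, via Lemma~\ref{lem3.2}) and then kills $u_{B(x,|x|/2)}$ using $u\in L^Q$ and $\mu(B(x,|x|/2))\to+\infty$; you compare $u(x)$ to $u_{B(x,2^j)}$ and kill the average the same way. Two calibration points you flagged do need attention. First, the ``standard Markov bound'' must be the fractional maximal function estimate of Lemma~\ref{lem3.2} (via Theorem~\ref{fractional-maximal-theorem}), not a plain Chebyshev bound on measure: a measure bound controls only $\mathcal H^Q$-type content, whereas the thin-set definition demands $\mathcal H^\alpha_{+\infty}$ for \emph{all} $0<\alpha<Q$; you also need $|u_{B(x,2^j)}|\le(\dashint_{B(x,2^j)}|u|^Q)^{1/Q}\lesssim\tau_j^{1/Q}2^{-j}\le\epsilon_j/2$ before $|u|>\epsilon_j$ implies $|u-u_{B(x,2^j)}|>\epsilon_j/2$. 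Second, the condition $\tau_j/\epsilon_j^Q\to 0$ is strictly weaker than what Definition~\ref{def-thin-0905} requires: thinness is a tail-\emph{sum} condition, so you need $\sum_j\tau_j\epsilon_j^{-Q}<+\infty$. This is exactly what Lemma~\ref{in-fact} supplies, so the repair is immediate, but as literally stated your threshold choice is too weak.
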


We note that the conclusion of Theorem~\ref{thm1.1-3105} does not hold for functions in the homogeneous space $\dot N^{1,p}(X)$ when $p\ge Q$, as shown in Example~\ref{example-homogeneous}.  

The outline of the paper is as follows: 
 \tableofcontents

Throughout this paper, we use the following conventions. We denote by $O$ the base point in the annular chain property and denote by $\mathcal O$ the coordinate 
point in the weak polar coordinate system \eqref{polar-0103}. These points do not need to be equal, but this does not cause any logical problems in what follows. 
The notation $A\lesssim B\ (A\gtrsim B)$ means that there is a constant $C>0$, depending only on the structural constants associated with the Ahlfors $Q$-regularity assumption or $p$-Poincar\'e inequality for example,  such that $A\leq C \cdot B\ (A\geq C\cdot B)$.  Likewise, by $A\approx B$ (or $A\simeq B$), we mean that $A\lesssim B$ and $A\gtrsim B$. 

For  each locally integrable function $f$ and for every measurable subset $A\subset X$ of strictly positive measure, we let $f_A:=\dashint_Afd\mu=\frac{1}{\mu(A)}\int_Afd\mu$.  We denote the ball centered at $x\in X$ of radius $r>0$ by $B(x,r)$, and for $\tau>0$, we set $\tau B(x,r):=B(x,\tau r)$.

\section{Preliminaries}
\subsection{Infinite curves}\label{sec-polar}\ 

Let $(X,d)$ be a metric space. A  \textit{curve} is a nonconstant continuous mapping from an interval $I\subseteq\mathbb R$ into $X$. The \textit{length} of a curve $\gamma$ is denoted by $\ell(\gamma)$. A curve $\gamma$ is said to be a  \textit{rectifiable curve} if its length is finite. Similarly, $\gamma$ is a   \textit{locally rectifiable curve} if its restriction to each compact subinterval of $I$ is rectifiable. Each rectifiable curve $\gamma$ will be parameterized by arc length and hence the  \textit{line integral} over $\gamma$ of a Borel function $f$ on $X$ is 
\[\int_\gamma fds =\int_0^{\ell(\gamma)}f(\gamma(t))dt.
\]
If $\gamma$ is locally rectifiable, then we set 
\[\int_{\gamma}fds=\sup\int_{\gamma'}fds
\]where the supremum is taken over all rectifiable subcurves $\gamma'$ of $\gamma$. Let $\gamma:[0,{+\infty})\to X$ be a locally rectifiable curve, parameterized by arc length. Then
\[\int_{\gamma}fds=\int_0^{+\infty} f(\gamma(t))dt.
\]
A locally rectifiable curve $\gamma$ is an  \textit{infinite curve} if $\gamma\setminus B\neq\emptyset$ for all balls $B$. Then $\int_{\gamma}ds={+\infty}.$   
 We denote by $\Gamma^{+\infty}$ the collection of all infinite curves.

\subsection{Modulus and capacities}\label{modulus-capacity}
\ 

A triple $(X,d,\mu)$ is called a metric measure space if $(X,d)$ is a separable metric space and $\mu$ is a nontrivial locally finite Borel-regular measure on $X$. For every set $A$, we set 
 \[
 \mu(A):=\inf\{\mu(U): A\subset U, U\ \text{\rm is a $\mu$-measurable subset of $X$} \}.
 \]
 By \cite[Proposition 3.3.37]{pekka}, the above $\mu$-measurable subset $U$ can be replaced by an open subset.

 Let $\Gamma$ be a family of curves in a metric measure space $(X,d,\mu)$. Let $1\le p<+\infty$. The  \textit{$p$-modulus} of $\Gamma$, denoted  $\text{\rm Mod}_p(\Gamma)$, is defined by 
\[\text{\rm Mod}_p(\Gamma):=\inf \int_{X}\rho^pd\mu
\]where the infimum is taken over all Borel functions $\rho:X\to[0,{+\infty}]$ satisfying 
$\int_{\gamma}\rho ds\geq 1
$ for every locally rectifiable curve $\gamma\in\Gamma$. A family of curves is called \textit{$p$-exceptional} if it has $p$-modulus zero. We say that a property  holds for \textit{$p$-a.e curve} (or $p$-almost every curve) if the collection of curves for which the property fails  is $p$-exceptional. 

Let $E,F$ be two subsets of a set $\Omega$. We define 
\begin{equation}\label{eq2.1-0506}
\text{\rm Mod}_p(E,F,\Omega):= \inf\int_\Omega\rho^pd\mu
\end{equation}
where the infimum  is taken over all Borel functions $\rho:\Omega\to[0,+\infty]$ satisfying $\int_{\gamma}\rho ds\geq 1$ for every rectifiable curve $\gamma$ in $\Omega$ connecting $E$ and $F$. If $\Omega=X$, we set $\text{\rm Mod}_p(E,F):=\text{\rm Mod}_p(E,F,X)$.

Let $u$ be a locally integrable function on $X$. A Borel function $\rho:X\to[0,{+\infty}]$ is said to be an \textit{upper gradient} of $u$ if 
\begin{equation}
\label{def-upper-gradient}|u(x)-u(y)|\leq \int_{\gamma}\rho\, ds
\end{equation}
for every rectifiable curve $\gamma$ connecting $x$ and $y$. Then we  have that \eqref{def-upper-gradient} holds for all compact subcurves of $\gamma\in\Gamma^{+\infty}$. For $1\le p<+\infty$, we say that $\rho$ is a  \textit{$p$-weak upper gradient} of $u$ if \eqref{def-upper-gradient} holds for $p$-a.e rectifiable curve. In what follows, we denote by $\rho_u$ the  \textit{minimal $p$-weak} upper gradient of $u$, which is unique up to sets of measure zero and which is minimal in the sense that $\rho_u\leq\rho $ a.e.\ for every $p$-integrable $p$-weak upper gradient $\rho$ of $u$. In \cite{H03}, the existence and uniqueness of such  minimal $p$-weak upper gradient are given. 
The notion of upper gradients is due to Heinonen and Koskela \cite{HK98}, and we refer  interested  readers to \cite{BB15,H03,HK98,N00} for a more detailed discussion on upper gradients.

Let $\Omega\subseteq X$. We denote $\dot N^{1,p}(\Omega):=\dot N^{1,p}(\Omega,d,\mu)$, with $1\leq p<+\infty$, the collection of all locally integrable functions for which an upper gradient is $p$-integrable on $(\Omega,d,\mu)$. We set $N^{1,p}(\Omega):=\dot N^{1,p}(\Omega)\cap L^p_\mu(\Omega)$ where $1\leq p<+\infty$ and $L^p_\mu(\Omega)$ consists of all $p$-integrable functions on $(\Omega,\mu)$.

Let $K\subset X$ be a subset. The \emph{Sobolev $p$-capacity} of $K$, denoted $C_p(K)$ is defined by 
\begin{equation}\label{eq2.3-17Dec}
C_p(K):=\inf\left(\int_X |u|^pd\mu+\int_X\rho_u^pd\mu\right),
\end{equation}
where the infimum is taken over all functions $u\in N^{1,p}(X)$ with the $p$-integrable minimal $p$-weak upper gradient $\rho_u$ such that $u|_K\equiv 1$.

Given two subsets $E,F$ of a set $\Omega$, we set 
\begin{equation}\label{eq2.3-0506}
	\text{\rm Cap}_p(E,F,\Omega):=\inf\int_\Omega \rho_u^pd\mu
\end{equation}
where the infimum is taken over all functions $u\in\dot N^{1,p}(\Omega)$ with the $p$-integrable minimal $p$-weak upper gradient $\rho_u$ such that $u|_E\equiv 1$ and $u|_{F}\equiv 0$. If $\Omega=X,$ we set $\text{\rm Cap}_p(E,F):=\text{\rm Cap}_p(E,F,\Omega)$.  

{By \cite[Lemma 6.2.2]{pekka}, one can replace the minimal $p$-weak upper gradients $\rho_u$ in \eqref{eq2.3-17Dec}-\eqref{eq2.3-0506} with upper gradients of the function $u$.}   


We have from \cite[Proposition 2.17]{HK98} that for two subsets $E,F$ of a set $\Omega$,
\begin{equation}\label{eq2.4-2308}
\text{\rm Mod}_p(E,F,\Omega)=\text{\rm Cap}_p(E,F,\Omega).
\end{equation}
\subsection{Ahlfors regular measures  and Poincar\'e inequalities}\label{admissible} \ 

Let $(X,d)$ be a metric space. A Borel regular measure $\mu$ is called \textit{doubling} if every ball in $X$ has  finite positive  measure and if there exists a finite constant $C\geq 1$ such that for all balls $B(x,r)$  with radius $r>0$ and center at $x\in X$,
\[0<\mu(B(x,2r))\leq C\mu(B(x,r))<+\infty.
\]

Let $1< Q<{+\infty}$. A Borel regular measure $\mu$ is said to be \textit{Ahlfors $Q$-regular} if there exists a finite constant $C_Q\geq 1$ such that for all balls $B(x,r)$ with radius $r>0$ and center at $x\in X$,
\[\frac{r^Q}{C_Q}\leq \mu(B(x,r))\leq C_Qr^Q.
\] Hence if $\mu$ is Ahlfors $Q$-regular where $1<Q<{+\infty}$, then $\mu$ is a doubling measure. Here $C_Q$ is said to be the {\it Ahlfors $Q$-regularity constant} of $\mu$.

Let $1\leq p<{+\infty}$. We say that a measure $\mu$ supports a  \textit{$p$-Poincar\'e inequality} if every ball in $X$ has  finite positive measure and if there exist finite constants $C>0$ and $\lambda\geq 1$ such that 
\[\dashint_{B(x,r)}|u-u_{B(x,r)}|d\mu \leq C r\left (\dashint_{B(x,\lambda r)}\rho^pd\mu\right )^{1/p}
\] for all balls $B(x,r)$ with radius $r>0$ and center at $x\in X$, and for all pairs $(u,\rho)$ satisfying \eqref{def-upper-gradient} such that $u$ is integrable on balls. Here $\lambda$ is called the {\it scaling constant} or {\it scaling factor} of the $p$-Poincar\'e inequality. 

We will use the following Morrey embedding theorem, see \cite[Theorem~9.2.14]{pekka}:
\begin{theorem}\label{thm:Morrey}
Let $1<Q<+\infty$ and let $Q<p<+\infty$.  Suppose that $(X,d,\mu)$ is a complete, unbounded,  Ahlfors $Q$-regular metric measure space supporting a $p$-Poincar\'e inequality.  Then there exists a finite constant $C\ge 1$ such that given a ball $B\subset X$ and a function $u\in\dot N^{1,p}(X)$, we have that 
\[
|u(x)-u(y)|\le C\rad(B)^{Q/p}d(x,y)^{1-Q/p}\left(\dashint_{4\lambda B}\rho_u^pd\mu\right)^{1/p}
\]
for all $x,y\in B$.  Here $\lambda$ is the scaling factor of the Poincar\'e inequality, and $\rad(B)$ is the radius of $B$.
\end{theorem}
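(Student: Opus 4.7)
The plan is the standard chaining argument for Morrey's inequality adapted to the metric setting. Fix $x, y \in B$ with $r := d(x,y)$ and consider the dyadic sequences of balls $B_i^x := B(x, 2^{-i} r)$ and $B_i^y := B(y, 2^{-i} r)$ for $i \geq 0$. Since $\mu$ is doubling (being Ahlfors $Q$-regular), the Lebesgue differentiation theorem gives $u_{B_i^x} \to u(x)$ and $u_{B_i^y} \to u(y)$ at $\mu$-almost every $x$ and $y$. I would first establish the desired pointwise bound at such Lebesgue points; since $1 - Q/p > 0$, the bound then forces $u$ to admit a H\"older continuous representative, so the inequality extends to every $x, y \in B$ for that representative.

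For the telescoping, I would use doubling together with the $p$-Poincar\'e inequality to estimate, for each $i \geq 0$,
\[
|u_{B_{i+1}^x} - u_{B_i^x}| \leq C \dashint_{B_i^x} |u - u_{B_i^x}|\, d\mu \leq C\, 2^{-i} r \left( \dashint_{\lambda B_i^x} \rho_u^p\, d\mu \right)^{1/p}.
\]
Ahlfors $Q$-regularity then lets me pull out the measure factor $\mu(\lambda B_i^x)^{-1/p} \simeq (2^{-i} r)^{-Q/p}$ and bound the remaining integral by $\|\rho_u\|_{L^p(4\lambda B)}$. The crucial feature of the regime $p > Q$ is that $1 - Q/p > 0$, so the geometric series $\sum_i (2^{-i} r)^{1 - Q/p}$ converges, and the telescoping yields
\[
|u(x) - u_{B_0^x}| \leq C r^{1 - Q/p} \|\rho_u\|_{L^p(4\lambda B)},
\]
and symmetrically for $y$.

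To close the argument, I need to compare the two starting averages $u_{B_0^x}$ and $u_{B_0^y}$. Since $B_0^x, B_0^y \subset \widetilde B := B(x, 2r)$, comparing both to $u_{\widetilde B}$ through one more application of doubling and the $p$-Poincar\'e inequality bounds $|u_{B_0^x} - u_{B_0^y}|$ by the same quantity $C r^{1 - Q/p} \|\rho_u\|_{L^p(4\lambda B)}$. Combining the three estimates and rewriting
\[
\|\rho_u\|_{L^p(4\lambda B)} = \mu(4\lambda B)^{1/p} \left( \dashint_{4\lambda B} \rho_u^p\, d\mu \right)^{1/p} \simeq \rad(B)^{Q/p} \left( \dashint_{4\lambda B} \rho_u^p\, d\mu \right)^{1/p}
\]
via Ahlfors regularity produces the stated inequality.

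The main bookkeeping obstacle is checking that every intermediate ball used in the chain is contained in the enlargement $4\lambda B$ appearing in the statement. Because $x, y \in B$ may lie near the boundary of $B$ and $r \leq 2\rad(B)$, one has $\lambda B_i^x, \lambda B_i^y \subset B(x, 2\lambda r) \subset 3\lambda B$, and the auxiliary ball $\lambda \widetilde B = B(x, 2\lambda r)$ is likewise contained in $4\lambda B$, so the choice of enlargement factor is exactly what is needed. A secondary subtlety is that the pointwise bound initially holds only at Lebesgue points, but, as remarked above, this immediately upgrades $u$ to a H\"older continuous representative on $B$ with the same modulus, and the inequality then holds for all $x, y \in B$.
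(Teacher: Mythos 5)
The paper does not prove this statement; it quotes it from \cite[Theorem~9.2.14]{pekka}, and your telescoping/chaining argument is exactly the standard proof of that result, so in substance your proposal is correct: the Poincar\'e--plus--Ahlfors-regularity estimate on each dyadic ball, the convergence of $\sum_i (2^{-i}r)^{1-Q/p}$ because $p>Q$, and the upgrade from Lebesgue points to a H\"older continuous representative are all as they should be.

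One bookkeeping claim is off, though you correctly identified it as the delicate point. The chains $\lambda B_i^x\subset B(x,\lambda r)$ and $\lambda B_i^y\subset B(y,\lambda r)$ do land in $(1+2\lambda)B\subset 3\lambda B$, but the auxiliary ball $\lambda\widetilde B=B(x,2\lambda r)$ does not: with $r<2\rad(B)$ and $x$ possibly near $\partial B$ one only gets $B(x,2\lambda r)\subset (1+4\lambda)B\subset 5\lambda B$, not $3\lambda B$ or $4\lambda B$ as you assert (the inequality $1+4\lambda\le 4\lambda$ never holds). So as written your argument proves the estimate with $5\lambda B$ in place of $4\lambda B$. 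This is harmless --- every application of the theorem in the paper is insensitive to the dilation constant, and one can either enlarge the constant in the statement or rearrange the final comparison step to recover the quoted dilation --- but you should not claim that the factor $4\lambda$ ``is exactly what is needed'' without checking that last containment more carefully.
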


For more on Poincar\'e inequalities, we refer the interested readers to \cite{HK00,HK95}.
 
We will use the following $p$-modulus and $p$-capacity estimates for annular regions.  For proof, see for instance \cite[Proposition 2.17 and Lemma 3.14]{HK98} and \cite[Theorem 9.3.6]{pekka}.


\begin{theorem}\label{thm2.1-0506}
Let  $1<Q<+\infty$ and let $1\le p<+\infty$.
Suppose that $(X,d,\mu)$ is an unbounded metric measure space with metric $d$ and Ahlfors $Q$-regular measure $\mu$ supporting a $p$-Poincar\'e inequality.
We assume that $X$ is complete. 
Then for $0<2r<R<+\infty$, we have that
\[
	\text{\rm Mod}_p(B(O,r), X\setminus B(O,R)) \approx \text{\rm Cap}_p(B(O,r), X\setminus B(O,R))\simeq
 \begin{cases}
     r^{Q-p},&p<Q;\\
     \left(\log \left(\frac{R}{r}\right)\right)^{1-Q},& p=Q;\\
     R^{Q-p},& p>Q;
 \end{cases}  
\]
for a given $O\in X$.
\end{theorem}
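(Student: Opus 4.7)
The plan is to reduce the theorem to a two-sided estimate on the variational capacity via the identity \eqref{eq2.4-2308}, and to establish the upper and lower bounds by separate arguments tailored to each regime.

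\textbf{Upper bound.} I would produce explicit radial test functions $u(x) = \phi(d(O,x))$ adapted to each regime. For $p<Q$, take $\phi$ decreasing linearly from $1$ to $0$ on $[r,2r]$ (admissible since $2r<R$), so that $g_u\lesssim r^{-1}\chi_{B(O,2r)\setminus B(O,r)}$ and Ahlfors regularity gives $\int g_u^p\,d\mu\lesssim r^{-p}\mu(B(O,2r))\simeq r^{Q-p}$. For $p>Q$, take $\phi$ decreasing linearly on $[R/2,R]$ (admissible since $r<R/2$), yielding $\int g_u^p\,d\mu\lesssim R^{-p}\mu(B(O,R))\simeq R^{Q-p}$. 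In the critical case $p=Q$, use the logarithmic cutoff $\phi(t)=\log(R/t)/\log(R/r)$ on $[r,R]$, extended to $1$ on $[0,r]$ and $0$ on $[R,\infty)$, so that $g_u\lesssim 1/(d(O,x)\log(R/r))$ on the annulus; summing the energy over dyadic shells via Ahlfors regularity produces $\int g_u^Q\,d\mu\lesssim N/(\log(R/r))^Q\simeq (\log(R/r))^{1-Q}$, where $N\simeq\log_2(R/r)$.

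\textbf{Lower bound.} Let $u$ be admissible for the capacity, truncated to $[0,1]$, with upper gradient $g_u$. Set $B_k=B(O,2^k r)$ for $k=0,1,\dots,N$, where $N$ is the smallest integer with $2^N r$ a fixed multiple of $R$ chosen large enough that $|u_{B_N}|\le 1/2$ (which Ahlfors regularity supplies after absorbing a bounded number of extra scales, using that $u\equiv 0$ outside $B(O,R)$ while $0\le u\le 1$). Since $u\equiv 1$ on $B_0$, one has $u_{B_0}=1$, and telescoping gives
\[
\tfrac{1}{2}\le |u_{B_0}-u_{B_N}|\le\sum_{k=0}^{N-1}|u_{B_k}-u_{B_{k+1}}|.
\]
Using an annular form of the $p$-Poincar\'e inequality (derivable from the ball version by a covering argument in complete Ahlfors regular Poincar\'e spaces), I would estimate $|u_{B_k}-u_{B_{k+1}}|\lesssim 2^k r\bigl(\dashint_{A_k'}g_u^p\,d\mu\bigr)^{1/p}$ for annular regions $A_k'$ with uniformly bounded overlap. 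H\"older's inequality in the index $k$, combined with $\mu(A_k')\simeq (2^k r)^Q$, yields
\[
1\lesssim\left(\sum_{k=0}^{N-1}(2^k r)^{(p-Q)/(p-1)}\right)^{(p-1)/p}\left(\int_X g_u^p\,d\mu\right)^{1/p}.
\]
The geometric sum is controlled by its first term $r^{(p-Q)/(p-1)}$ when $p<Q$, by its last term $R^{(p-Q)/(p-1)}$ when $p>Q$, and equals $N\simeq\log(R/r)$ when $p=Q$. Rearranging delivers the stated lower bounds $r^{Q-p}$, $R^{Q-p}$, and $(\log(R/r))^{1-Q}$ respectively; the edge case $p=1$ is handled by replacing H\"older with a direct maximum over $k$ in the sum.

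\textbf{Main obstacle.} The sharpest point is the critical exponent $p=Q$, where a naive telescope against centered balls $\lambda B_{k+1}$ incurs a multiplicity factor of order $N$ in the bounded-overlap step, producing $(\log(R/r))^{-Q}$ rather than the sharp $(\log(R/r))^{1-Q}$. Routing the estimate through annular Poincar\'e, so that the dilated annuli have multiplicity bounded by a constant independent of $N$, is essential. A secondary bookkeeping point is ensuring that all implicit constants depend only on the data $(Q,p,C_Q,\lambda,C_{PI})$ and not on $r$ or $R$, which must be tracked through both the telescope and the geometric-sum estimates.
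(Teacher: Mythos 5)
The paper does not actually prove this statement; it quotes it from \cite[Proposition 2.17 and Lemma 3.14]{HK98} and \cite[Theorem 9.3.6]{pekka}, so there is no in-paper proof to compare against. Your overall strategy (explicit radial test functions for the upper bound, a dyadic telescope with the Poincar\'e inequality for the lower bound) is the standard one used in those references, and your upper bound is correct as written.

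The lower bound, however, contains a step that fails, precisely at the point you identify as the crux. The inequality $|u_{B_k}-u_{B_{k+1}}|\lesssim 2^kr\bigl(\dashint_{A_k'}g_u^p\,d\mu\bigr)^{1/p}$, with $A_k'$ annuli of bounded overlap, is false for averages over the \emph{centered} balls $B_k=B(O,2^kr)$: take $u\equiv1$ on $B(O,r)$ and $u\equiv0$ off $B(O,2r)$ (an admissible competitor); then for $k\ge 2$ one has $g_u\equiv 0$ on $A_k'$ while $u_{B_k}-u_{B_{k+1}}\simeq 2^{-kQ}\ne 0$, because both averages still see the values of $u$ near $O$, with different weights. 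No covering argument can produce such an estimate, since the left-hand side is not local to the annulus. (For $p>Q$ there is the further problem that annuli need not be quantitatively connected under the stated hypotheses --- the paper points this out after Theorem~\ref{thm:p>Q Growth} --- so an ``annular Poincar\'e inequality'' is not available there; but for $p\ne Q$ you do not need one: the weights $(2^kr)^{1-Q/p}$ form a geometric series, so bounding each $\int_{\lambda B_{k+1}}g_u^p\,d\mu$ crudely by $\int_Xg_u^p\,d\mu$ and summing the series already yields $r^{Q-p}$, resp.\ $R^{Q-p}$.) The critical case $p=Q$ genuinely requires localization, and the correct way to obtain it is to change the telescoping quantities: fix balls $D_k$ of radius $\simeq 2^kr$ contained in the annuli, connect $D_k$ to $D_{k+1}$ by a chain of boundedly many balls of comparable radius inside a fattened annulus using the annular chain property (available for $p\le Q$ by \cite[Theorem~3.3]{K07} and Proposition~\ref{prop2.3}), and handle the two endpoints by a pointwise telescope down to Lebesgue points of $u$ in $B(O,r)$ and in $X\setminus B(O,R)$ via the fractional maximal function. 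This is exactly the machinery of Lemmas~\ref{lem3.1} and~\ref{lem3.2} in the paper; your argument needs that, not an annular Poincar\'e inequality for centered-ball averages.
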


\subsection{Chain conditions}
\label{sec-dyadic} \ 

In this paper, we employ the following  annular chain property which is given in \cite[Section 2.5]{KN22}.
\begin{definition} 
	\label{dyadic}
	Let $\lambda\geq1$. We say that $X$ satisfies an annular $\lambda$-chain condition at $O$  if the following holds. There are finite constants $c_1\geq 1, c_2\geq 1, \delta>0$ and a finite number $M<{+\infty}$  so that given $r>0$ and points $x,y\in B(O,r)\setminus B(O,r/2)$, one can find balls $B_1, B_2, \ldots, B_k$ with the following properties:
	\begin{enumerate}
		\item[1.] $k\leq M$.
		\item[2.] $B_1=B(x,r/(\lambda c_1))$, $B_k=B(y,r/(\lambda c_1))$ and the radius of  each $B_i$ is $r/(\lambda c_1)$ for $1\leq i\leq k$. 
		\item[3.] $ B_i\subset B(O,c_2r)\setminus B(O,r/c_2)$ for $1\leq i\leq k$.
		\item[4.] For each $1\leq i\leq k-1$, there is a ball $D_i\subset B_i\cap B_{i+1}$ with radius $\delta r$.
	\end{enumerate}
	If $X$ satisfies an annular $\lambda$-chain condition at $O$ for every $\lambda\geq 1$, we say that $X$ has the annular chain property.
\end{definition}

By \cite[Lemma 2.3]{KN22}, we have the following sufficient criterion guaranteeing the annular chain property holds:
\begin{proposition} \label{prop2.3}
Let $\mu$ be doubling on an unbounded metric space $(X,d)$.
Suppose that there is a finite constant $c_0\geq 1$ so that for every $r>0$, each pair $x,y$ of points in $B(O,r)\setminus B(O,r/2)$ can be joined by a curve in $B(O,c_0r)\setminus B(O,r/c_0)$. Then $X$ has the annular chain property.
\end{proposition}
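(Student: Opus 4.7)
The plan is to deduce the chain property directly from a single curve $\gamma\subset B(O,c_0 r)\setminus B(O,r/c_0)$ joining $x$ and $y$, which is furnished by the hypothesis, by discretizing it with a chain of balls whose length is controlled by the doubling condition rather than by the (a priori uncontrolled) length of $\gamma$. For a given $\lambda\ge 1$, I would fix the constants $c_1:=2c_0$ and $c_2:=2c_0$ and set $s:=r/(3\lambda c_1)$; these choices ensure that any closed ball of radius $r/(\lambda c_1)$ centered in $B(O,c_0 r)\setminus B(O,r/c_0)$ lies in $B(O,c_2 r)\setminus B(O,r/c_2)$, which takes care of property (3) of Definition~\ref{dyadic}.

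The heart of the argument is a standard doubling-packing estimate: by iterating the doubling condition one obtains an integer $N$ depending only on the doubling constant, $c_0$, $\lambda$, and $c_1$ (but not on $r$), such that any $s$-separated subset of $B(O,c_0 r)\setminus B(O,r/c_0)$ has at most $N$ elements. I would choose a maximal $s$-separated collection $\{w_1,\ldots,w_N\}$ in this annulus; by maximality, the balls $B(w_j,s)$ cover the annulus and hence the image $|\gamma|$. Form the graph $G$ on $\{1,\ldots,N\}$ with $j\sim \ell$ iff $B(w_j,s)\cap B(w_\ell,s)\neq\emptyset$. Since $|\gamma|$ is connected, the indices $j$ for which $B(w_j,s)$ meets $|\gamma|$ must lie in a single connected component of $G$, for otherwise the disjoint unions of $s$-balls associated to two components would separate $|\gamma|$. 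Hence some graph-theoretic path $w_{j_1},\ldots,w_{j_m}$ with $m\le N$ connects a ball containing $x$ to a ball containing $y$.

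Setting $B_1:=B(x,r/(\lambda c_1))$, $B_{i+1}:=B(w_{j_i},r/(\lambda c_1))$ for $1\le i\le m$, $B_{m+2}:=B(y,r/(\lambda c_1))$, and $k:=m+2\le N+2=:M$, properties (1) and (2) are immediate, and property (3) follows from the choice of $c_2$. For property (4), at each junction $i$ I select a point $u$ in the intersection of the two relevant $s$-balls (taking $u=x$ at the leftmost junction and $u=y$ at the rightmost); a two-step triangle inequality then shows $B(u,s)\subset B_i\cap B_{i+1}$, so $\delta:=1/(3\lambda c_1)$ and $D_i:=B(u,s)$ complete the construction.

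The principal obstacle is securing the uniform bound $M$ on the length of the chain. A naive strategy of sampling points along $\gamma$ at regular arc-length intervals fails, because nothing in the hypothesis controls $\ell(\gamma)$---only the region in which $\gamma$ lies. The doubling assumption bypasses this by providing a scale-invariant bound on how many balls of radius comparable to $r$ can be packed into an annulus of radius comparable to $r$, after which $\gamma$ enters the argument only as a certificate of connectivity in the finite combinatorial graph $G$.
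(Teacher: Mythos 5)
Your argument is correct, and it is the standard one for deducing chain conditions of this type: a maximal $s$-separated net in the annulus $B(O,c_0r)\setminus B(O,r/c_0)$ with $s\simeq r/\lambda$, a packing bound from doubling that caps the net's cardinality by a constant $N=N(\lambda,c_0,C_\mu)$ independent of $r$, and the curve used only to certify that the balls meeting $|\gamma|$ lie in one component of the intersection graph (your disconnection argument via the two disjoint open unions is the right way to make this precise, and the triangle-inequality verifications for properties (2)--(4) with $\delta=1/(3\lambda c_1)$ all check out). Note that the paper itself does not prove this proposition but cites \cite[Lemma 2.3]{KN22}, so your write-up is a self-contained substitute for that reference; the argument there is of the same covering/packing type. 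One small point to tidy: property (3) for the endpoint balls $B_1=B(x,r/(\lambda c_1))$ and $B_k=B(y,r/(\lambda c_1))$ requires $r/2-r/(2\lambda c_0)\ge r/(2c_0)$, which can fail if $c_0$ is close to $1$ (e.g.\ $c_0=\lambda=1$). This is harmless: the hypothesis is monotone in $c_0$ (and in fact already forces $c_0\ge 2$, since the curve's endpoints $x,y$ may satisfy $d(O,x)=r/2$ and must lie in $B(O,c_0r)\setminus B(O,r/c_0)$), so you should state explicitly that you normalize $c_0\ge 2$ before fixing $c_1=c_2=2c_0$.
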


By \cite[Theorem~3.3]{K07}, a complete, Ahlfors $Q$-regular metric measure space $(X,d,\mu)$ supporting a $p$-Poincar\'e inequality with $1\le p\le Q$ satisfies the hypotheses of Proposition~\ref{prop2.3}, and so such a metric measure space has the annular chain property.  Furthermore, from the proof of \cite[Lemma~2.3]{KN22}, we see that in such spaces, only the constants $M$ and $\delta$ in Definition~\ref{dyadic} depend on $\lambda$, whereas the constants $c_1$ and $c_2$  are independent of $\lambda$. In such spaces, we also have the following modification of the annular chain property: 
\begin{proposition}\label{prop2.3-0806} Suppose that $X$ is complete and $\mu$ is an Ahlfors $Q$-regular measure supporting a $p$-Poincar\'e inequality on an unbounded metric space $(X,d)$ where $1<Q<+\infty$ and $1\le p\le Q$. Then
there are finite constants $\overline{c_1}\geq 1, \overline {c_2}\geq 1, \delta>0$ and a finite number $2\leq \overline M<{+\infty}$  so that given $+\infty>r_2\geq 2r_1>0$ and two balls $B$ and $B'$ with radius $\frac{r_2}{r_1}$ in $B(O,\overline{c_1} r_2)\setminus B(O,r_1/\overline{c_1})$, one can find balls $B_1, B_2, \ldots, B_k$ with the following properties:
	\begin{enumerate}
		\item[1.] $k\leq \overline M$.
		\item[2.] $B_1=B$, $B_k=B'$ and the radius of  each $B_i$ is comparable to $r_2/r_1$ for $1\leq i\leq k$. 
		\item[3.] $ B_i\subset B(O,\overline{c_2}r_2)\setminus B(O,r_1/\overline{c_2})$ for $1\leq i\leq k$.
		\item[4.] For each $1\leq i\leq k-1$, there is a ball $D_i\subset B_i\cap B_{i+1}$ with radius $\delta \frac{r_2}{r_1}$.
	\end{enumerate}
\end{proposition}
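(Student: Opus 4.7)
The plan is to extend the single-scale annular chain argument of Proposition~\ref{prop2.3} (\cite[Lemma~2.3]{KN22}) to the multi-scale annular region spanning from the inner radius $r_1/\overline{c_1}$ to the outer radius $\overline{c_1} r_2$. Under the stated hypotheses of completeness, Ahlfors $Q$-regularity with $1 \le p \le Q$, and $p$-Poincar\'e inequality, \cite[Theorem~3.3]{K07} guarantees the curve-connectivity hypothesis of Proposition~\ref{prop2.3} at every scale. In particular, $X$ is quasiconvex, and moreover any two points lying in an annular region can be joined by a curve that remains within a slightly enlarged annulus.

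The key steps are as follows. First, I would join the centers $x$ and $y$ of the given balls $B$ and $B'$ by a curve $\gamma$ contained in $B(O, \overline{c_2} r_2) \setminus B(O, r_1/\overline{c_2})$ for an appropriate $\overline{c_2}\geq \overline{c_1}$. This uses the quasiconvexity of $X$ together with a reflection/projection argument of the type used in the proof of Proposition~\ref{prop2.3}, pushing portions of the path away from $O$ whenever they enter the forbidden inner ball. Second, I would partition $\gamma$ and cover it by a chain of balls $B_1,\ldots,B_k$ with $B_1=B$ and $B_k=B'$, each of radius comparable to $r_2/r_1$, with centers spaced so that consecutive balls overlap in a ball of radius $\delta\cdot r_2/r_1$ for some uniform $\delta>0$. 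The doubling property together with Ahlfors $Q$-regularity ensures that such a spacing is achievable and that each overlap contains a ball of a definite relative size. Containment of every $B_i$ in the enlarged annulus $B(O,\overline{c_2} r_2)\setminus B(O,r_1/\overline{c_2})$ then follows from the corresponding property of $\gamma$ after possibly enlarging $\overline{c_2}$ by a constant factor.

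The main obstacle is the uniform bound $\overline{M}$ on the chain length, which must be independent of the specific values of $r_1$ and $r_2$. Since the curve $\gamma$ can have length of order $\overline{c_2} r_2$ while each chain ball has radius only $r_2/r_1$, a naive counting would yield $\overline{M}\sim r_1$, which is not uniform. To overcome this, I would organize the chain dyadically: within each dyadic sub-annulus $B(O,2^{j+1}r_1)\setminus B(O,2^j r_1)$ visited by $\gamma$, invoke Proposition~\ref{prop2.3} with a scaling constant tuned to produce chain balls of the required radius $r_2/r_1$, and then bridge between consecutive sub-annuli using a bounded number of transition balls. Because the constants in Proposition~\ref{prop2.3} become uniform once the scaling factor $\lambda$ is fixed, and because Ahlfors $Q$-regularity gives uniform per-scale counts, the total count $\overline{M}$ is bounded by a constant depending only on the data, as required.
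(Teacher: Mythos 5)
Your overall route is the same as the paper's: use \cite[Theorem~3.3]{K07} to get that any two points of $B(O,r_2)\setminus B(O,r_1)$ can be joined by a curve inside an enlarged annulus $B(O,\overline{c_1}r_2)\setminus B(O,r_1/\overline{c_1})$, and then run the ball-chain construction from the proof of Proposition~\ref{prop2.3} (i.e.\ \cite[Lemma~2.3]{KN22}) along that curve. The paper's proof consists of exactly these two sentences and nothing more.

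The difference is that you correctly single out the genuine difficulty --- the uniform bound $\overline M$ on the chain length --- but your proposed repair does not close it. A chain of $k$ pairwise-overlapping balls of radius $\rho$ can span a distance of at most $2k\rho$; here the centers of $B$ and $B'$ may be as far apart as $\sim\overline{c_1}r_2$ while the prescribed radius is $\rho=r_2/r_1$, which forces $k\gtrsim r_1$, not a constant. The dyadic organization does not remove this obstruction for two reasons. First, the number of dyadic sub-annuli between radii $r_1$ and $r_2$ is about $\log_2(r_2/r_1)$, which is unbounded, so even a uniformly bounded count per sub-annulus yields an unbounded total. Second, ``tuning the scaling constant'' of Proposition~\ref{prop2.3} so that the chain balls in the sub-annulus at scale $2^jr_1$ have radius $r_2/r_1$ amounts to taking $\lambda\approx 2^jr_1^2/r_2$, and in Definition~\ref{dyadic} the count $M$ (and $\delta$) depend on $\lambda$, so uniformity is again lost. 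Your final sentence, asserting that Ahlfors regularity gives a total count depending only on the data, is therefore unjustified; the metric obstruction (distance spanned versus ball radius) cannot be circumvented by any covering argument. Note that the bound $k\le\overline M$ is only attainable when the radius $r_2/r_1$ of $B$ and $B'$ is comparable to the diameter $\sim r_2$ of the region to be traversed (equivalently, when the separation of the two balls is at most a bounded multiple of their radius); the paper's own proof silently defers this point to ``an analog proof of Proposition~\ref{prop2.3}'' and does not address it either, so you have not missed an idea that the paper supplies --- but as written your argument has a genuine gap precisely at the step you yourself identify as the main obstacle.
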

\begin{proof} Since $X$ is complete and $\mu$ is an Ahlfors $Q$-regular measure supporting a $p$-Poincar\'e inequality where $1<Q<+\infty$ and $1\le p\le Q$, we have that the assumption of Proposition \ref{prop2.3} holds, see for instance in \cite[Theorem 3.3]{K07}. In particular, there is a finite constant $c_0\geq 1$ so that for every $+\infty>r_2\geq 2r_1>0$, each pair $x,y$ of points in $B(O,r_2)\setminus B(O,r_1)$ can be joined by a curve in $B(O,c_0r_2)\setminus B(O,r_1/c_0)$. Hence the claim follows by an analog of the proof of Proposition \ref{prop2.3} (see \cite[Lemma 2.3]{KN22}).
\end{proof}
%
\subsection{Loewner spaces}\ 

Let $(X,d,\mu)$ be a metric measure space. For $1<Q<+\infty$, we define the {\it Loewner function} $\phi_{X,Q}:(0,+\infty)\to [0,+\infty)$ of $X$ by 
\[
	\phi_{X,Q}(t)=\inf \{\text{\rm Mod}_Q(E,F,X): \Delta(E,F)\leq t \}
\]
where $E$ and $F$ are disjoint non-degenerate continua in $X$ with 
	\[
		\Delta(E,F)=\frac{\text{\rm dist}(E,F)}{\min\{\text{\rm diam}E, {\rm diam} F\}}
	\]
designating their relative distance in $X$. Here ${\rm dist}(E,F):=\inf\{d(x,y): x\in E, y\in F \}$ and ${\rm diam}E:=\sup\{d(x,y): x,y\in E\}$. If one can not find two disjoint continua in $X$, it is understood that $\phi_{X,Q}(t)\equiv 0$. By the definition, the function $\phi_{X,Q}$ is decreasing.

Let $1<Q<+\infty$, a pathwise connected metric measure space $(X,d,\mu)$ is said to be a {\it Loewner space} of exponent $Q$ or a {\it $Q$-Loewner space}, if the Loewner function $\phi_{X,Q}(t)$ is positive for all $t>0$. 

In fact, we have the following theorems.
\begin{theorem}\label{thm2.6-0709}
Let $1<Q<+\infty$.
 Let $(X,d,\mu)$ be a complete, unbounded metric measure space with metric $d$ and Ahlfors $Q$-regular measure $\mu$ supporting a $Q$-Poincar\'e inequality. 
Then $X$ is a $Q$-Loewner space.
\end{theorem}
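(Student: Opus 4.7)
The plan is to use the modulus–capacity identity \eqref{eq2.4-2308} to reduce the Loewner lower bound to a uniform lower bound on $\text{\rm Cap}_Q(E,F,X)$ over all disjoint non-degenerate continua $E,F$ with $\Delta(E,F)\le t$. Fix such a pair, and let $(u,\rho)$ be admissible for this capacity, meaning $u|_E\equiv 1$, $u|_F\equiv 0$, and $\rho\in L^Q(X)$ is an upper gradient of $u$. The goal is to show that $\int_X\rho^Q\,d\mu\ge c(t)>0$, where $c(t)$ depends only on $t$, $Q$, and the structural constants of $X$.

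Set $D:=\min\{\diam E,\diam F\}$ and $R:=\text{\rm dist}(E,F)\le tD$. Pick $x_0\in E$ and $y_0\in F$ with $d(x_0,y_0)\le 2R$. Completeness together with Ahlfors $Q$-regularity and the $Q$-Poincar\'e inequality supplies, via \cite[Theorem 3.3]{K07}, the connectivity hypothesis of Proposition \ref{prop2.3}, so I can build a chain of balls $B_1,\ldots,B_N$ of common radius $r\simeq D$ with $N=N(t)$ depending only on $t$ and the data, such that $B_1$ is centered on $E$, $B_N$ is centered on $F$, consecutive balls overlap in a ball of comparable radius, and each enlarged ball $\lambda B_i$ (with $\lambda$ the Poincar\'e scaling factor) lies in a fixed region of diameter $\lesssim_t D$.

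The standard telescoping estimate combined with the $Q$-Poincar\'e inequality gives
\[
|u_{B_1}-u_{B_N}|\le\sum_{i=1}^{N-1}|u_{B_i}-u_{B_{i+1}}|\lesssim\sum_{i=1}^{N-1} r\left(\dashint_{\lambda B_i}\rho^Q\,d\mu\right)^{1/Q}.
\]
Ahlfors regularity gives $r\simeq\mu(B_i)^{1/Q}$, and a H\"older estimate in $i$ together with the bounded overlap of $\{\lambda B_i\}_{i=1}^{N-1}$ bounds the right-hand side by $CN(t)^{(Q-1)/Q}\left(\int_X\rho^Q\,d\mu\right)^{1/Q}$. I then claim that when $\int_X\rho^Q\,d\mu$ is sufficiently small, one has $|u_{B_1}-1|\le 1/4$ and $|u_{B_N}|\le 1/4$: choosing the center of $B_1$ on $E$ so that $E\cap B_1$ contains a continuum of diameter $\gtrsim r$, an iteration of the Poincar\'e chain argument on dyadically shrinking balls pressed onto $E$ forces $u_{B_1}$ to lie close to $1$. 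Combining these bounds yields $1\le |u_{B_1}-u_{B_N}|+\tfrac12\lesssim N(t)^{(Q-1)/Q}\left(\int_X\rho^Q\,d\mu\right)^{1/Q}+\tfrac12$, which forces $\int_X\rho^Q\,d\mu\ge c(t)>0$, as required.

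The main obstacle is this last quantitative step: showing that the ball average $u_{B_1}$ is close to $1$ given only that $u\equiv 1$ on a continuum of comparable diameter in $B_1$. This is essentially the statement that a continuum has a positive lower bound on $Q$-capacity relative to a concentric ball, uniformly in scale, and it is itself intimately tied to the Loewner property being proved. It can be established by the iterated sub-chain Poincar\'e argument sketched above, using the continuum structure of $E$ at every dyadic scale to prevent $u$ from deviating much from $1$ without consuming a definite amount of $\rho^Q$-mass, or else by appealing directly to the theorem of Heinonen and Koskela \cite{HK98}, where this statement is proved under precisely the present hypotheses.
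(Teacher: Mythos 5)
Your proposal is correct in outline but takes a genuinely different route from the paper. The paper's proof is a two-line citation: it invokes \cite[Theorem~9.6]{Hei01} (Loewner property for proper, quasiconvex, Ahlfors $Q$-regular spaces with a $Q$-Poincar\'e inequality) and simply checks that completeness together with $Q$-regularity and the Poincar\'e inequality yields properness \cite[Lemma~4.1.14]{pekka} and quasiconvexity \cite[Theorem~8.3.2]{pekka}. You instead sketch the underlying Heinonen--Koskela argument: reduce to a capacity lower bound via \eqref{eq2.4-2308}, chain balls of radius $\simeq D$ between the continua, telescope with the Poincar\'e inequality and H\"older in the chain index, and then control the endpoint averages. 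This is the same skeleton as the paper's own Lemma~\ref{lem5.1-2308} and Theorem~\ref{thm2.7-0709}. The step you flag as the ``main obstacle'' --- forcing $u_{B_1}$ close to $1$ from $u\equiv 1$ on a continuum of diameter $\gtrsim r$ inside $B_1$ --- is genuinely the crux, but it is \emph{not} circular and is closable with tools already in the paper: run the dyadic telescoping of Lemma~\ref{lem3.2} with $p=Q$ and $\alpha=1$, so that the exceptional set where $|u(x)-u_{B(x,r)}|\ge 1/5$ has $\mathcal H^{1}_{+\infty}$-content $\lesssim r\int\rho^Q d\mu$ by Theorem~\ref{fractional-maximal-theorem}, while the continuum $E\cap B_1$ has $\mathcal H^1_{+\infty}\gtrsim r$ by the connectedness argument of Lemma~\ref{lem2.3-0505}; either a good endpoint exists in each continuum and the chain estimate applies, or $\int_X\rho^Q d\mu\gtrsim 1$ outright. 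Two minor imprecisions: the chain you need joins two nearby points, so the relevant input is quasiconvexity (a curve of length $\lesssim tD$ covered by $N(t)$ balls), not the annular chain condition of Proposition~\ref{prop2.3}/\cite{K07}; and your fallback of ``appealing directly to \cite{HK98}'' for the endpoint estimate is really an appeal to the source where the whole Loewner theorem is proved, at which point your argument collapses to the same kind of citation the paper uses. What your route buys, if carried out in full, is a self-contained quantitative proof using only the lemmas already established in the paper; what the paper's route buys is brevity.
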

\begin{proof}
By Theorem 9.6 in \cite{Hei01}, we have that if $(X,d,\mu)$ is quasiconvex, proper, Ahlfors $Q$-regular and supports a $Q$-Poincar\'e inequality then $X$ is a $Q$-Loewner space. Since $X$ is complete and Ahlfors $Q$-regular supports a $Q$-Poincar\'e inequality, we obtain that $X$ is proper (see for instance \cite[Lemma 4.1.14]{pekka}) and quasiconvex (see for instance \cite[Theorem 8.3.2]{pekka}). The proof completes.
\end{proof}
\begin{theorem} [Section 3 in \cite{HK98}] \label{thm2.7-0709}
 Let $1<Q<+\infty$.
 Let $(X,d,\mu)$ be a complete, unbounded metric measure space with metric $d$ and Ahlfors $Q$-regular measure $\mu$ supporting a $Q$-Poincar\'e inequality. 
Let $0<r<+\infty$, and let $E,F$ be disjoint non-degenerate continua which are subsets of $B(O,Cr)\setminus B(O,r/C)$, for a given $+\infty>C>1$, such that $\min\{\diam E,\diam F\}\ge r/8$. Then there is a finite constant $\delta>0$ independent of $r$ so that 
 \[
 	{\rm Mod}_Q(E,F, B(O,Cr)\setminus B(O,r/C)) \geq \delta.
 \]
\end{theorem}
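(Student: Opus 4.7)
My plan is to combine the global Loewner property of $X$ from Theorem~\ref{thm2.6-0709} with the capacity decay estimates of Theorem~\ref{thm2.1-0506}, following the approach of \cite[Section~3]{HK98}. By rescaling $(d,\mu) \mapsto (d/r, \mu/r^Q)$ we may reduce to the case $r = 1$, since this scaling preserves Ahlfors $Q$-regularity with the same constant, the $Q$-Poincar\'e inequality, and the $Q$-modulus. So we must show $\text{\rm Mod}_Q(E, F, A) \geq \delta > 0$ with $A := B(O, C) \setminus B(O, 1/C)$ and $\delta$ depending only on $C$ and the data. Since $\text{\rm dist}(E,F) \leq \diam \overline{A} \leq 2C$ and $\min\{\diam E, \diam F\} \geq 1/8$, the relative distance satisfies $\Delta(E, F) \leq 16C$, so Theorem~\ref{thm2.6-0709} gives
\[
\text{\rm Mod}_Q(E, F, X) \geq \phi_{X, Q}(16C) =: 2\delta_0 > 0.
\]

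The next step is to transfer this global lower bound to the restricted modulus in $A$. Using the duality $\text{\rm Mod}_Q = \text{\rm Cap}_Q$ from \eqref{eq2.4-2308}, it suffices to show that any admissible function $u \in N^{1,Q}(A)$ with $u \equiv 1$ on $E$ and $u \equiv 0$ on $F$ satisfies $\int_A g_u^Q\,d\mu \geq \delta_0$. To achieve this, I would extend $u$ to a competitor $\wtil u$ for $\text{\rm Cap}_Q(E, F, X)$ by attaching ``capacitor caps'' in the hole $B(O, 1/C)$ and in the exterior $X \setminus B(O, C)$. These caps are built from the extremal functions for $\text{\rm Cap}_Q(B(O, 1/(KC)), X \setminus B(O, 1/C))$ and $\text{\rm Cap}_Q(B(O, C), X \setminus B(O, KC))$ for a large parameter $K$; by Theorem~\ref{thm2.1-0506}, each has $Q$-energy of order $(\log K)^{1-Q}$, which can be made at most $\delta_0/2$ by choosing $K$ large enough (depending on $C$). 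The resulting $\wtil u$ is then admissible for $\text{\rm Cap}_Q(E, F, X)$ since $E, F \subset A$, and
\[
2\delta_0 \leq \text{\rm Cap}_Q(E, F, X) \leq \int_X g_{\wtil u}^Q\,d\mu \leq \int_A g_u^Q\,d\mu + \delta_0,
\]
whence $\int_A g_u^Q\,d\mu \geq \delta_0$, and taking the infimum over $u$ yields $\text{\rm Mod}_Q(E, F, A) \geq \delta_0 =: \delta$.

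The main obstacle is constructing the extension $\wtil u$ so that its $Q$-energy on $X \setminus A$ is of the required order. Since the trace of $u$ on $\partial A$ is arbitrary and both $E$ and $F$ may come arbitrarily close to $\partial A$, naive extensions (by zero, a constant, or reflection) either fail to be admissible competitors or produce unbounded $Q$-energy on $X \setminus A$. The remedy is a careful Whitney-type or potential-theoretic gluing, using the annular chain property (Proposition~\ref{prop2.3}, which holds here by \cite[Theorem~3.3]{K07}) and the $Q$-Poincar\'e inequality to control the energy across $\partial A$ by the energy of $u$ on a collar neighborhood of $\partial A$, plus the capacitor energy bounded by $(\log K)^{1-Q}$. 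Executing this gluing is the technical core of the argument and is where I expect to spend the bulk of the work.
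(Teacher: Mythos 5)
The paper does not prove this statement at all: it is quoted verbatim from \cite[Section~3]{HK98}, so there is no internal argument for you to match. Judged on its own terms, your proposal is sound through the first two steps (the scale reduction, the bound $\Delta(E,F)\le 16C$, and the Loewner lower bound $\text{\rm Mod}_Q(E,F,X)\ge \phi_{X,Q}(16C)>0$), but the transfer step --- which you yourself flag as ``the technical core'' and do not execute --- is a genuine gap, and the mechanism you propose for it cannot work. The obstruction is that $E$ and $F$ are only assumed to lie in $B(O,Cr)\setminus B(O,r/C)$, so both may contain nondegenerate sub-continua sitting on (or arbitrarily near) the inner sphere $\partial B(O,r/C)$. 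In that case \emph{every} extension $\wtil u$ with $\wtil u|_E\equiv 1$ and $\wtil u|_F\equiv 0$ is forced, again by the Loewner property applied inside the hole $B(O,r/C)$, to carry $Q$-energy in the hole bounded below by a fixed positive constant, and that constant has no reason to be $\le\delta_0$. Concretely, in $\R^2$ with $r=1$, taking $E$ and $F$ to contain two opposite quarter-arcs of the circle $\{|x|=1/C\}$ forces $\int_{B(O,1/C)}g_{\wtil u}^2\,d\mu\approx 1$, whereas $\delta_0=\tfrac12\phi_{\R^2,2}(16C)\approx \pi/(2\log(16C))<1$; so the inequality $\int_Xg_{\wtil u}^Q\,d\mu\le\int_Ag_u^Qd\mu+\delta_0$ fails for every gluing, however careful. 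No Whitney or collar construction repairs this, because the loss is dictated by capacity inside the hole, not by the regularity of the trace of $u$ on $\partial A$.

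The curve-family version of your idea does work, but proves a weaker statement: writing $\Gamma(E,F,X)\subset\Gamma(E,F,A_K)\cup\Gamma_{\rm out}\cup\Gamma_{\rm in}$ with $A_K:=B(O,KCr)\setminus B(O,r/(KC))$, where $\Gamma_{\rm out}$ (resp.\ $\Gamma_{\rm in}$) consists of curves reaching $X\setminus B(O,KCr)$ (resp.\ $B(O,r/(KC))$), subadditivity of modulus and Theorem~\ref{thm2.1-0506} give $\text{\rm Mod}_Q(\Gamma_{\rm out})+\text{\rm Mod}_Q(\Gamma_{\rm in})\lesssim(\log K)^{1-Q}$, whence $\text{\rm Mod}_Q(E,F,A_K)\ge\phi_{X,Q}(16C)/2$ for $K$ large. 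Note this argument must be run with the \emph{enlarged} annulus: the family of curves that merely leave $A$ (without crossing a thick separating annulus) can have arbitrarily large modulus, so you cannot subtract it. Passing from $A_K$ back to $A$ itself requires an additional geometric input --- that the annuli $B(O,Cr)\setminus B(O,r/C)$ are uniformly linearly locally connected/uniform subdomains (via Proposition~\ref{prop2.3} and \cite{K07}) and hence themselves Loewner with constants independent of $r$ --- and this is exactly the content of \cite[Section~3]{HK98} that the theorem is citing. As written, your proof establishes neither this nor the extension it relies on.
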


\subsection{Polar coordinate systems}\label{polar} \ 

The following weak polar coordinate system was introduced in \cite{KN22}.
{
Let $\mathbb S$ be an abstract set of indices with a Radon probability measure $\sigma$ and a metric $d_{\mathbb S}$.}
Given a point $\mathcal O$, we  consider collections $\Gamma^{\mathcal O}(\mathbb S)$ of infinite curves with parameter set $\mathbb S$ starting from $\mathcal O$, namely
\[\Gamma^{\mathcal O}(\mathbb S)=\{\gamma^{\mathcal O}_\xi\in\Gamma^{+\infty}: \gamma_\xi^{\mathcal O}(0)=\mathcal O, \xi\in\mathbb S\}.
\]
We say that $X:=(X,d,\mu)$  has a   \textit{weak polar coordinate system} at the \textit{coordinate point} $\mathcal O$ if there is a choice of a pair $(\mathbb S, \Gamma^{\mathcal O}(\mathbb S))$  with a  Radon probability measure $\sigma$ on $\mathbb S$, a \textit{coordinate weight} $h:X\to\mathbb [0,{+\infty})$, and a finite constant $\mathcal C>0$ such that 
\begin{equation}\label{polar-0103}
\int_{\mathbb S}\int_{\gamma^{\mathcal O}_\xi}|f|\ h\ dsd\sigma\leq\mathcal C \int_{X}|f|d\mu \text{\rm \ \ for every integrable function $f$ on $(X,d,\mu)$.}
\end{equation}
Each infinite curve $\gamma_\xi^{\mathcal O}\in\Gamma^{\mathcal O}(\mathbb S)$ is called a   \textit{radial curve} with respect to $\xi\in\mathbb S$ (starting from $\mathcal O$). We refer the interested readers to \cite[Section 2.1]{KN22} for more discussion on general and concrete polar  coordinates.

By a slight abuse of notation, we will also denote by $\Gamma^{\mathcal O}(\mathbb S)$  the subset of  $X$ which is the union of all   radial curves. 
{
A collection of infinite curves is said to be  \emph{pairwise disjoint (at infinity)}  if there is a ball $B$ such that for any two distinct curves $\gamma, \gamma'$ in the collection, $(\gamma\cap\gamma' ) \setminus B=\emptyset$. Given a weak polar coordinate system $(\Gamma^{\mathcal O}(\mathbb S),\sigma, d_{\mathbb S}):=(\mathbb S,\Gamma^{\mathcal O}(\mathbb S), h, \sigma, d_{\mathbb S})$ at $\mathcal O$, 
a subfamily $\Gamma^{\mathcal O}(\mathbb S')$  of pairwise disjoint infinite curves with $\sigma(\mathbb S')>0$, where $\mathbb S'\subset\mathbb S$, is said to admit a \emph{dyadic Lipschitz projection (at infinity)}  if  there are finite constants $C>0, r_0>0$ such that   there is a  mapping $p: (\Gamma^{\mathcal O}(\mathbb S'), d)\to (\mathbb S', d_{\mathbb S})$, from $x\in\gamma_\xi^{\mathcal O}\in \Gamma^{\mathcal O}(\mathbb S')$ to the corresponding parameter $\xi\in\mathbb S'$,  being  $C/r$-Lipschitz on each  $\Gamma^{\mathcal O}(\mathbb S')\cap B(\mathcal O,r)\setminus B(\mathcal O,r/2)$ with $r>r_0$. 
For example, this hold for the Muckenhoupt-weighted space $\mathbb R^n$ where $1<n\in\mathbb N$.
}

  
\subsection{Hausdorff measures, thin sets, and thick sets}
\label{sec-thin}		\ 

Let $0\le \beta<{+\infty}$ and $0<R\leq{+\infty}$. {
Let $(X,d)$ be an arbitrary metric space. The {\it $(\beta,R)$-Hausdorff content}  of a subset $E$, denoted  $\mathcal H^\beta_{R, d}(E)$, is given by
			\begin{equation}
			\notag
			\mathcal H^\beta_{R,d}(E)=\inf\left\{\sum_{k\in\mathbb N} r_k^\beta: E\subset \bigcup_{k\in\mathbb N}B_k\text{\rm \ \ and \ \ } 0<r_k<R \right\}
			\end{equation}
	where $B_k$ is a ball with radius $r_k$. The {\it $\beta$-Hausdorff measure} $\mathcal H_d^\beta$ of a subset $E$ is 
			\[
				\mathcal H_{d}^{\beta}(E):=\lim_{R\to 0}\mathcal H^\beta_{R,d}(E).
			\]
For brevity, we will omit the metric $d$ when referring to  a given metric measure space $(X,d,\mu)$, and  denote by $\mathcal H^\beta_R:=\mathcal H^\beta_{R,d}$ and 	$\mathcal H^{\beta}:=\mathcal H_{d}^{\beta}$. In this paper, {
to emphasize the``spherical" metric space $(\mathbb S, d_{\mathbb S})$ defined in Section \ref{polar}, we denote the Hausdorff contents and Hausdorff measures defined with respect to this space} as $\mathcal H^\beta_{R, d_{\mathbb S}}$ and $\mathcal H^\beta_{d_{\mathbb S}}$, respectively.
 }
 
   If $\mu$ is Ahlfors $Q$-regular, $1<Q<+\infty$, then for arbitrary set $A\subset X$, $\mu(A)\simeq \mathcal H^Q(A)$.
   When the measure $\mu$ is only assumed to be doubling, and not necessarily Ahlfors $Q$-regular, it is more natural to define the  \emph{codimension $t$ Hausdorff content} of a subset $E$, for $0< t<+\infty$ and $0<R\le+\infty$, as follows:
   \[
   \Ha^{-t}_R(E)=\inf\left\{\sum_{k\in\N}\frac{\mu(B_k)}{r_k^t}:E\subset\bigcup_{k\in\N}B_k\quad\text{and}\quad 0<r_k<R\right\}.
   \]
   The \emph{codimension $t$ Hausdorff measure} of a subset $E$ is then defined by 
   \[
   \Ha^{-t}(E):=\lim_{R\to 0}\Ha^{-t}_R(E).
   \]
   If $\mu$ is  Ahflors $Q$-regular, then $\Ha^{-t}_R\simeq\Ha^{Q-t}_R$ and $\Ha^{-t}\simeq\Ha^{Q-t}$ for $0< t\le Q$.
   
	We have two following results, the first of which can be found in  \cite[Lemma 2.25]{HKM06}.  The second is obtained by connectedness of infinite curves, see \cite[Corollary 2.10.12]{Fe69}.
	\begin{lemma}\label{lem2.1-0805}
		Let $0<\beta<{+\infty}$, $0<R\leq {+\infty}$. Then for $E\subset X,$ $\mathcal H^{\beta}_{R}(E)=0$ if and only if  $\mathcal H^{\beta}(E)=0$.
	\end{lemma}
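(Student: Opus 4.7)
The plan is to prove the two directions of the biconditional separately, observing first that the map $R \mapsto \Ha^\beta_R(E)$ is monotone: if $0 < R_1 < R_2$, then any admissible cover in the definition of $\Ha^\beta_{R_1}(E)$ (balls of radius less than $R_1$) is also an admissible cover for $\Ha^\beta_{R_2}(E)$, so $\Ha^\beta_{R_2}(E) \leq \Ha^\beta_{R_1}(E)$. Consequently $\Ha^\beta(E) = \lim_{R \to 0} \Ha^\beta_R(E) = \sup_{R>0} \Ha^\beta_R(E)$, and in particular $\Ha^\beta_R(E) \leq \Ha^\beta(E)$ for every $R > 0$.

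The direction $\Ha^\beta(E) = 0 \Rightarrow \Ha^\beta_R(E) = 0$ is immediate from this inequality. For the converse, assume $\Ha^\beta_R(E) = 0$ for some fixed $R \in (0,+\infty]$. I will show that $\Ha^\beta_{R'}(E) = 0$ for every $R' > 0$ with $R' < R$, from which it follows by taking $R' \to 0$ that $\Ha^\beta(E) = 0$.

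Fix $R' \in (0, R)$ and $\varepsilon > 0$, and set $\varepsilon' := \min\{\varepsilon, (R')^\beta\}$. Since $\Ha^\beta_R(E) = 0$, we may select a cover $\{B_k\}_{k \in \N}$ of $E$ by balls of radii $r_k < R$ with $\sum_k r_k^\beta < \varepsilon'$. The key observation is that for each $k$,
\[
r_k^\beta \leq \sum_j r_j^\beta < \varepsilon' \leq (R')^\beta,
\]
so $r_k < R'$. Hence this cover is admissible in the definition of $\Ha^\beta_{R'}(E)$, yielding $\Ha^\beta_{R'}(E) \leq \sum_k r_k^\beta < \varepsilon' \leq \varepsilon$. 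Letting $\varepsilon \to 0$ gives $\Ha^\beta_{R'}(E) = 0$, as desired.

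There is no real obstacle here; the only step that requires mild care is the observation that controlling the sum $\sum r_k^\beta$ automatically forces each individual $r_k$ to be small, which is precisely what lets us shrink $R$ to $R'$ while staying in the same cover. The proof only requires $0 < \beta < \infty$ (to invert $t \mapsto t^\beta$) and uses no further structure of the space $X$.
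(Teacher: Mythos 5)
Your proof is correct; the paper does not prove this lemma itself but cites \cite[Lemma 2.25]{HKM06}, and your argument is the standard one given there: monotonicity of $R\mapsto\Ha^\beta_R(E)$ for one direction, and for the other the observation that a cover with $\sum_k r_k^\beta<\varepsilon'\le (R')^\beta$ automatically has all radii below $R'$.
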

 
\begin{lemma}\label{lem2.3-0505}
 	Let $+\infty>R>0$. Then	
 		$\mathcal H^1_{+\infty}(\gamma\bigcap B(O,2R)\setminus B(O,R))\geq R/2$ for each $\gamma\in\Gamma^{+\infty}$ such that $\gamma\bigcap B(O,R)\ne\varnothing$.
 	\end{lemma}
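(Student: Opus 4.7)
The guiding idea, following the paper's hint about connectedness, is to exploit the fact that the distance function $f(x):=d(O,x)$ is $1$-Lipschitz, so it maps balls in $X$ into short intervals in $\mathbb{R}$, while the curve $\gamma$ is forced by connectedness to run through the full range $[R,2R]$ on the annulus.

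First, I would extract a subcurve of $\gamma$ whose image under $f$ covers $[R,2R]$. Pick $t_0$ with $\gamma(t_0)\in B(O,R)$, so $f(\gamma(t_0))<R$; since $\gamma$ is an infinite curve, it eventually leaves $B(O,2R)$, so there is $t_1>t_0$ with $f(\gamma(t_1))\ge 2R$. By continuity of $f\circ\gamma$, we may set
\[
s_1:=\sup\{t\in[t_0,t_1]:f(\gamma(t))=R\},\qquad s_2:=\inf\{t\in[s_1,t_1]:f(\gamma(t))=2R\}.
\]
Then $\gamma|_{[s_1,s_2]}\subset \overline{B(O,2R)}\setminus B(O,R)$ and, by the intermediate value theorem applied to $f\circ\gamma$, the image $f(\gamma([s_1,s_2]))$ contains the interval $[R,2R]$.

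Next, I would relate any admissible cover of $\gamma\cap(B(O,2R)\setminus B(O,R))$ to a cover of $[R,2R]$ by intervals. Let $\{B_k\}_{k\in\mathbb{N}}$ be any cover of $\gamma\cap(B(O,2R)\setminus B(O,R))$ by balls with radii $r_k<+\infty$. Since $\gamma((s_1,s_2))$ is a subset of $\gamma\cap(B(O,2R)\setminus B(O,R))$, the family $\{B_k\}$ covers $\gamma((s_1,s_2))$, and therefore $\{f(\gamma\cap B_k)\}_{k\in\mathbb{N}}$ covers $f(\gamma((s_1,s_2)))\supseteq (R,2R)$. Because $f$ is $1$-Lipschitz, each set $f(\gamma\cap B_k)$ is contained in an interval of length at most $2r_k$. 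Hence
\[
R=\mathcal{L}^1((R,2R))\le \sum_{k\in\mathbb{N}}\mathcal{L}^1(f(\gamma\cap B_k))\le \sum_{k\in\mathbb{N}}2r_k,
\]
so $\sum_k r_k\ge R/2$. Taking the infimum over all admissible covers yields $\mathcal{H}^1_{+\infty}(\gamma\cap B(O,2R)\setminus B(O,R))\ge R/2$.

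The argument is essentially a one-line application of the Lipschitz projection to the real line; the only place where one must be careful is to guarantee that the chosen subcurve of $\gamma$ is genuinely connected and its image under $f$ really sweeps through $[R,2R]$, which is handled by the choice of the times $s_1<s_2$ above. I do not expect a substantive obstacle.
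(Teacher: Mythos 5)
Your proof is correct, but it follows a different mechanism from the paper's. The paper extracts a connected subcurve $\gamma_R$ of $\gamma$ joining $B(O,R)$ to $X\setminus B(O,2R)$, and then, given a cover $\{B_k\}$, uses connectedness to reorder the balls into a chain in which each new ball meets the union of the previous ones; this yields $R\le\diam\bigl(\bigcup_k B_k\bigr)\le\sum_k\diam(B_k)\le 2\sum_k r_k$. You instead push the problem to the real line via the $1$-Lipschitz function $f=d(O,\cdot)$: the intermediate value theorem forces $f(\gamma([s_1,s_2]))\supseteq[R,2R]$, each $f(B_k)$ sits in an interval of length at most $2r_k$, and countable subadditivity of Lebesgue outer measure gives $R\le 2\sum_k r_k$. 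The two arguments are of comparable length, but yours sidesteps the relabeling step entirely (which for a countably infinite cover requires a small amount of care to justify), at the cost of the bookkeeping with $s_1,s_2$ needed to guarantee that the chosen subarc stays in the annulus while its $f$-image sweeps all of $[R,2R]$; you handle that bookkeeping correctly, and the minor points you gloss over (that $\gamma$ exits $B(O,2R)$ at some time \emph{after} $t_0$, which follows since $\gamma$ restricted to any compact parameter interval is rectifiable and hence bounded) are easily filled in. Both proofs are instances of the standard fact $\mathcal H^1_{+\infty}(E)\gtrsim\diam(E)$ for connected $E$; yours realizes it through the principle that Lipschitz maps do not increase Hausdorff content, the paper's through a direct diameter estimate.
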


  \begin{lemma}\label{lem:Measure-Content Bound}
  Let $1<Q<+\infty$ and let $1\le p\le Q$.  Suppose that $(X,d,\mu)$ is Ahlfors $Q$-regular, and for $+\infty>\lambda\ge 1$, we denote $A_{\lambda 2^{j+1}}:=B(O,\lambda 2^{j+2})\setminus B(O,\lambda 2^{j+1})$.  Then for $G\subset A_{\lambda 2^{j+1}}$, we have that 
  \[
  \frac{\mu(G)}{(\lambda 2^{j+1})^p}\lesssim\frac{\Ha^{Q-(p-\alpha)}_{+\infty}(G)}{(\lambda 2^{j+1})^\alpha}\quad\text{for all }0<\alpha<p.
  \]      
  \end{lemma}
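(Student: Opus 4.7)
\emph{Proof proposal.} Write $R := \lambda 2^{j+1}$ and set $\beta := Q - (p-\alpha)$. Since $0 < \alpha < p$, we have $\beta \in (Q-p,Q)$ and, crucially, $Q - \beta = p - \alpha > 0$. After multiplying through by $R^p$ the claimed inequality is equivalent to
\[
\mu(G) \lesssim R^{Q-\beta}\,\Ha^{\beta}_{+\infty}(G),
\]
so the plan is to fix an arbitrary cover of $G$ by balls $\{B_k\}_{k\in\N}$ with radii $r_k$, bound $\mu(G)$ by $R^{Q-\beta}\sum_k r_k^\beta$ up to a multiplicative constant, and then take the infimum.

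Given such a cover, I will split into two cases according to the size of the radii. If every $r_k \le R$, then Ahlfors $Q$-regularity gives $\mu(B_k) \lesssim r_k^Q = r_k^{Q-\beta}\, r_k^\beta \le R^{Q-\beta}\, r_k^\beta$, and summing yields
\[
\mu(G) \le \sum_{k\in\N}\mu(B_k) \lesssim R^{Q-\beta}\sum_{k\in\N}r_k^\beta.
\]
If on the other hand some radius $r_{k_0}$ satisfies $r_{k_0} > R$, then the sum dominates a single large term: $\sum_k r_k^\beta \ge R^\beta$. Since $G \subset A_R \subset B(O,2R)$, another application of Ahlfors $Q$-regularity gives $\mu(G) \le \mu(B(O,2R)) \lesssim R^Q = R^{Q-\beta}\cdot R^\beta \le R^{Q-\beta}\sum_k r_k^\beta$. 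Either way we obtain the inequality $\mu(G) \lesssim R^{Q-\beta}\sum_k r_k^\beta$, and taking the infimum over all admissible covers of $G$ yields the result.

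The only mildly delicate step is the second case: when a covering ball is larger than $R$, the pointwise estimate $r_k^Q \le R^{Q-\beta}\,r_k^\beta$ fails because $Q-\beta > 0$. Bypassing this obstacle is the whole content of the proof, and it uses the hypothesis $G \subset A_R$ crucially: since $G$ sits inside an annulus of diameter $\lesssim R$, the measure $\mu(G)$ is already controlled by $R^Q$ without reference to the cover, so we only need the cover to contribute the factor $R^\beta$, which follows trivially from the existence of a single ball of radius larger than $R$.
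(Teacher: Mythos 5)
Your argument is correct and is essentially the paper's proof: the same case split on whether all covering radii are at most $\lambda 2^{j+1}$ or some radius exceeds it, with Ahlfors regularity applied to the balls in the first case and to the ambient annulus in the second, followed by taking the infimum over covers. The only differences are notational (your $R$, $\beta$ versus the paper's explicit exponents).
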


  \begin{proof}
    Consider a cover $\{B(x_k,r_k)\}_{k\in\mathbb N}$ of $G$.  We have that
		\[
		\frac{\mu(G)}{(\lambda 2^{j+1})^p}\leq \frac{\sum_{k\in\mathbb N} \mu(B(x_k,r_k))}{(\lambda 2^{j+1})^p}  \simeq \frac{\sum_{k\in\mathbb N} r_k^Q}{(\lambda 2^{j+1})^p}  = \frac{1}{(\lambda 2^{j+1})^\alpha} \sum_{k\in\mathbb N} \frac{r_k^{p-\alpha}}{(\lambda 2^{j+1})^{p-\alpha}} r_k^{Q-(p-\alpha)}.
		\]
		If $0<r_k<\lambda 2^{j+1}$ for all $k\in\mathbb N,$ then $r_k^{p-\alpha}< (\lambda 2^{j+1})^{p-\alpha}$ for all $0<\alpha<p$. Hence 
		\[
		\frac{\mu(G)}{(\lambda 2^{j+1})^p} \lesssim \frac{1}{(\lambda 2^{j+1})^\alpha} \sum_{k\in\mathbb N} r_k^{Q-(p-\alpha)} \text{\rm \ \ for all } 0<\alpha<p.
		\]
		If there is an index $k_0$ such that  $r_{k_0}\ge \lambda 2^{j+1}$, then  the Ahlfors $Q$-regular properties give that
		\[
		\frac{\mu(G)}{(\lambda 2^{j+1})^p} \leq \frac{\mu(A_{\lambda 2^{j+1}})}{(\lambda 2^{j+1})^p}\simeq\frac{(\lambda 2^{j+1})^{Q-(p-\alpha)}}{(\lambda 2^{j+1})^\alpha} \le\frac{r_{k_0}^{Q-(p-\alpha)}}{(\lambda 2^{j+1})^\alpha}\leq\frac{1}{ (\lambda 2^{j+1})^\alpha}\sum_{k\in\mathbb N}r_k^{Q-(p-\alpha)}\text{\rm \ \ for all } 0<\alpha<p.
        \]
		In both cases, we obtain the desired inequality, since $\{B(x_k,r_k)\}_{k\in\N}$ is arbitrary. 
  \end{proof}
We obtain the following, by the same argument as in the proof of the previous lemma:
\begin{lemma}\label{lem:Radius-Content}
Let $1<Q<+\infty$, and suppose that $(X,d,\mu)$ is Ahlfors $Q$-regular.  Let $0<\alpha<Q$ and $+\infty>\lambda>1$.  Then for all $+\infty>r>0$, we have that 
\[
\Ha^\alpha_{+\infty}(B(O,\lambda r)\setminus B(O,r/\lambda))\gtrsim r^\alpha,
\]
with comparison constant depending only on $\lambda$ and the Ahlfors $Q$-regularity constant.
\end{lemma}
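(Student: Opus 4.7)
The plan is to mirror the argument in the proof of Lemma~\ref{lem:Measure-Content Bound}, now viewing Ahlfors regularity as an upper bound for sums over coverings and extracting a lower bound on the Hausdorff content. Write $A := B(O,\lambda r)\setminus B(O,r/\lambda)$, fix an arbitrary countable cover $\{B(x_k, r_k)\}_{k\in\N}$ of $A$ by balls (with no restriction on the $r_k$, since we are computing $\Ha^\alpha_{+\infty}$), and aim to show $\sum_{k\in\N} r_k^\alpha \gtrsim r^\alpha$ with a constant depending only on $\lambda$ and the Ahlfors regularity constant. Taking the infimum over such covers then yields the conclusion.

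I would split into two cases according to the sizes of the radii $r_k$, precisely as in the preceding lemma. In the first case, suppose $r_k \leq r$ for every $k\in\N$. Factoring $r_k^Q = r_k^{Q-\alpha} \cdot r_k^\alpha$ and using $r_k^{Q-\alpha} \leq r^{Q-\alpha}$ (which is valid since $\alpha < Q$), the upper Ahlfors regularity bound yields
\[
\mu(A) \;\leq\; \sum_{k\in\N} \mu(B(x_k, r_k)) \;\lesssim\; \sum_{k\in\N} r_k^Q \;\leq\; r^{Q-\alpha} \sum_{k\in\N} r_k^\alpha.
\]
Since $\lambda > 1$, Ahlfors regularity also supplies the lower bound $\mu(A) \gtrsim r^Q$ with a constant depending on $\lambda$ (this is the same estimate used in the second case of the previous lemma for the dyadic annulus $A_{\lambda 2^{j+1}}$). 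Combining yields $\sum_{k\in\N} r_k^\alpha \gtrsim r^\alpha$. In the second case, some index $k_0$ satisfies $r_{k_0} > r$, and then $\sum_{k\in\N} r_k^\alpha \geq r_{k_0}^\alpha > r^\alpha$ trivially.

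The only delicate point, which is also used (without comment) in the previous lemma, is the lower measure estimate $\mu(A) \gtrsim r^Q$ for the thick annulus $A$; this is the standard consequence of Ahlfors $Q$-regularity that annuli with bounded outer-to-inner radius ratio have full measure at their scale, with constant degenerating as $\lambda\searrow 1$. Once this is in hand, the remainder is the same algebraic manipulation as in Lemma~\ref{lem:Measure-Content Bound}, so there is no substantive new obstacle.
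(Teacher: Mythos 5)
Your proof is correct and is essentially the paper's own: the paper establishes this lemma only by remarking that it follows ``by the same argument as in the proof of the previous lemma,'' which is precisely the two-case covering argument you reproduce (all $r_k\le r$ versus some $r_{k_0}>r$). You are also right that the one delicate point is the lower bound $\mu(B(O,\lambda r)\setminus B(O,r/\lambda))\gtrsim_\lambda r^Q$; be aware that for $\lambda$ close to $1$ this is \emph{not} a consequence of Ahlfors $Q$-regularity alone (in a disconnected regular space such as a self-similar Cantor set the annulus can even be empty), but it does hold in every setting where the paper applies the lemma, since there $X$ is unbounded and connected, so one can find $y$ with $d(O,y)=r$ and place the ball $B(y,(1-1/\lambda)r)$ inside the annulus.
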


  For $j\in\N$ and $+\infty>C\ge 1$, let us denote $A_{C 2^{j+1}}:=B(O,C2^{j+2})\setminus B(O,C2^{j+1})$. 
	 We now introduce a notion of thin sets in metric measure spaces. 
	 \begin{definition}\label{def-thin-0905} Let $1<Q<{+\infty}$, and let $1\le p\le Q$.
 A subset $E\subset X$ is said to be \emph{$p$-thin} (or \emph{dyadic $p$-thin at infinity}) if there is a finite constant $C\geq 1$ such that
 	\begin{equation}\label{eq2.4-2805}
		\lim_{k\to{+\infty}}\sum_{j=k}^{+\infty} \frac{\mathcal H^{Q-(p-\alpha)}_{+\infty}(E\bigcap A_{C 2^{j+1}})}{(C 2^{j+1})^\alpha}=0 \text{\rm \ \ for\ all\ $0<\alpha<p$}.
	 \end{equation}
  The finite constant $C\geq 1$ is called the thin constant.  
\end{definition}
When $p=Q$, we will often drop the dependence on $Q$, referring to $Q$-thin sets simply as \emph{thin} sets.

We give some properties of thin sets as below.

	 \begin{lemma} \label{prop2.5-0605}
	 Let  $1<Q<{+\infty}$, and let $1\le p\le Q$. Suppose that $(X,d,\mu)$ is Ahlfors $Q$-regular, and suppose that $E\subset X$ is a $p$-thin set with a thin constant $\lambda\geq 1$. Then the following statements hold:
	 	\begin{enumerate}[label=\arabic*.]
			\item 
   There is an index $j_0\in\mathbb N$ such that  $A_{\lambda 2^{j+1}}\setminus E\ne\varnothing$ for all $j\geq j_0$. 
            \item
We have that
				\begin{equation}\label{eq2.4-0306}
		\lim_{k\to{+\infty}}\sum_{j=k}^{+\infty} \frac{\mu(E\bigcap A_{\lambda 2^{j+1}})}{(\lambda 2^{j+1})^p}=0.
				\end{equation}
			\item  
	{
	  If $X$ has a  weak polar coordinate system $(\Gamma^{\mathcal O}(\mathbb S), \sigma, d_{\mathbb S})$ such that it has a subfamily $\Gamma^{\mathcal O}(\mathbb S')$, where $\mathbb S'\subset\mathbb S$, of pairwise disjoint infinite curves with $\sigma(\mathbb S')>0$  admitting  a dyadic Lipschitz projection $p$ defined as in Section \ref{polar}, then
				\[
					\lim_{k\to{+\infty}}\sum_{j=k}^{+\infty} \mathcal H^{Q-(p-\alpha)}_{+\infty, d_{\mathbb S}} (p(\Gamma^{\mathcal O}(\mathbb S')\cap E\cap A_{\lambda 2^{j+1}}))=0 {\rm \ \ for \ all\ } 0<\alpha<p.
				\]
	}
			\item Let $E$ and $F$ be two $p$-thin sets. Then $E\cup F$ is $p$-thin.
 
		\end{enumerate}
	 \end{lemma}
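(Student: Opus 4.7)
The plan is to unpack the definition \eqref{eq2.4-2805} of $p$-thinness and combine it with the Hausdorff content estimates in Lemmas~\ref{lem:Measure-Content Bound} and \ref{lem:Radius-Content}. Properties (1) and (2) follow essentially directly from these two lemmas, while (3) requires the Lipschitz projection bound and (4) requires first checking invariance of thinness under the choice of the constant $\lambda$.

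For (1), I would argue by contradiction: if $A_{\lambda 2^{j_n+1}}\subset E$ along some sequence $j_n\to\infty$, fix any $0<\alpha<p$. Since $Q\ge p$ forces $Q-(p-\alpha)\in(0,Q)$, Lemma~\ref{lem:Radius-Content} gives
\[
\mathcal H^{Q-(p-\alpha)}_{+\infty}(E\cap A_{\lambda 2^{j_n+1}})\gtrsim (\lambda 2^{j_n+1})^{Q-(p-\alpha)},
\]
so the corresponding term in the thinness series is bounded below by $(\lambda 2^{j_n+1})^{Q-p}$. This is a positive constant if $p=Q$ and blows up if $p<Q$, contradicting \eqref{eq2.4-2805}. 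For (2), I apply Lemma~\ref{lem:Measure-Content Bound} with $G:=E\cap A_{\lambda 2^{j+1}}$ to obtain, for each $0<\alpha<p$,
\[
\frac{\mu(E\cap A_{\lambda 2^{j+1}})}{(\lambda 2^{j+1})^p}\lesssim \frac{\mathcal H^{Q-(p-\alpha)}_{+\infty}(E\cap A_{\lambda 2^{j+1}})}{(\lambda 2^{j+1})^\alpha},
\]
and \eqref{eq2.4-0306} follows by summing over $j\ge k$ and letting $k\to\infty$ in the thinness hypothesis.

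For (3), the dyadic Lipschitz projection is $\simeq(\lambda 2^{j+1})^{-1}$-Lipschitz on any annulus about $\mathcal O$ containing $A_{\lambda 2^{j+1}}$, provided $j$ is large enough that the shift $d(O,\mathcal O)$ is negligible. Pushing forward any cover $\{B(x_k,r_k)\}$ of $E\cap A_{\lambda 2^{j+1}}$ yields a cover of $p(E\cap A_{\lambda 2^{j+1}})$ by balls of radii $\lesssim r_k/(\lambda 2^{j+1})$, so taking infima and using $Q-(p-\alpha)\ge\alpha$ (a consequence of $Q\ge p$),
\[
\mathcal H^{Q-(p-\alpha)}_{+\infty}\bigl(p(E\cap A_{\lambda 2^{j+1}})\bigr)\lesssim \frac{\mathcal H^{Q-(p-\alpha)}_{+\infty}(E\cap A_{\lambda 2^{j+1}})}{(\lambda 2^{j+1})^{Q-(p-\alpha)}}\le \frac{\mathcal H^{Q-(p-\alpha)}_{+\infty}(E\cap A_{\lambda 2^{j+1}})}{(\lambda 2^{j+1})^\alpha}
\]
for $j$ large, and summing over $j\ge k$ reduces to the thinness of $E$ at exponent $\alpha$. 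For (4), I first verify that $E$ thin with constant $\lambda_E$ remains thin with any $\lambda\ge\lambda_E$: writing $\lambda_E 2^m\le \lambda<\lambda_E 2^{m+1}$, each annulus $A_{\lambda 2^{j+1}}$ sits inside $A_{\lambda_E 2^{j+m+1}}\cup A_{\lambda_E 2^{j+m+2}}$, so the $\lambda$-series is controlled by a shifted copy of the $\lambda_E$-series. Taking $\lambda:=\max(\lambda_E,\lambda_F)$ as a common thin constant and using subadditivity of $\mathcal H^{Q-(p-\alpha)}_{+\infty}$ on $(E\cup F)\cap A_{\lambda 2^{j+1}}$ finishes the argument.

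The main technical obstacle is the geometric compatibility in step (3): the annuli $A_{\lambda 2^{j+1}}$ appearing in the thinness definition are centered at $O$, whereas the Lipschitz bound for $p$ holds on annuli of the form $B(\mathcal O,r)\setminus B(\mathcal O,r/2)$ about $\mathcal O$. One must cover $A_{\lambda 2^{j+1}}$ by a bounded number of such Lipschitz annuli about $\mathcal O$ to apply the projection estimate piecewise; once $j$ is large enough that $d(O,\mathcal O)\ll \lambda 2^{j+1}$, this covers the annulus with a universally bounded number of pieces and the constants are absorbed into $\lesssim$. All other manipulations are straightforward bookkeeping on the Hausdorff content series.
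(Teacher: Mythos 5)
Your proof is correct and follows essentially the same route as the paper: (2) is Lemma~\ref{lem:Measure-Content Bound} applied to $G=E\cap A_{\lambda 2^{j+1}}$, (1) is the same contradiction via the lower bound $(\lambda 2^{j+1})^{Q-p}\ge 1$ (the paper gets it from (2) and Ahlfors regularity rather than from Lemma~\ref{lem:Radius-Content}, but it is the same estimate), (3) is the same Lipschitz push-forward of covers combined with $Q-(p-\alpha)\ge\alpha$, and (4) is subadditivity of the content. The only substantive differences are that you explicitly handle the base-point mismatch between $O$ and $\mathcal O$ in (3) and the possibly different thin constants in (4), both points the paper's proof silently glosses over; your treatment of each is valid.
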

	 \begin{proof} 
	 \begin{enumerate}[label=\arabic*.]
		\item Suppose that for any $j_0\in\mathbb N$, there is $j\geq j_0$ such that $A_{\lambda 2^{j+1}}\setminus E=\varnothing$.  By Ahlfors $Q$-regularity, we have that 
			\[
			 \frac{\mu(E\bigcap A_{\lambda 2^{j+1}})}{(\lambda 2^{j+1})^p} = \frac{\mu(A_{\lambda 2^{j+1}})}{(\lambda 2^{j+1})^p}\simeq (\lambda 2^{j+1})^{Q-p}\ge1
			\] 
		 which is a contradiction to \eqref{eq2.4-0306}, proved below.
   
        \item The claim follows from Lemma~\ref{lem:Measure-Content Bound} and Definition~\ref{def-thin-0905}. 

		\item 
		{
		Since $p$ is $\frac{C}{\lambda 2^{j+1}}$-Lipschitz on $A_{\lambda 2^{j+1}}\cap \Gamma^{\mathcal O}(\mathbb S')$, we have 
  \[
  \mathcal H^{Q-(p-\alpha)}_{+\infty, d_{\mathbb S}} (p(\Gamma^{\mathcal O}(\mathbb S')\cap E\cap A_{\lambda 2^{j+1}}))\leq \left(\frac{C}{\lambda 2^{j+1}}\right)^{Q-(p-\alpha)} \mathcal H^{Q-(p-\alpha)}_{+\infty}(\Gamma^{\mathcal O}(\mathbb S')\cap E\cap A_{\lambda 2^{j+1}}),
  \]
  see for instance the proof of Theorem 28.4 in \cite{Y15}, and hence the claim follows by  \eqref{eq2.4-2805}.
  }
		\item The claim follows from the subadditivity property of  $\mathcal H^{Q-(p-\alpha)}_{+\infty}$.\qedhere
	\end{enumerate}
	 \end{proof}

\begin{definition}\label{def:ThickSets}
Given a point $O\in X$, an unbounded subset $F\subset X$ is said to be \emph{thick} (or \emph{dyadic $Q$-thick at infinity}) if for every $0<\alpha<Q$,  there are finite  constants $\delta>0$,  $k \in\mathbb N$, and $C\geq 1$ such that
 \begin{equation}\label{thick}
 	\mathcal H^{\alpha}_{+\infty}(F\bigcap A_{C2^{j+1}}) \geq \delta (2^{j+1})^{\alpha}  \text{\rm \ \    for all $j\geq k$. 
	}
 \end{equation}
Here we recall $A_{C2^{j+1}}:=B(O,C2^{j+2})\setminus B(O,C2^{j+1})$.
\end{definition}

\begin{definition}\label{def:almost-thick}
 We say that $F$ is  {\it almost thick} if for every $0<\alpha<Q$, there are a finite sequence $\delta_j>0,$ a number $k\in\mathbb N$, and a constant $C\geq 1$ 
 such that 
 \begin{equation}
 \label{almost-thick}
 \mathcal H^{\alpha}_{+\infty} (F\cap A_{C2^{j+1}}) \geq \delta_j (2^{j+1})^\alpha \text{\rm \ \ for all $j\geq k$.
 }
 \end{equation}
\end{definition}
Clearly, if $F$ is thick then $F$ is almost thick. 

\subsection{Lebesgue points}\label{lebesgue}\ 

Let $f$ be a locally integrable function on a metric measure space $(X,d,\mu)$. Given $0\leq \beta<{+\infty}$ and $0<R\leq {+\infty}$, the {\it $(\beta,R)$-fractional maximal function} of $f$ at $x\in X$, denoted  $\mathcal M_{\beta,R}f(x)$, is defined by 
\[
\mathcal M_{\beta,R} f(x):=\sup_{0<r<R}r^{\beta} \dashint_{B(x,r)}|f|d\mu.
\]

Using standard covering lemmas, the following theorem is obtained in \cite{HKi98}.
\begin{theorem}[{\cite[Lemma 2.6]{HKi98}}]\label{fractional-maximal-theorem}
Suppose that $(X,d,\mu)$ is a metric measure space with metric $d$ and Ahlfors $Q$-regular measure $\mu$ where $1<Q<+\infty$.
 Let $A\subset X$ be a bounded set with $\mu(A)>0$ and the diameter ${\rm diam}(A)$ of $A$ is strictly positive. Then for $\overline\lambda >0,$
	\[
	\mathcal H^{Q-\beta}_{+\infty}\left(\left\{x\in A:  \mathcal M_{\beta,\text{\rm diam}(A)}f(x)>\overline\lambda \right\}\right)\leq 5^{Q-\beta} \frac{2^Q\text{\rm diam}^Q(A)}{\mu(A)}\frac{1}{\overline\lambda} \int_X|f|d\mu \text{\rm \ \ for all $0<\beta<Q$}
	\] and for every  integrable function $f$ on $(X,d,\mu)$.
\end{theorem}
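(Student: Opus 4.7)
The plan is to execute the classical weak-type Vitali covering argument for the fractional maximal operator, adapted here to bound Hausdorff content rather than measure. First, set $E_{\bar\lambda}:=\{x\in A:\mathcal M_{\beta,\diam(A)}f(x)>\bar\lambda\}$. For each $x\in E_{\bar\lambda}$, the definition of $\mathcal M_{\beta,\diam(A)}f$ allows me to pick a radius $r_x\in(0,\diam(A))$ with $r_x^\beta\dashint_{B(x,r_x)}|f|\,d\mu>\bar\lambda$, which rearranges to $\mu(B(x,r_x))<\bar\lambda^{-1}r_x^\beta\int_{B(x,r_x)}|f|\,d\mu$. Since the radii $r_x$ are uniformly bounded by $\diam(A)$, the basic $5B$-covering lemma (valid in any metric space without further assumptions on $\mu$) extracts a pairwise disjoint subfamily $\{B(x_i,r_i)\}_{i\in I}$ with $E_{\bar\lambda}\subset\bigcup_i B(x_i,5r_i)$. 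As $5r_i<5\diam(A)<+\infty$, the definition of Hausdorff content then gives
\[
\mathcal H^{Q-\beta}_{+\infty}(E_{\bar\lambda})\le \sum_i (5r_i)^{Q-\beta}=5^{Q-\beta}\sum_i r_i^{Q-\beta}.
\]

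The quantitative step is to bound each $r_i^{Q-\beta}$ in terms of $\int_{B(x_i,r_i)}|f|\,d\mu$, and this is where Ahlfors regularity enters. Fixing any $x_0\in A$, the triangle inequality together with $r_i<\diam(A)$ and $x_i\in A$ yields $B(x_i,r_i)\subset B(x_0,2\diam(A))$ and $A\subset B(x_0,2\diam(A))$. Applying Ahlfors $Q$-regularity to both these balls produces the relative volume comparison
\[
r_i^Q\le\frac{2^Q\diam(A)^Q}{\mu(A)}\,\mu(B(x_i,r_i)),
\]
and combining with the witness inequality $\mu(B(x_i,r_i))<\bar\lambda^{-1}r_i^\beta\int_{B(x_i,r_i)}|f|\,d\mu$ gives
\[
r_i^{Q-\beta}=\frac{r_i^Q}{r_i^\beta}< \frac{2^Q\diam(A)^Q}{\mu(A)\,\bar\lambda}\int_{B(x_i,r_i)}|f|\,d\mu.
\]
Summing over $i$ and using the disjointness of the balls $B(x_i,r_i)$ then produces $\sum_i r_i^{Q-\beta}\le \frac{2^Q\diam(A)^Q}{\mu(A)\,\bar\lambda}\int_X|f|\,d\mu$, which together with the Hausdorff-content bound above yields the claim.

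The only real subtlety I expect is packaging Ahlfors regularity into the relative form $r_i^Q\lesssim (\diam(A)^Q/\mu(A))\,\mu(B(x_i,r_i))$ with the sharp exponent $Q$ on the ratio $\diam(A)/r_i$: a naive iterated doubling argument would give an exponent $\log_2 C_d$ rather than the exact exponent $Q$, so I plan to exploit the upper and lower Ahlfors bounds directly on $B(x_0,2\diam(A))$ and $B(x_i,r_i)$ rather than passing through doubling. Everything else is standard once the witness radii have been selected and the $5B$-covering has been applied.
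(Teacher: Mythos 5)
The paper does not prove this statement; it imports it from Haj\l asz--Kinnunen \cite{HKi98} with the remark that it follows from ``standard covering lemmas,'' and your argument is exactly that standard argument (witness radii, the basic $5B$-covering lemma for families of uniformly bounded radii, disjointness, and the definition of $\mathcal H^{Q-\beta}_{+\infty}$). The structure is correct and complete.

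One caveat, which you half-anticipated: your ``relative volume comparison'' $r_i^Q\le \frac{2^Q\diam(A)^Q}{\mu(A)}\mu(B(x_i,r_i))$ is not a consequence of Ahlfors regularity alone. What regularity gives is $r_i^Q\le C_Q\,\mu(B(x_i,r_i))$ and $\mu(A)\le\mu(B(x_0,2\diam(A)))\le C_Q 2^Q\diam(A)^Q$, and combining these yields
\[
r_i^Q\le C_Q^2\,\frac{2^Q\diam(A)^Q}{\mu(A)}\,\mu(B(x_i,r_i)),
\]
i.e.\ an extra factor $C_Q^2$, where $C_Q\ge 1$ is the Ahlfors regularity constant. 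Indeed the ratio $r^Q/\mu(B(x,r))$ can be as large as $C_Q$ for the small ball while $(2\diam A)^Q/\mu(A)$ can be as small as $1/C_Q$, so the inequality as you wrote it can fail by a factor of $C_Q^2$ when $C_Q>1$; you get the sharp exponent $Q$ but not the literal constant $2^Q$ appearing in the statement. This is harmless for the paper, which only ever invokes the lemma through $\lesssim$ with constants depending on the data (see the uses in Lemmas~\ref{lem3.2} and \ref{thm3.3-2605}), but your final bound should read $5^{Q-\beta}C_Q^2\,\frac{2^Q\diam^Q(A)}{\mu(A)}\,\frac{1}{\overline\lambda}\int_X|f|\,d\mu$ rather than the constant-free form claimed.
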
	 

For a locally integrable function $u$, we say that $x\in X$ is a \emph{Lebesgue point} of $u$ if $u(x)\in\mathbb R$ and 
\[
u(x)=\lim_{r\to 0}\dashint_{B(x,r)}u\,d\mu.
\]
We define $N_u$ to be the set of non-Lebesgue points in $X$.  The following Lebesgue differentiation theorem holds, see for instance \cite[Page 77]{pekka}:
 \begin{theorem}\label{thm2.3-2809}
 Let $\mu$ be doubling. Then $\mu(N_u)=0$ for all locally integrable functions $u$ on $X$.
 \end{theorem}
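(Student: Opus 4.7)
The plan is the standard route via the Hardy--Littlewood maximal function, which adapts to this setting because the doubling property yields a $5r$-covering (Vitali-type) lemma. First I would define
\[
Lu(x):=\limsup_{r\to 0}\left|\dashint_{B(x,r)}u\,d\mu - u(x)\right|,
\]
and note that $x\in N_u$ precisely when $Lu(x)>0$ (allowing the value $+\infty$ when the averages fail to converge). By exhausting $X$ with a countable family of balls $B_0$ and truncating $u$ outside a slightly larger ball, it suffices to show $\mu(\{Lu>\lambda\}\cap B_0)=0$ for every $\lambda>0$ and every fixed ball $B_0$; in particular I may assume $u\in L^1(X,\mu)$.

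The next step is to approximate. Since $\mu$ is doubling on a metric space and, by the standing conventions of the paper, Borel regular and locally finite, $\mu$ is Radon and continuous compactly supported functions are dense in $L^1(X,\mu)$. Given $\varepsilon>0$, pick a continuous $v$ with $\|u-v\|_{L^1}<\varepsilon$. Continuity gives $Lv\equiv 0$, and subadditivity of $\limsup$ together with the triangle inequality yield
\[
Lu(x)\;\le\; L(u-v)(x)\;\le\; 2\,\mathcal M(u-v)(x),
\]
where $\mathcal M$ is the uncentered Hardy--Littlewood maximal operator (comparable to the centered one by doubling).

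The key quantitative input is the weak-type $(1,1)$ inequality $\mu(\{\mathcal Mf>\lambda\})\le C\lambda^{-1}\|f\|_{L^1}$. In this metric setting it is proved by extracting a disjoint subfamily from $\{B(x,r_x):\mathcal Mf(x)>\lambda\}$ via the $5r$-covering lemma; the bound $\mu(5B)\lesssim\mu(B)$ supplied by doubling is exactly what converts the disjoint packing into the claimed estimate on the measure of the level set. Applying this with $f=u-v$ at threshold $\lambda/2$ gives $\mu(\{Lu>\lambda\})\le 2C\varepsilon/\lambda$. Sending $\varepsilon\to 0$ forces $\mu(\{Lu>\lambda\})=0$ for each $\lambda>0$, and a countable union over $\lambda=1/n$ yields $\mu(N_u)=0$.

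The main obstacle, if any, is the density of continuous functions in $L^1(X,\mu)$, for which one invokes Borel regularity (so that $\mu$ is Radon on the separable metric space $X$) and a standard Urysohn/partition-of-unity construction. Everything else is a routine transcription of the Euclidean argument, with the $5r$-covering lemma playing the role that Besicovitch does in $\mathbb{R}^n$.
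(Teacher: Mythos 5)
Your proposal is the standard maximal-function proof of the Lebesgue differentiation theorem for doubling measures; the paper gives no proof of its own and simply cites \cite[p.~77]{pekka}, where essentially this argument appears, so you are on the same route as the source. One step is stated too strongly: the pointwise inequality $L(u-v)(x)\le 2\,\mathcal M(u-v)(x)$ is not true in general, since the correct bound is $L(u-v)(x)\le \mathcal M(u-v)(x)+|(u-v)(x)|$ and the pointwise value $|(u-v)(x)|$ need not be dominated by $\mathcal M(u-v)(x)$ (it can be made arbitrarily large on a null set without changing the maximal function). The repair is the usual one and costs a line: from $\{Lu>\lambda\}\subset\{\mathcal M(u-v)>\lambda/2\}\cup\{|u-v|>\lambda/2\}$, control the first set by the weak-type $(1,1)$ bound and the second by Chebyshev, giving $\mu(\{Lu>\lambda\})\le (2C+2)\varepsilon/\lambda$, after which your conclusion goes through unchanged.
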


 For Sobolev functions, however, this result can be improved:
 

 \begin{proposition}\label{thm2.4-2809}  
 Let $1<Q<+\infty$ and let $1\le p<+\infty$.  Suppose that $(X,d,\mu)$ is a complete, unbounded metric measure space, with  an Ahlfors $Q$-regular measure $\mu$ supporting a $p$-Poincar\'e inequality. If $p>Q$, then $N_u$ is empty for all $u\in\dot N^{1,p}(X)$.  If $1\le p\le Q$, then for every function $u\in\dot N^{1,p}(X)$, we have that $\mathcal H^{Q-(p-\alpha)}(N_u)=0$ for all $0<\alpha<p$.
 \end{proposition}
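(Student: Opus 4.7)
The case $p>Q$ is immediate from the Morrey embedding (Theorem~\ref{thm:Morrey}): applied on any ball $B\subset X$ containing given points $x,y\in X$, it yields the pointwise estimate
\[
|u(x)-u(y)|\lesssim \rad(B)^{Q/p}d(x,y)^{1-Q/p}\left(\dashint_{4\lambda B}g_u^p\,d\mu\right)^{1/p},
\]
so every representative of $u\in\dot N^{1,p}(X)$ is locally H\"older continuous on $X$. Continuous functions have every point as a Lebesgue point, and therefore $N_u=\varnothing$.

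For $1\le p\le Q$, fix $\alpha\in(0,p)$. The plan is to combine a dyadic telescoping Poincar\'e argument with the fractional maximal estimate of Theorem~\ref{fractional-maximal-theorem}. Setting $r_k=2^{-k}R$ and estimating $|u_{B(x,r_k)}-u_{B(x,r_{k+1})}|$ through $\dashint_{B(x,r_k)}|u-u_{B(x,r_k)}|\,d\mu$ (using Ahlfors regularity to absorb constants between the two scales), the $p$-Poincar\'e inequality yields the key dyadic bound
\[
\left|u_{B(x,r_k)}-u_{B(x,r_{k+1})}\right|\lesssim r_k^{\alpha/p}\left(r_k^{p-\alpha}\dashint_{B(x,\lambda r_k)}g_u^p\,d\mu\right)^{1/p}.
\]
Thus, at every $x$ for which $\lim_{r\to 0}r^{p-\alpha}\dashint_{B(x,\lambda r)}g_u^p\,d\mu=0$, the factors $r_k^{\alpha/p}$ sum geometrically and the tail tends to $0$, so $\{u_{B(x,r_k)}\}$ is Cauchy, and a short interpolation between dyadic scales gives existence of $\bar u(x):=\lim_{r\to 0}u_{B(x,r)}$.

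To estimate the exceptional set, I fix a bounded set $A\subset X$ (exhausting $X$ by such sets and using $\sigma$-subadditivity of Hausdorff content) and decompose $g_u^p=f_M+h_M$ with $f_M=g_u^p\chi_{\{g_u^p\le M\}}$ bounded and $\|h_M\|_{L^1(X)}\to 0$ as $M\to\infty$. Since $p-\alpha>0$, the bounded piece contributes $r^{p-\alpha}\dashint_{B(x,r)}f_M\,d\mu\le Mr^{p-\alpha}\to 0$, whereas the remainder is dominated pointwise by $\mathcal M_{p-\alpha,\diam(A)}(h_M)(x)$. Theorem~\ref{fractional-maximal-theorem} then gives
\[
\Ha^{Q-(p-\alpha)}_{+\infty}\!\left(\left\{x\in A:\mathcal M_{p-\alpha,\diam(A)}(h_M)(x)>\eta\right\}\right)\lesssim \eta^{-1}\|h_M\|_{L^1(X)},
\]
and choosing, say, $\eta=\|h_M\|_{L^1}^{1/2}$ and letting $M\to\infty$ drives the bad set to zero $(Q-(p-\alpha))$-Hausdorff content, hence zero $\Ha^{Q-(p-\alpha)}$-measure by Lemma~\ref{lem2.1-0805}. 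Consequently $\bar u$ exists outside a set of zero $\Ha^{Q-(p-\alpha)}$-measure.

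The main remaining obstacle is identifying $\bar u$ with the given representative $u$ off a $\Ha^{Q-(p-\alpha)}$-null set, since the Lebesgue differentiation theorem (Theorem~\ref{thm2.3-2809}) only provides $\bar u=u$ $\mu$-a.e. I would bridge this by applying the previous steps to the quasicontinuous representative $\tilde u$ of the Sobolev class, which yields $\Ha^{Q-(p-\alpha)}(N_{\tilde u})=0$, and then observing that any other Newtonian representative differs from $\tilde u$ on a set of $p$-capacity zero. The modulus/capacity identity \eqref{eq2.4-2308} together with the annular capacity estimate of Theorem~\ref{thm2.1-0506} (applied through a standard covering argument) then implies that any $p$-capacity zero set has zero $(Q-p+\alpha)=(Q-(p-\alpha))$-Hausdorff measure for every $\alpha\in(0,p)$, which completes the argument for arbitrary representatives. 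The technical heart of the proof is the decomposition $g_u^p=f_M+h_M$ combined with the content-level maximal inequality; the capacity-to-Hausdorff comparison in the final step is where the subtle dependence on the chosen representative is absorbed.
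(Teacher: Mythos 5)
Your treatment of $p>Q$ matches the paper's (Morrey embedding, hence local H\"older continuity, hence $N_u=\varnothing$). For $1\le p\le Q$, your content-level argument — telescoping the Poincar\'e inequality over dyadic radii, splitting $g_u^p$ into a bounded piece and a piece of small $L^1$-norm, and invoking Theorem~\ref{fractional-maximal-theorem} — correctly shows that $\lim_{r\to 0}u_{B(x,r)}$ exists for all $x$ outside a set of zero $\Ha^{Q-(p-\alpha)}_{+\infty}$-content. This is genuinely different from the paper's route, which does no direct estimation at all: it localizes with cutoffs, cites the capacity-level Lebesgue point theorems (\cite{KKST08} for $p=1$; \cite{BB15} together with the Keith--Zhong self-improvement \cite{KZ08} for $1<p\le Q$) to get $C_p(N_u)=0$, and then converts capacity-null to Hausdorff-null via \cite{HK10} and \cite{GKS23}.

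However, your bridge from ``the limit of averages exists'' to ``$x$ is a Lebesgue point of $u$'' has a genuine gap. You assert that applying the previous steps to the quasicontinuous representative $\tilde u$ yields $\Ha^{Q-(p-\alpha)}(N_{\tilde u})=0$, but those steps involve only the ball averages $u_{B(x,r)}$, which are the same for every representative; they say nothing about whether $\tilde u(x)$ coincides with $\lim_{r\to 0}u_{B(x,r)}$. That identification — that the quasicontinuous Newtonian representative agrees quasi-everywhere with the limit of its averages — is precisely the nontrivial content of the Lebesgue point theorem the paper imports from \cite{KKST08} and \cite{BB15}; quasicontinuity alone does not deliver it, since continuity of $\tilde u$ off an open set of small capacity does not control the averages over balls that meet that open set, and $\mu$-a.e.\ equality (Theorem~\ref{thm2.3-2809}) is too weak to exclude a set of positive $\Ha^{Q-(p-\alpha)}$-measure. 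To close the gap you must either cite those results anyway — at which point your content argument is redundant — or upgrade your estimates to show that $u_{B(\cdot,r_k)}\to\bar u$ uniformly off sets of small capacity, conclude that $\bar u$ is quasicontinuous, and then use that two quasicontinuous functions agreeing $\mu$-a.e.\ agree $C_p$-q.e. Your final step, that $C_p$-null sets are $\Ha^{Q-(p-\alpha)}$-null, is a correct standard fact and coincides with the comparison the paper cites.
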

 
\begin{proof}
If $p>Q$, then by Theorem~\ref{thm:Morrey}, we have that every $u\in\dot N^{1,p}(X)$ is locally H\"older continuous, and so every $x\in X$ is a Lebesgue point of $u$.  

Suppose $1\le p\le Q$.  We first consider the case when $p=1$. Let $u\in \dot N^{1,1}(X)$.  For each $n\in\N$, consider a Lipschitz cutoff function $\eta_n$ supported on $B(O,2n)$ such that $\eta_n\equiv 1$ on $B(O,n)$.  Then $\eta_n u\in N^{1,1}(X)$, and so by \cite[Theorem~4.1, Remark~4.7]{KKST08}, we have that $C_1(N_u\cap B(0,n))=0$.  By subadditivity of the Sobolev $p$-capacity, it follows that $C_1(N_u)=0$. By \cite[Theorem~4.3, Theorem~5.1]{HK10}, it follows that $\Ha^{-1}(N_u)=0$, where $\Ha^{-1}$ is the codimension $1$ Hausdorff  measure, and so by Ahlfors $Q$-regularity we have that $\Ha^{Q-1}(N_u)=0$.

Now consider the case that $1<p\le Q$.  Since $X$ necessarily supports a $q$-Poincar\'e inequality for some $1\le q<p$ by \cite{KZ08}, it follows from \cite[Theorem~5.6.2]{BB15} that $C_p(N_u)=0$ for each $u\in\dot N^{1,p}(X)$.  Here we have also used the same argument as above involving the Lipschitz cutoff functions $\eta_n$.  By \cite[Proposition~3.11]{GKS23}, we then have that $\Ha^{-(p-\alpha)}(N_u)=0$ for all $0<\alpha<p$, where $\Ha^{-(p-\alpha)}$ is the codimension $p-\alpha$ Hausdorff  measure. By Ahlfors $Q$-regularity, we obtain $\Ha^{Q-(p-\alpha)}(N_u)=0$ for all $0<\alpha<p$.
 \qedhere

\end{proof}

Let us recall that  $+\infty>\lambda\geq 1$ is the scaling factor of the $p$-Poincar\'e inequality and we denote  $A_{\lambda 2^{j+1}}:= B(O,\lambda 2^{j+2})\setminus B(O,\lambda 2^{j+1})$.  Given $+\infty>C\ge 1$, we denote $C\cdot A_{\lambda 2^{j+1}}:=B(O, C\lambda 2^{j+3})\setminus B(O, 2^{j}/(\lambda C))$.  We conclude this section by giving two useful lemmas  below. We refer interested readers to \cite{HK98,KN22} for discussions on various versions of these lemmas.

\begin{lemma}\label{lem3.1}
Let  $1<Q<+\infty$ and let $1\le p\le Q$. 
Suppose that $(X,d,\mu)$ is an unbounded metric measure space with metric $d$ and Ahlfors $Q$-regular measure $\mu$ supporting a $p$-Poincar\'e inequality.
	Let  $0<a<+\infty$ and $j\in\mathbb N$. We assume  that $u\in  \dot N^{1,p}(X)$ with an upper gradient $\rho_u\in L^p_\mu(X)$ and assume that $X$ has the annular chain property. Suppose that
	 $E,F$ are two subsets of $A_{\lambda 2^{j+1}}$ such that $|u(x)-u(y)|\geq a$ for all $x\in E, y\in F$, and that
		$|u(x)-u_{B(x,2^j)}|\leq {a}/{5}$, $|u(y)-u_{B(y,2^{j})}|\leq {a}/{5}$ for some $x\in E$, $y\in F$.
		Then 
			\begin{equation}
			\label{eq3.3-1907} 
			1 \lesssim  a^{-p}\int_{c_2\cdot A_{\lambda 2^{j+1}}}\rho_u^pd\mu,
			\end{equation}
   where $c_2\ge 1$ is the finite constant from Definition~\ref{dyadic}.
\end{lemma}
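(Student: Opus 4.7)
The plan is to combine the three hypotheses via the triangle inequality with a standard chaining argument based on the annular chain property. First, a direct application of the triangle inequality gives
\[
|u_{B(x,2^j)} - u_{B(y,2^j)}|\ge |u(x)-u(y)| - |u(x)-u_{B(x,2^j)}| - |u(y)-u_{B(y,2^j)}|\ge \tfrac{3a}{5}.
\]
Hence it suffices to bound $|u_{B(x,2^j)} - u_{B(y,2^j)}|$ from above by a constant times $2^j\bigl((2^j)^{-Q}\int_{c_2\cdot A_{\lambda 2^{j+1}}}\rho_u^p\,d\mu\bigr)^{1/p}$.

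To this end, I apply the annular $\lambda'$-chain property at $O$ to the annulus $A_{\lambda 2^{j+1}}=B(O,\lambda 2^{j+2})\setminus B(O,\lambda 2^{j+1})$, calibrating $\lambda'\ge 1$ (depending only on $\lambda$ and $c_1$) so that the chain balls have radius exactly $2^j$, i.e.\ $\lambda'=4\lambda/c_1$ (or $\lambda'=1$ with minor modifications in the borderline case). This yields balls $B_1,\ldots,B_k$ with $B_1=B(x,2^j)$, $B_k=B(y,2^j)$, $k\le M$, each $B_i\subset c_2\cdot A_{\lambda 2^{j+1}}$ in the enlarged-annulus notation introduced just before the lemma, and overlap balls $D_i\subset B_i\cap B_{i+1}$ whose radii are comparable to $2^j$. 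Ahlfors $Q$-regularity therefore gives $\mu(B_i)/\mu(D_i)\simeq 1$ with constants depending only on the data.

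Telescoping along the chain and inserting $u_{D_i}$ between successive averages,
\[
|u_{B(x,2^j)}-u_{B(y,2^j)}|\le \sum_{i=1}^{k-1}\bigl(|u_{B_i}-u_{D_i}|+|u_{D_i}-u_{B_{i+1}}|\bigr)\lesssim \sum_{i=1}^{k}\dashint_{B_i}|u-u_{B_i}|\,d\mu,
\]
where I use the standard bound $|u_{B_i}-u_{D_i}|\le(\mu(B_i)/\mu(D_i))\dashint_{B_i}|u-u_{B_i}|d\mu$ together with $\mu(B_i)\simeq\mu(D_i)$. Applying the $p$-Poincar\'e inequality on each $B_i$ and using $\lambda B_i\subset c_2\cdot A_{\lambda 2^{j+1}}$ (after absorbing the enlargement into the constant $c_2$), together with $\mu(\lambda B_i)\simeq (2^j)^Q$, yields
\[
\dashint_{B_i}|u-u_{B_i}|d\mu\lesssim 2^j\left(\dashint_{\lambda B_i}\rho_u^p\,d\mu\right)^{1/p}\lesssim 2^j\left((2^j)^{-Q}\int_{c_2\cdot A_{\lambda 2^{j+1}}}\rho_u^p\,d\mu\right)^{1/p}.
\]
Summing the $k\le M$ terms, raising to the $p$-th power, and using $2^{j(p-Q)}\le 1$ since $p\le Q$ and $j\ge 0$ gives $a^p\lesssim \int_{c_2\cdot A_{\lambda 2^{j+1}}}\rho_u^p\,d\mu$, which is \eqref{eq3.3-1907}.

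The main technical point I expect is the calibration of the chain parameter $\lambda'$ so that the chain balls match the scale $2^j$, together with the verification that each $\lambda B_i$ lies inside $c_2\cdot A_{\lambda 2^{j+1}}$ in the custom notation of the paper. The restriction $p\le Q$ enters the argument only at the very end, to absorb the factor $2^{j(p-Q)}$; the same chain yields $a^p\lesssim 2^{j(p-Q)}\int_{c_2\cdot A_{\lambda 2^{j+1}}}\rho_u^p\,d\mu$ for every $p$, but this is only useful when $p\le Q$.
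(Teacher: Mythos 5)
Your proposal is correct and follows essentially the same route as the paper's proof: triangle inequality to reduce to $|u_{B(x,2^j)}-u_{B(y,2^j)}|\gtrsim a$, the annular chain property to connect the two balls, a telescoping estimate with the $p$-Poincar\'e inequality on each chain ball, and Ahlfors $Q$-regularity together with $p\le Q$ to absorb the factor $2^{j(p-Q)}$. The only cosmetic difference is that the paper pigeonholes a single chain ball where the bound concentrates, while you sum all $k\le M$ terms before raising to the $p$-th power; both are equivalent since $M$ is a uniform constant.
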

\begin{proof}Fixed $x\in E,y\in F$ such that $|u(x)-u_{B(x,2^j)}|\leq {a}/{5}$, $|u(y)-u_{B(y,2^{j})}|\leq {a}/{5}$, 
we have that
	$
		a\leq |u(x)-u(y)|\leq {a}/{5} + |u_{B(x,2^j)}-u_{B(y,2^j)}|+{a}/{5}.
		$
	Hence
		$
		a\lesssim |u_{B(x,2^j)}-u_{B(y,2^j)}|.
		$
	Let $\{B_i\}_{i=1}^M$  be as the sequence of  finite-chain balls connecting $B(x,2^j)$ and $B(y,2^j)$  where $M$ independent of $j,\lambda$ as in Definition \ref{dyadic}.
	Since $\mu$ is Ahlfors $Q$-regular and supports a $p$-Poincar\'e inequality on $(X,d)$, it follows that 
		\begin{align*}
		a&\lesssim |u_{B(x,2^j)}- u_{B(y,2^j)}|
		\lesssim \sum_{i=1}^M \dashint_{B_i}|u-u_{B_i}|d\mu \lesssim \sum_{i=1}^M 2^j \left (\dashint_{\lambda B_i}\rho_u^pd\mu\right )^{\frac{1}{p}}.
		\end{align*}
	Hence there is an index $i$  such that 
		$a\lesssim 2^j \left (\dashint_{\lambda B_i}\rho_u^pd\mu\right )^{\frac{1}{p}}.
		$
		 By Ahlfors $Q$-regularity and the fact that  $\lambda B_i\subset c_2\cdot A_{\lambda 2^{j+1}}$ for all $1\le i\le M$, it follows that 
   \[
   1\lesssim a^{-p}2^{jp}\dashint_{\lambda B_i}\rho_u^pd\mu\lesssim a^{-p}\int_{c_2\cdot A_{\lambda 2^{j+1}}}\rho_u^pd\mu,
   \]
   which completes the proof.
\end{proof}

\begin{lemma}\label{lem3.2}
Let  $1<Q<+\infty$ and let $1\le p\le Q$. 
Suppose that $(X,d,\mu)$ is a complete, unbounded metric measure space with metric $d$ and Ahlfors $Q$-regular measure $\mu$ supporting a $p$-Poincar\'e inequality.
Let  $0<a<+\infty$ and $j\in \mathbb N$, and let $+\infty>C\ge 1$. We assume that $u\in\dot N^{1,p}(X)$ with an upper gradient $\rho_u\in L^p_\mu(X)$. Suppose that
	 $E$ is a subset of $A_{\lambda 2^{j+1}}$ such that 
	$|u(x)-u_{B(x,2^j)}|> {a}/{5}
	$ for all $x\in E$. Then for every $0<\alpha<p$,
	 \begin{equation}\label{eq3.6}
	 \mathcal{H}^{Q-(p-\alpha)}_{{+\infty}}(E)\lesssim  \frac{1}{(1-2^{-\frac{\alpha}{p}})^p} (2^{j})^\alpha a^{-p}\int_{C\cdot A_{\lambda 2^{j+1}}}\rho_u^{p}d\mu.
	 \end{equation}
\end{lemma}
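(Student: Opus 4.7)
The plan is to reduce the estimate to the weak-type bound for the fractional maximal function (Theorem~\ref{fractional-maximal-theorem}) via the standard telescoping argument based on the $p$-Poincar\'e inequality. Fix $0<\alpha<p$ and set $\beta:=p-\alpha\in(0,p)$. My first step is to show that at every Lebesgue point $x\in X$ of $u$,
\[
|u(x)-u_{B(x,2^j)}|\lesssim (2^j)^{\alpha/p}\bigl(\mathcal M_{\beta,\,4\lambda 2^j}\rho_u^{p}(x)\bigr)^{1/p}.
\]
To prove this, I would write the usual telescoping sum
\[
|u(x)-u_{B(x,2^j)}|\le\sum_{k=0}^{\infty}|u_{B(x,2^{j-k})}-u_{B(x,2^{j-k-1})}|\lesssim\sum_{k=0}^{\infty}2^{j-k}\left(\dashint_{B(x,2\lambda 2^{j-k})}\rho_u^{p}\,d\mu\right)^{1/p},
\]
where the second inequality combines the doubling property with the $p$-Poincar\'e inequality applied to each pair of concentric balls. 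Factoring $2^{j-k}=(2^{j-k})^{\beta/p}\cdot(2^{j-k})^{\alpha/p}$ and using the definition of $\mathcal M_{\beta,R}$ on each term, then summing the resulting geometric series $\sum_k 2^{-k\alpha/p}$ (which converges since $\alpha>0$), yields the displayed pointwise bound.

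In the second step, I would apply this pointwise inequality to the set $E$. Since $|u(x)-u_{B(x,2^j)}|>a/5$ on $E$, every Lebesgue point $x\in E$ satisfies
\[
\mathcal M_{\beta,\,4\lambda 2^j}\rho_u^{p}(x)\gtrsim (2^j)^{-\alpha}a^{p}.
\]
Choose $C$ large enough that $B(x,4\lambda 2^j)\subset C\cdot A_{\lambda 2^{j+1}}$ for every $x\in A_{\lambda 2^{j+1}}$, so that the fractional maximal function at $x$ is unchanged if we replace $\rho_u^p$ by $\rho_u^p\mathbf 1_{C\cdot A_{\lambda 2^{j+1}}}$. Applied to $A:=A_{\lambda 2^{j+1}}$, which by Ahlfors $Q$-regularity satisfies $\mu(A)\simeq (2^j)^Q$ and $\diam(A)\simeq 2^j\gtrsim 4\lambda 2^j/\text{const}$, Theorem~\ref{fractional-maximal-theorem} with threshold $\overline\lambda\simeq (2^j)^{-\alpha}a^{p}$ gives
\[
\mathcal H^{Q-\beta}_{+\infty}\bigl(E\cap\{x:\mathcal M_{\beta,\diam(A)}(\rho_u^{p}\mathbf 1_{C\cdot A_{\lambda 2^{j+1}}})(x)>\overline\lambda\}\bigr)\lesssim (2^j)^{\alpha}a^{-p}\int_{C\cdot A_{\lambda 2^{j+1}}}\rho_u^{p}\,d\mu,
\]
which is exactly the desired inequality \eqref{eq3.6}, up to handling the non-Lebesgue points.

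Finally, to conclude I would remove the Lebesgue-point assumption using Proposition~\ref{thm2.4-2809}: the set $N_u$ of non-Lebesgue points satisfies $\mathcal H^{Q-(p-\alpha)}(N_u)=0$, hence also $\mathcal H^{Q-(p-\alpha)}_{+\infty}(N_u)=0$ by Lemma~\ref{lem2.1-0805}, so replacing $E$ by $E\setminus N_u$ does not affect the left-hand side. I expect the main technical hurdle to be tracking the scales carefully so that the ball $B(x,4\lambda 2^j)$ used in the telescoping lies inside the dilated annulus $C\cdot A_{\lambda 2^{j+1}}$ and so that the radius $4\lambda 2^j$ fits under $\diam(A)$ as required by the hypothesis of Theorem~\ref{fractional-maximal-theorem}; both are achieved by choosing $C\ge 1$ sufficiently large depending only on $\lambda$.
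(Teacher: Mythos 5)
Your proof is correct and follows essentially the same route as the paper: telescoping over the balls $B(x,2^{j-i})$ with the $p$-Poincar\'e inequality to get the pointwise bound $a\lesssim (2^j)^{\alpha/p}\bigl(\mathcal M_{p-\alpha,\lambda 2^j}\rho_u^p(x)\bigr)^{1/p}$ on $E$, then applying Theorem~\ref{fractional-maximal-theorem} to $\rho_u^p$ restricted to $C\cdot A_{\lambda 2^{j+1}}$, with non-Lebesgue points discarded via Proposition~\ref{thm2.4-2809}. The one adjustment worth making is to use the exact Poincar\'e dilation $\lambda B_i=B(x,\lambda 2^{j-i})$, so that the maximal function radius is $\lambda 2^j$ rather than $4\lambda 2^j$; these balls already lie inside $C\cdot A_{\lambda 2^{j+1}}$ for every $C\ge 1$, which yields the estimate for all $C\ge 1$ as stated, whereas your final paragraph only delivers it for $C$ sufficiently large (and since the right-hand side shrinks as $C$ decreases, the large-$C$ case does not imply the small-$C$ case).
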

\begin{proof}Let $x\in E$. We denote $B_i:=B(x, 2^{j-i})$ where $i\in\mathbb N$. 
By Proposition \ref{thm2.4-2809}, we may assume that  every $x\in E$ is a Lebesgue point of $u$. Since  $\mu$ is Ahlfors $Q$-regular and supports a $p$-Poincar\'e inequality, we have that for all $x\in E$,
		\begin{align}
		 \frac{a}{5}< &|u(x)-u_{B(x,2^j)}|\leq \sum_{i=0}^{+\infty} |u_{B_i}-u_{B_{i+1}}|
		\lesssim  \sum_{i=0}^{+\infty} \dashint_{B_{i}}|u-u_{B_i}|d\mu \lesssim \sum_{i=0}^{+\infty} 2^{j-i}\left (\dashint_{\lambda B_i}\rho_u^pd\mu \right )^{1/p}.
		\notag
		\end{align}
	Then
		\[
			a\lesssim \sum_{i=0}^{+\infty} 2^{(j-i)\alpha/p}\left(2^{(j-i)(p-\alpha)}\dashint_{\lambda B_i}\rho_u^pd\mu\right)^{1/p} \lesssim  \frac{1}{1-2^{-\frac{\alpha}{p}}} 2^{j\alpha/p} \mathcal M^{1/p}_{p-\alpha,\lambda 2^j}\rho_u^p(x) \text{\rm \ \ for all $x\in E$.
			}
		\] 
		Hence, we have that 
		  $(1-2^{-\frac{\alpha}{p}})^pa^p2^{-j\alpha}\lesssim \mathcal M_{p-\alpha,\lambda 2^j}\rho_u^p(x)$ for all $x\in E$. By Theorem \ref{fractional-maximal-theorem} applied to  the zero extension $\rho_u$ to $C\cdot A_{\lambda 2^{j+1}}$, 		we obtain that for all $0<\alpha<p$,
		\begin{align*}
		\mathcal H^{Q-(p-\alpha)}_{+\infty}(E)\leq & \mathcal H_{{+\infty}}^{Q-(p-\alpha)}(\left\{ x\in A_{\lambda 2^{j+1}}:  {(1-2^{-\frac{\alpha}{p}})^p}a^p2^{-j\alpha}  \lesssim  \mathcal M_{p-\alpha,\lambda 2^j}\rho_u^p(x)\right\}) \\ 
		\lesssim &  \frac{1}{(1-2^{-\frac{\alpha}{p}})^p}a^{-p} 2^{j\alpha} \int_{C\cdot A_{\lambda 2^{j+1}}}\rho_u^pd\mu
		\end{align*}
		which is \eqref{eq3.6}.
	  This completes the proof.
\end{proof}

\section{Proofs of Theorems~\ref{thm:p<Q Thin}, \ref{thm1.3-2208} and \ref{thm4-2705}}

Throughout this section, we always assume that $+\infty >\lambda\geq 1$ is
the scaling factor of the $p$-Poincar\'e inequality.  For $j\in\N$ and $+\infty>c\ge 1$, we set $A_{\lambda 2^{j+1}}:=B(O,\lambda2^{j+2})\setminus B(O,\lambda2^{j+1})$, and $c\cdot A_{\lambda 2^{j+1}}:=B(O,c\lambda 2^{j+3})\setminus B(O,2^{j}/(c\lambda))$. 
By telescoping-arguments, we obtain Proof of Theorem \ref{thm:p<Q Thin}, Lemma \ref{thm1.2-0505} and Lemma \ref{lem5.1-2308} as below. Our main results in this section are  Lemma \ref{lem5.2-2308} and Lemma \ref{lem5.3-0709}.

Let us begin with the following  analytic lemma, to be used later in this section:

\begin{lemma}
	\label{in-fact}Let $+\infty>\lambda\geq1$, $1\le p<{+\infty}$, and let $u:X\to\R$.  Suppose that there exist $F\subset X$ and $c_0\in[-\infty,+\infty]$ so that $\lim_{x\to+\infty,x\in F}u(x)=c_0$.  If $\{a_k\}_{k\in\N}$ is a sequence with $a_{k}\geq 0$ and such that $\sum_{k\in\mathbb N}a_k<{+\infty}$, then there exist $n_1\in\mathbb N$ and a non-increasing sequence $\{b_k\}_{k\in\mathbb N}$, with $b_k>0$ and $\lim_{k\to{+\infty}}b_k=0$, such that 
	\begin{equation}\label{eq3.1-2805}
	\sum_{k\in\mathbb N}a_kb_k^{-p}<{+\infty}
	\end{equation}
and such that for $k\geq n_1$,
	\[ \begin{cases}
	|u(x)-c_0|<b_k/2 \text{\rm \ \  for all $x\in  F\cap A_{\lambda 2^{k+1}}$} & \text{\rm \ \ if \ } c_0\in \mathbb R,\\
	\text{\rm $u(x)-\frac{1}{b_k}>b_{k}/2$ for all $x\in F\cap A_{\lambda 2^{k+1}}$} & \text{\rm \ \ if \ }c_0=+\infty,\\
	\text{\rm $u(x)+\frac{1}{b_k}<b_{k}/2$ for all $x\in F\cap A_{\lambda 2^{k+1}}$} & \text{\rm \ \ if \ }c_0=-\infty.
	\end{cases}
	\]
\end{lemma}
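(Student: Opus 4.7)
The plan is to construct $\{b_k\}$ as the pointwise maximum $b_k := \max(\tilde b_k, \hat b_k)$ of two auxiliary non-increasing positive null sequences: a \emph{summability pilot} $\hat b_k$ designed to make $\sum_k a_k \hat b_k^{-p}$ finite, and a \emph{pointwise pilot} $\tilde b_k$ designed to enforce the inequalities along $F \cap A_{\lambda 2^{k+1}}$. Since the maximum of two non-increasing positive null sequences is again non-increasing, positive, and null, the qualitative requirements on $\{b_k\}$ will be automatic, and the quantitative checks will reduce to comparisons with the two pilots.

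For the summability pilot I will use an Abel--Dini device. Setting $T_k := \sum_{j \geq k} a_j$ yields a non-increasing sequence with $T_k \to 0$; on indices where $T_k > 0$ I take $\hat b_k := T_k^{1/(2p)}$, and if $T_{k_0}=0$ for some $k_0$ (so all $a_j$ vanish beyond $k_0$) I extend $\hat b_k$ by any positive non-increasing null tail, whose contribution to the target sum is zero. The telescoping inequality $\sqrt{T_k}-\sqrt{T_{k+1}} \geq a_k/(2\sqrt{T_k})$ then yields
\[
\sum_{k\in\N}\frac{a_k}{\hat b_k^p} \;=\; \sum_{k\in\N}\frac{a_k}{\sqrt{T_k}} \;\leq\; 2\sqrt{T_0} \;<\; +\infty.
\]
For the pointwise pilot I treat the three cases of $c_0$ in parallel: when $c_0 \in \R$, I set $\tilde b_k := 3\,\omega(\lambda 2^{k+1}) + 2^{-k}$ with $\omega(R) := \sup\{|u(x)-c_0| : x \in F,\, |x| \geq R\}$, which is non-increasing in $R$ and tends to $0$; when $c_0 = +\infty$, I set $\tilde b_k := 4/M_k$ with $M_k := \inf\{u(x) : x \in F,\, |x| \geq \lambda 2^{k+1}\}$, which is finite, positive, and non-decreasing for $k$ large, with $M_k \to +\infty$; and the case $c_0 = -\infty$ is defined symmetrically using $m_k := -\sup\{u(x) : x\in F,\, |x|\geq\lambda 2^{k+1}\}$. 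In each case the pilot is positive, non-increasing, and null for all sufficiently large $k$, with finitely many initial terms adjusted if needed to preserve monotonicity.

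The subtle step, and the only real obstacle, will be verifying the infinite cases for $b_k$, because enlarging $\tilde b_k$ up to $b_k$ shrinks $1/b_k$ (helpful) but grows $b_k/2$ (potentially harmful). The slack factor $4$ in the definition of $\tilde b_k$ is what absorbs this: since $b_k \to 0$, we have $b_k \leq 1$ for $k$ large, hence for $c_0 = +\infty$ one computes on $F \cap A_{\lambda 2^{k+1}}$
\[
\frac{1}{b_k} + \frac{b_k}{2} \;\leq\; \frac{1}{\tilde b_k} + \frac{1}{2} \;=\; \frac{M_k}{4} + \frac{1}{2} \;<\; M_k \;\leq\; u(x),
\]
whenever $M_k > 2/3$, which holds for all $k$ past some $n_1$. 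The case $c_0 = -\infty$ is identical with signs reversed, the finite case is immediate from $b_k \geq \tilde b_k > 2\,\omega(\lambda 2^{k+1}) \geq 2|u(x)-c_0|$ on $F\cap A_{\lambda 2^{k+1}}$, and the summability bound $\sum a_k b_k^{-p} \leq \sum a_k \hat b_k^{-p} < +\infty$ is trivial from $b_k \geq \hat b_k$. This closes the plan.
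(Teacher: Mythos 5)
Your proof is correct, and it takes a somewhat different route from the paper's. The paper diagonalizes: for each $i\in\N$ it picks a threshold $n_i$ large enough that simultaneously $\sum_{k\ge n_i} i\,a_k<2^{-i}$ and the pointwise bounds hold with parameter $i^{-1/p}$, and then defines $b_k$ as the staircase $i^{-1/p}$ on $[n_i,n_{i+1})$, so that the two requirements are coupled through the single choice of $n_i$. You instead decouple them into two explicit pilots and take their maximum: the Abel--Dini choice $\hat b_k=T_k^{1/(2p)}$ with the telescoping bound $\sum_k a_kT_k^{-1/2}\le 2\sqrt{T_1}$ handles summability universally (depending only on $\{a_k\}$, not on $u$), while the modulus-of-convergence pilot $\tilde b_k$ handles the pointwise inequalities, with the factor $4$ correctly absorbing the fact that enlarging $b_k$ via the max worsens the $b_k/2$ term in the infinite cases. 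What your approach buys is an explicit, non-inductive formula for $b_k$ and a clean separation of the two constraints; what the paper's buys is brevity, since the single sequence $n_i$ does both jobs at once. The only loose ends in your write-up are routine: when $T_{k_0}=0$ the appended tail must be capped by $T_{k_0-1}^{1/(2p)}$ to preserve monotonicity, and $\omega(R)$ (resp.\ $M_k$) may be infinite (resp.\ nonpositive, or an infimum over an empty set) for small $k$, which your adjustment of finitely many initial terms already covers.
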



\begin{proof}
For each $i\in\N$, we have that $\sum_{k\in\N}ia_k<\infty$, and so there exists $n_i\in\N$ such that $\sum_{k=n_i}^{+\infty} ia_k<1/2^i$.  Furthermore, since $\lim_{x\to+\infty,x\in F}u(x)=c_0$, the $n_i$ can be chosen sufficiently large so that for all $k\ge n_i$, we have 
\[
		\begin{cases}
		\text{\rm  $|u(x)-c_0|<1/(2i^{1/p})$ for all $x\in  F\cap A_{\lambda 2^{k+1}}$} & \text{\rm \ \ if $c_0\in\mathbb R$},\\
		\text{\rm $u(x)-i^{1/p}>1/(2i^{1/p})$ for all $x\in  F\cap A_{\lambda 2^{k+1}}$} & \text{\rm \ \ if $c_0=+\infty$},\\ 
		\text{\rm $u(x)+i^{1/p}<1/(2i^{1/p})$ for all $x\in  F\cap A_{\lambda 2^{k+1}}$} & \text{\rm \ \ if $c_0=-\infty$}.
		\end{cases}
	\]
The $n_i$ can also be chosen to be a strictly increasing sequence.  For each $k\in\N$ such that $n_i\le k<n_{i+1}$, we set $b_k=1/i^{1/p}$, and so we can replace $1/(2i^{1/p})$ with $b_k/2$ and $i^{1/p}$ with $1/b_k$ in the previous display.  We have that the $b_k$ are strictly positive, non-increasing, and $\lim_{k\to\infty}b_k=0$. If $k<n_1$, we define $b_k=1$.  Finally,
\[
\sum_{k\in\N}a_kb_k^{-p}
\leq \sum_{k=1}^{n_1}a_k+\sum_{i\in\N}\sum_{k=n_i}^{n_{i+1}}ia_k<\sum_{k\in\mathbb N}a_k+\sum_{i\in\N}\frac{1}{2^i}<+\infty.\qedhere
\]
\end{proof}


We now prove Theorem~\ref{thm:p<Q Thin}.

\begin{proof}[Proof of Theorem~\ref{thm:p<Q Thin}]
    Let $u\in\dot N^{1,p}(X)$, and let $\rho_u$ be a $p$-integrable upper gradient of $u$. Since $(X,d,\mu)$ is complete, Ahlfors $Q$-regular, and satisfies a $p$-Poincar\'e inequality with $p<Q$, it follows from \cite[Theorem~3.3]{K07} that the annular chain property holds.  Therefore, we have from \cite{KN22}, that there exists a constant $c\in\R$ such that 
    \[    \lim_{t\to+\infty}u(\gamma(t))=c\quad\text{for $p$-a.e.\ }\gamma\in\Gamma^{+\infty}.
    \]
    Let $\Gamma_1$ denote the family of curves in $\Gamma^{+\infty}$ for which this holds.  As $p< Q$, it follows that $\Mod_p(\Gamma^{+\infty})>0$, see \cite[Theorem~1.2]{KN22} for example, and so there exists a finite constant $C_1>0$ such that 
    \begin{equation}\label{eq:25-8-23(6)}
    \Mod_p(\Gamma_1)\ge C_1>0.
    \end{equation}

    Fix $k\in\N$.  For each $j\in\N$, define
    \[
    \Gamma_{j,k}:=\{\gamma\in\Gamma_1:|u(x)-c|<2^{-k}\text{ for all }x\in\gamma\setminus B(O,\lambda2^j)\}.
    \]
    We then have that $\Gamma_1=\bigcup_{j\in\N}\Gamma_{j,k}$, and so by subadditivity of the $p$-modulus, 
    \[
    \sum_{j\in\mathbb N}2^{-j}C_1=C_1\leq {\rm Mod}_p(\Gamma_1)\leq\sum_{j\in\mathbb N}{\rm Mod}_p(\Gamma_{j,k}).
    \]
   Hence there exists $j_k\in\N$ such that 
    \begin{align}\label{eq:1-12-24,1}
        \Mod_p(\Gamma_{j_k,k})\ge 2^{-j_k}C_1>0.
    \end{align}
    Then for all $\gamma\in\Gamma_{j_k,k}$,
    \[
    |u(x)-c|<2^{-k} {\rm \ \ for\ all \ }x\in \gamma\setminus B(O,\lambda 2^j) \ {\rm and \ for \ }j\geq j_k.
    \]
Since $\rho_u\in L^p_\mu(X)$,    we also choose $i_k\ge j_k$ sufficiently large so that
    \begin{equation}\label{eq:(1-25)i_k decay}
        2^{j_k}2^{kp}\int_{X\setminus B(O,2^{i_k}/c_2)}\rho_u^p\,d\mu<2^{-k}.
    \end{equation}
    With sequences $\{i_k\}_{k\in\mathbb N}$ and $\{j_k\}_{k\in\mathbb N}$ now fixed, we define a sequence $\{a_l\}_{l\in\N}$ by setting $a_l=2^{-k}$ for $i_k\le l<i_{k+1}$.
    Then for each $k\in\N$, we have that   
    
    \begin{equation}\label{eq3.4-1401}
        |u(x)-c|<a_l \text{\rm \ \ for all $x\in \gamma\cap A_{\lambda 2^{l+1}}$, $i_k\leq l< i_{k+1}, \gamma\in \Gamma_{j_k,k}$}.
    \end{equation}    

     For each $k\in\N$ and $i_k\leq l<i_{k+1}$, we define 
    \[
    \text{$F_l:=\bigcup_{\gamma\in\Gamma_{j_k,k}}(\gamma\cap A_{\lambda 2^{l+1}})$ \ and\  }
    E_{l}:=\{x\in A_{\lambda 2^{l+1}}:|u(x)-c|>2a_l\}.
    \]
    We consider three cases.  Firstly, suppose that there exist $x\in E_{l}$ and $y\in F_l\cap A_{\lambda 2^{l+1}}$ such that $|u(x)-u_{B(x,2^l)}|\le a_l/5$ and $|u(y)-u_{B(y,2^l)}|\le a_l/5$.  Then, by Lemma~\ref{lem3.1}, it follows that
    \begin{equation}\label{eq:1-12-24,2}
        1\lesssim a_l^{-p}\int_{c_2\cdot A_{\lambda 2^{l+1}}}\rho_u^p\,d\mu=2^{kp}\int_{c_2\cdot A_{\lambda 2^{l+1}}}\rho_u^p\,d\mu.
    \end{equation}
    For the second case, suppose that $|u(y)-u_{B(y,2^l)}|\ge a_l/5$ for all $y\in F_l\cap A_{\lambda 2^{l+1}}$. Let $0<\alpha<p$. Then by Lemma~\ref{lem:Measure-Content Bound} and Lemma~\ref{lem3.2}, it follows that 
    \begin{equation*}
        \frac{\mu(F_l)}{(2^{l+1})^p}\lesssim\frac{\Ha^{Q-(p-\alpha)}_{+\infty}(F_l)}{(\lambda 2^{l+1})^\alpha}\lesssim \frac{1}{(1-2^{-\frac{\alpha}{p}})^p} 2^{kp}\int_{c_2\cdot A_{\lambda 2^{l+1}}}\rho_u^pd\mu.
    \end{equation*}
    Since we are considering $i_{k+1}\ge l\ge i_k\geq j_k$, we have that $\Mod_p(\Gamma_{j_k,k})\leq \mu(F_l)/(\lambda 2^{l+1})^p$ because for all measurable subsets $A\supset F_l$, $\chi_{A}/(\lambda 2^{l+1})$ is admissible for computing $\Mod_p(\Gamma_{j_k,k})$.  Hence, from \eqref{eq:1-12-24,1}, we obtain
    \begin{equation}\label{eq:1-12-24,3}
       C_1\lesssim \frac{1}{(1-2^{-\frac{\alpha}{p}})^p} 2^{j_k}2^{kp}\int_{c_2\cdot A_{\lambda 2^{l+1}}}\rho_u^pd\mu.
    \end{equation}
    For the third case, suppose that $|u(x)-u_{B(x,2^l)}|\ge a_l/5$ for all $x\in E_{l}$.  Then by Lemma~\ref{lem3.2}, we have that
    \begin{equation}\label{eq:1-12-24,4}
        \frac{\Ha^{Q-(p-\alpha)}_{+\infty}(E_{l})}{(\lambda 2^{l+1})^\alpha}\lesssim \frac{1}{(1-2^{-\frac{\alpha}{p}})^p} 2^{kp}\int_{c_2\cdot A_{\lambda 2^{l+1}}}\rho_u^pd\mu.
    \end{equation}
    
    By \eqref{eq:(1-25)i_k decay}, we have that 
    \begin{equation}\label{eq:(1-25)i_k decay II}
    \sum_{k\in\N}2^{j_k}2^{kp}\int_{X\setminus B(O,2^{i_k}/c_2)}\rho_u^p\,d\mu<+\infty.
    \end{equation}
    Therefore, there exists $k_0\in\mathbb N$, sufficiently large, so that the two estimates \eqref{eq:1-12-24,2} and \eqref{eq:1-12-24,3} fail for all $k\geq k_0$ and $i_k\le l<i_{k+1}$.  Thus, for all $k\ge k_0$ and $i_k\le l<i_{k+1}$, it follows that \eqref{eq:1-12-24,4} holds.  
    
    Let $E:=\bigcup_{l=i_{k_0}}^{+\infty}E_{l}$.  For $m\ge i_{k_0+1}$, let $k_m\in\N$ be the smallest positive integer such that $i_{k_m}\le m$.  By \eqref{eq:1-12-24,4}, we then have 
    that 
    \begin{align*}
        \sum_{n=m}^{+\infty}\frac{\Ha^{Q-(p-\alpha)}_{+\infty}(E\cap A_{\lambda 2^{n+1}})}{(\lambda 2^{n+1})^\alpha}&\le\sum_{n=i_{k_m}}^{+\infty}\frac{\Ha^{Q-(p-\alpha)}_{+\infty}(E\cap A_{\lambda 2^{n+1}})}{(\lambda 2^{n+1})^\alpha}=\sum_{k=k_m}^{+\infty}\sum_{l=i_{k}}^{i_{k+1}-1}\frac{\Ha^{Q-(p-\alpha)}_{+\infty}(E_l)}{(\lambda 2^{l+1})^\alpha}\\
        &\lesssim\frac{1}{(1-2^{-\frac{\alpha}{p}})^p} \sum_{k=k_m}^{+\infty}\sum_{l=i_{k}}^{i_{k+1}-1}2^{kp}\int_{c_2\cdot A_{\lambda 2^{l+1}}}\rho_u^pd\mu\\
        &\lesssim \frac{1}{(1-2^{-\frac{\alpha}{p}})^p} \sum_{k=k_m}^{+\infty}2^{kp}\int_{X\setminus B(O,2^{i_k}/c_2)}\rho_u^p\,d\mu.
    \end{align*}
    Hence, from \eqref{eq:(1-25)i_k decay II}, we have that 
    \[
    \lim_{m\to+\infty}\sum_{n=m}^{+\infty}\frac{\Ha^{Q-(p-\alpha)}_{+\infty}(E\cap A_{\lambda 2^{n+1}})}{(\lambda 2^{n+1})^\alpha}=0 {\rm \ \ for\ all\ 0<\alpha<p},
    \]
    and so $E$ is a $p$-thin set.  By the definition of $E$,  it follows that 
    \[
    \lim_{E\not\ni x\to+\infty}u(x)=c.
    \]

 Now suppose that there exist  $c'\in\R$ and a $p$-thin set $E'\subset X$ such $\lim_{E'\not\ni x\to+\infty}u(x)=c'$.  Let $F:=X\setminus E$ and $F':=X\setminus E'$.  We claim that $A_{\lambda 2^{j+1}}\cap F\cap F'\ne\varnothing$ for all sufficiently large $j\in\N$.  Indeed, by Lemma~\ref{prop2.5-0605} (4.), it follows that $E\cup E'$ is a $p$-thin set, and so by Lemma~\ref{prop2.5-0605}, it follows that for all sufficiently large $j$, 
 \[
 A_{\lambda 2^{j+1}}\cap F\cap F'=A_{\lambda 2^{j+1}}\setminus (E\cup E')\ne\varnothing.
 \]
 Therefore, we must have that $c=c'$.
 \end{proof}

We now turn our attention to the proofs of  Theorem~\ref{thm1.3-2208} and Theorem~\ref{thm4-2705}, where we consider functions $u\in\dot N^{1,p}(X)$ with $p=Q$.  With the following lemma, we first show that if $u$ has a limit along some infinite curve, then it attains the same limit outside of some thin set.  However, since $p=Q$, the family of infinite curves has zero $p$-modulus, and so we must modify the argument used in the proof of Theorem~\ref{thm:p<Q Thin}.  Recall that when $p=Q$, we often drop the dependence on $Q$, referring to $Q$-thin sets simply as thin sets.

 \begin{lemma}\label{thm1.2-0505} 
Let  $1<Q<+\infty$. 
Suppose that $(X,d,\mu)$ is a complete, unbounded metric measure space with metric $d$ and Ahlfors $Q$-regular measure $\mu$ supporting a $Q$-Poincar\'e inequality. 
Let $u\in\dot N^{1,Q}(X)$ and suppose that there is an infinite curve $\gamma\in\Gamma^{+\infty}$ so that  $ \lim_{t\to+\infty}u(\gamma(t))=c_0\in[-\infty,+\infty]$. Then, there exists a thin set $E$ such that 
			\begin{equation}\label{eq1.2-0905}
				\lim_{E\not\ni x\to{+\infty}}u(x) =c_0.
			\end{equation}
\end{lemma}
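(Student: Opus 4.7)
The strategy is to adapt the proof of Theorem~\ref{thm:p<Q Thin} to the critical exponent $p=Q$. The principal obstruction is that $\text{\rm Mod}_Q(\Gamma^{+\infty})=0$, so one cannot take the reference set $F$ to be the union of trajectories of a family of curves with positive $Q$-modulus; instead I will take $F:=|\gamma|$, the single trajectory supplied by the hypothesis, and replace the modulus lower bound used in Theorem~\ref{thm:p<Q Thin} with the purely geometric estimate $\mathcal{H}^{1}_{+\infty}(\gamma \cap A_{\lambda 2^{j+1}}) \gtrsim \lambda 2^{j+1}$ furnished by Lemma~\ref{lem2.3-0505}, which is available as soon as $\gamma$ is an infinite curve eventually reaching $B(O,\lambda 2^{j+1})$.

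Concretely, the sequence $a_j := \int_{c_2 \cdot A_{\lambda 2^{j+1}}} \rho_u^Q\, d\mu$ is summable by the finite overlap of the enlarged annuli together with $\rho_u \in L^Q_\mu(X)$. Applying Lemma~\ref{in-fact} with this $\{a_j\}$ and with the set $F=|\gamma|$ yields a non-increasing sequence $b_j \to 0$ satisfying $\sum_j a_j b_j^{-Q} < +\infty$, together with a quantitative closeness of $u$ to $c_0$ on $F \cap A_{\lambda 2^{j+1}}$ in each of the three outcomes $c_0 \in \R$, $c_0=+\infty$, $c_0=-\infty$. Define
\[
E_j := \{x \in A_{\lambda 2^{j+1}} : |u(x)-c_0|>b_j\}
\]
when $c_0 \in \R$, with the obvious modifications from Lemma~\ref{in-fact} in the infinite cases. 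For each $j$, split into three cases according to whether \emph{good} points, meaning points satisfying $|u(\cdot)-u_{B(\cdot,2^j)}| \le b_j/10$, exist in $E_j$ and in $F \cap A_{\lambda 2^{j+1}}$. Case~1 (good points on both sides) yields $1 \lesssim b_j^{-Q}\, a_j$ via Lemma~\ref{lem3.1}; Case~2 (no good points in $E_j$) bounds $\mathcal{H}^{Q-(Q-\alpha)}_{+\infty}(E_j)$ via Lemma~\ref{lem3.2} for every $0<\alpha<Q$; and Case~3 (no good points in $F \cap A_{\lambda 2^{j+1}}$), applied with the specific choice $\alpha=1$ in Lemma~\ref{lem3.2}, gives
\[
\mathcal{H}^{1}_{+\infty}(F \cap A_{\lambda 2^{j+1}}) \lesssim 2^j\, b_j^{-Q}\, a_j,
\]
which combined with Lemma~\ref{lem2.3-0505} again produces $1 \lesssim b_j^{-Q}\, a_j$.

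Since $\sum_j b_j^{-Q}\, a_j < +\infty$, Cases~1 and~3 can only occur finitely often, so for every sufficiently large $j$ we are in Case~2. Taking $E := \bigcup_{j \ge k_0} E_j$ with $k_0$ large enough, the Case~2 estimates summed in $j$ establish the tail condition in Definition~\ref{def-thin-0905} for every $0<\alpha<Q$, and $\lim_{E \not\ni x \to +\infty} u(x)=c_0$ by construction (with the obvious reformulation when $c_0=\pm\infty$). The main obstacle is Case~3: the modulus admissibility argument of Theorem~\ref{thm:p<Q Thin} fails at $p=Q$, and its replacement depends on the tight pairing between Lemma~\ref{lem3.2} at $\alpha=1$ and the single-curve content bound from Lemma~\ref{lem2.3-0505}, which together furnish exactly the lower bound on $\mathcal{H}^{1}_{+\infty}(F \cap A_{\lambda 2^{j+1}})$ that is missing when only one infinite curve is available.
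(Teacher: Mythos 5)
Your proposal is correct and follows essentially the same route as the paper: Lemma~\ref{in-fact} applied to the tail integrals over the enlarged annuli, the same three-case split via Lemmas~\ref{lem3.1} and~\ref{lem3.2}, and the same replacement of the modulus lower bound by the single-curve content estimate $\mathcal H^1_{+\infty}(\gamma\cap A_{\lambda 2^{j+1}})\gtrsim 2^j$ from Lemma~\ref{lem2.3-0505} paired with Lemma~\ref{lem3.2} at $\alpha=1$. The only differences are notational (your $b_j$ is the paper's $a_j$) and your tail union $E=\bigcup_{j\ge k_0}E_j$, which is a perfectly valid (indeed cleaner) way to package the thin set.
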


\begin{proof}


We recall that  $+\infty>\lambda\geq 1$ is
the scaling factor of the $Q$-Poincar\'e inequality, and that  $\rho_u$ is  a $Q$-integrable upper gradient of $u$. We also recall that we use the notation $A_{\lambda 2^{j+1}}:=B(O,\lambda2^{j+2})\setminus B(O,\lambda2^{j+1})$ and $c_2\cdot A_{\lambda 2^{j+1}}:=B(O,c_2\lambda 2^{j+3})\setminus B(O,2^{j}/(c_2\lambda))$ for $j\in\mathbb N$, where   $c_2$ is as in Definition \ref{dyadic}.  Here the annular chain property  is given since $X$ is complete, see Section \ref{sec-dyadic}.

We first prove \eqref{eq1.2-0905} for the case of $c_0\in\mathbb R$.
Since $u\in\dot N^{1,Q}(X)$, there exists by Lemma \ref{in-fact}, a non-increasing sequence $\{a_j\}_{j\in\mathbb N}$ with $a_j>0$ such that 
\begin{equation}
\label{sequence-a_j}\lim_{j\to{+\infty}}a_j=0,\ \sum_{j=1}^{+\infty} a_j^{-Q}\int_{c_2\cdot A_{\lambda 2^{j+1}}}\rho_u^Qd\mu<{+\infty}, \text{\rm \ \ and\ \ } |u-c_0|<a_j/2 \text{\rm \ \ on \ }{ \gamma\cap A_{\lambda 2^{j+1}}}\ {\rm for\ }j\in\mathbb N.
\end{equation}
Let $E_l:=\{ x\in A_{\lambda 2^{l+1}}: |u(x)-c_0|>a_l\}$ for $l\in\mathbb N$.  We set $E:=\bigcup_{l=1}^{+\infty} E_l$. It is clear that $\lim_{x\to{+\infty}, x\notin E}u(x)=c_0$. We now only need to prove that $E$ is a thin set. Since $a_l/2\leq |u(x)-c_0| - |u(y)-c_0|\leq |u(x)-u(y)|$ for all $x\in E_l$ and $y\in  \gamma\cap A_{\lambda 2^{l+1}}$ by \eqref{sequence-a_j}, we have that
	\begin{equation}\label{eq3.9-1207}
 a_l/2\leq |u(x)-u(y)| \leq | u(x)-u_{B(x,2^l)}|+|u_{B(x,2^l)}-u_{B(y,2^l)}|+|u_{B(y,2^l)}-u(y)|
 	\end{equation}
  for all $x\in E_l$ and $y\in  \gamma\cap A_{\lambda 2^{l+1}}$.
Using Lemma \ref{lem3.1} and Lemma \ref{lem3.2}, we consider the following cases:

If  there exist $x\in E_l$ and $y\in\gamma\cap A_{\lambda 2^{l+1}}$ such that $|u(x)-u_{B(x,2^l)}|\le\frac{a_l}{10}$ and $|u(y)-u_{B(y,2^l)}|\le\frac{a_l}{10}$, then by estimate \eqref{eq3.3-1907} from Lemma \ref{lem3.1}, we have that 
\begin{equation}
	\label{eq3.11} 1\lesssim a_l^{-Q}\int_{c_2\cdot A_{\lambda 2^{l+1}}}\rho_u^Qd\mu. 	\end{equation}
Let $0<\alpha<Q$.
If $|u(x)-u_{B(x,2^l)}|>\frac{a_l}{10}$ for all $x\in E_l$, then estimate \eqref{eq3.6} from Lemma \ref{lem3.2} gives
 \begin{equation}
		\label{eq3.12} 
		\frac{\mathcal H^\alpha_{+\infty}(E_l)}{(2^l)^\alpha}\lesssim  \frac{1}{(1-2^{-\frac{\alpha}{Q}})^Q} a_l^{-Q}\int_{c_2\cdot A_{\lambda 2^{l+1}}}\rho_u^Qd\mu.
\end{equation}
Likewise, if $|u(y)-u_{B(y,2^l)}|>\frac{a_l}{10}$ for all $y\in \gamma\cap A_{\lambda2^{l+1}}$, then the same estimate from Lemma~\ref{lem3.2}, with $\alpha=1$, gives  
\begin{equation}\notag
			\frac{ \mathcal{H}^{1}_{+\infty} (\gamma\cap A_{\lambda 2^{l+1}})}{2^l}\lesssim a_l^{-Q}\int_{c_2\cdot A_{\lambda 2^{l+1}}}\rho_u^Qd\mu.
\end{equation} 
  By Lemma \ref{lem2.3-0505}, $\mathcal H^{1}_{+\infty} (\gamma\cap A_{\lambda 2^{l+1}})\gtrsim 2^{l}$ for all sufficiently large $l$, and so in this case, we also have that 	
	 \begin{equation}
	 		\label{eq3.13} 1\lesssim  a_l^{-Q}\int_{c_2\cdot A_{\lambda 2^{l+1}}}\rho_u^Qd\mu
	 \end{equation} 
  for all sufficiently large $l$.
  
  By \eqref{sequence-a_j}, there exists $k\in\N$ such that \eqref{eq3.13} does not  hold for any $l\geq k$, and so for all $l\ge k$, there exists $y\in\gamma\cap A_{\lambda 2^{l+1}}$ such that $|u(y)-u_{B(y,2^l)}|\le\frac{a_l}{10}$.  From \eqref{eq3.9-1207} and \eqref{eq3.11}, we then have that \eqref{eq3.12} holds true for all $l\geq k$. 
	  	Summing \eqref{eq3.12}  over $l\geq k$, it follows from \eqref{sequence-a_j} that
	 			\begin{equation}\label{eq4.6-2905}
	 		\lim_{k\to{+\infty}}\sum_{l=k}^{+\infty}
	 		 \frac{\mathcal H^{\alpha}_{{+\infty}}(E_l)}{(2^{l})^\alpha}\lesssim  \frac{1}{(1-2^{-\frac{\alpha}{Q}})^Q}  \lim_{k\to{+\infty}}\sum_{l=k}^{+\infty} a_l^{-Q}\int_{c_2\cdot A_{\lambda 2^{l+1}}}\rho_u^Qd\mu=0,
	 			\end{equation}
	 		and so $E=\bigcup_{l=1}^{+\infty} E_l$ is a thin set. This completes the proof for the case $c_0\in\mathbb R$.
	
	Now suppose that $c_0=+\infty$.  We use similar arguments as above.  By Lemma \ref{in-fact} and since $u\in \dot N^{1,p}(X)$, there exists a non-increasing sequence $\{a_j\}_{j\in\mathbb N}$ with $a_j>0$ such that 
	\[
		\lim_{j\to+\infty}a_j=0, \ \sum_{j=1}^{+\infty} a_j^{-Q}\int_{c_2\cdot A_{\lambda 2^{j+1}}}\rho_u^Qd\mu<+\infty, \text{\rm \ \ and \ \  $u-\frac{1}{a_j}>a_{j}/2$ on  $\gamma\cap A_{\lambda 2^{j+1}}$ for $j\in\mathbb N$}.
	\]
Letting $E_{l}:=\{x\in A_{\lambda 2^{l+1}}: u(x)-\frac{1}{a_l}< \frac{a_l}{4}\}$ for $l\in\mathbb N$,  we again set $E:=\bigcup_{l=1}^{+\infty}E_l$. It is clear that $\lim_{x\to+\infty, x\notin E}u(x)=+\infty$. Moreover, $0<a_l/4\leq (u(y)-\frac{1}{a_l}) - (u(x)-\frac{1}{a_l})=u(y)-u(x)\leq |u(y)-u(x)|$ for all $x\in E_l$ and $y\in \gamma\bigcap A_{\lambda 2^{l+1}}$. Hence 
	\[
	a_l/4\leq |u(x)-u(y)| \leq | u(x)-u_{B(x,2^l)}|+|u_{B(x,2^l)}-u_{B(y,2^l)}|+|u_{B(y,2^l)}-u(y)|
 	\]
  for all $x\in E_l$ and $y\in  \gamma\cap A_{\lambda 2^{l+1}}$. Repeating the above arguments from \eqref{eq3.9-1207} to \eqref{eq4.6-2905}, we obtain that $E$ is a thin set. 
Similarly, it is easy to check that the  claim \eqref{eq1.2-0905} holds when $c_0=-\infty$. This completes the proof.
\end{proof}
	

\begin{lemma}\label{lem4.5-0703}
Let  $1<Q<+\infty$. 
Suppose that $(X,d,\mu)$ is an  unbounded metric measure space with metric $d$ and Ahlfors $Q$-regular measure $\mu$. Let $E$ be a thin set. Then $F:=X\setminus E$ is a thick set.
\end{lemma}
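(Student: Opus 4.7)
The plan is to reduce the content lower bound required for thickness to a measure lower bound on annular regions, and extract this from thinness of $E$ via Ahlfors regularity. The key observation is that Lemma~\ref{lem:Measure-Content Bound}, specialized to $p=Q$, gives the implication $\mu(G)\gtrsim r^Q \Rightarrow \mathcal H^\alpha_{+\infty}(G)\gtrsim r^\alpha$ for $G\subset A_r$, with a comparison constant depending only on the Ahlfors $Q$-regularity constant and not on $\alpha\in(0,Q)$. Hence, if $F\cap A_{C2^{j+1}}$ occupies a definite fraction of the measure of the annulus for all sufficiently large $j$, the thickness estimate---uniform in $\alpha$---follows immediately.

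More concretely, I would argue as follows. Let $C\ge 1$ be the thin constant of $E$, and declare this same $C$ to be the thick constant for $F$. By Lemma~\ref{prop2.5-0605}(2.) applied with $p=Q$, the tail sum $\sum_{j\ge k}\mu(E\cap A_{C2^{j+1}})/(C2^{j+1})^Q$ tends to $0$ as $k\to+\infty$; in particular the individual terms satisfy $\mu(E\cap A_{C2^{j+1}})/(C2^{j+1})^Q\to 0$. Combining this with the Ahlfors $Q$-regular estimate $\mu(A_{C2^{j+1}})\simeq (C2^{j+1})^Q$ produces an index $k_0\in\mathbb N$ such that
\[
\mu(F\cap A_{C2^{j+1}})\;=\;\mu(A_{C2^{j+1}})-\mu(E\cap A_{C2^{j+1}})\;\ge\;\tfrac{1}{2}\mu(A_{C2^{j+1}})\;\gtrsim\;(C2^{j+1})^Q \qquad \text{for all } j\ge k_0.
\]
Applying Lemma~\ref{lem:Measure-Content Bound} with $p=Q$ and $G=F\cap A_{C2^{j+1}}$ then yields, for every $\alpha\in(0,Q)$ and every $j\ge k_0$,
\[
\mathcal H^\alpha_{+\infty}(F\cap A_{C2^{j+1}})\;\gtrsim\;(C2^{j+1})^\alpha\cdot\frac{\mu(F\cap A_{C2^{j+1}})}{(C2^{j+1})^Q}\;\gtrsim\;(C2^{j+1})^\alpha\;\ge\;(2^{j+1})^\alpha,
\]
where the last inequality uses $C\ge 1$ and $\alpha>0$. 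Taking $\delta>0$ to be the implicit constant (which depends only on $C$ and the Ahlfors regularity constant, and crucially \emph{not} on $\alpha$) verifies that $F$ is thick with constants $C$, $\delta$, and $k_0$ in the sense of Definition~\ref{def:ThickSets}.

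I do not anticipate any real obstacle. The only subtle point worth highlighting is the uniformity in $\alpha$: the definition of thinness supplies only a decay statement for each fixed $\alpha$, while thickness demands a single $\delta$ working for all $\alpha\in(0,Q)$ simultaneously. Routing the argument through the $\mu$-measure of $F\cap A_{C2^{j+1}}$ sidesteps this issue entirely, because the passage from measure to content in Lemma~\ref{lem:Measure-Content Bound}, and the Ahlfors regularity bound on $\mu(A_{C2^{j+1}})$, both carry constants independent of $\alpha$.
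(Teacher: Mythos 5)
Your argument is correct, and it reaches the conclusion by a slightly different route than the paper. The paper works entirely at the level of Hausdorff content: it fixes $\alpha\in(0,Q)$, invokes Lemma~\ref{lem:Radius-Content} to get $\mathcal H^\alpha_{+\infty}(A_{C2^{j+1}})\gtrsim(2^{j+1})^\alpha$, uses the thinness of $E$ to make $\mathcal H^\alpha_{+\infty}(E\cap A_{C2^{j+1}})$ eventually smaller than half of that, and subtracts using subadditivity of $\mathcal H^\alpha_{+\infty}$. You instead perform the complementation at the level of $\mu$-measure, feeding Lemma~\ref{prop2.5-0605}(2.) (measure decay of $E$ in the annuli) into the Ahlfors bound $\mu(A_{C2^{j+1}})\simeq(C2^{j+1})^Q$, and only then convert the resulting measure lower bound for $F\cap A_{C2^{j+1}}$ into a content lower bound via Lemma~\ref{lem:Measure-Content Bound} with $p=Q$. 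Both routes ultimately rest on the same two ingredients (the annulus has full measure $\simeq r^Q$, and measure controls content with an $\alpha$-independent constant), since Lemma~\ref{lem:Radius-Content} is itself proved by the argument of Lemma~\ref{lem:Measure-Content Bound}. What your route buys is exactly the point you flag: Definition~\ref{def:ThickSets} demands a single $\delta$ and a single threshold $k$ valid for \emph{all} $\alpha\in(0,Q)$, and because your index $k_0$ is extracted from a purely measure-theoretic statement with no $\alpha$ in it, the uniformity is automatic; in the paper's proof the threshold in \eqref{eq4.19-0803} is a priori chosen after fixing $\alpha$, so the uniformity requires an extra word (essentially your argument). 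Two cosmetic remarks: the identity $\mu(F\cap A)=\mu(A)-\mu(E\cap A)$ should be stated as the inequality $\mu(F\cap A)\ge\mu(A)-\mu(E\cap A)$ coming from subadditivity of the outer measure, since $E$ is not assumed measurable (this is all you use anyway, and it mirrors how the paper handles the content version); and the lower bound $\mu(A_{C2^{j+1}})\gtrsim(C2^{j+1})^Q$ for the annulus is the same implicit consequence of Ahlfors regularity that the paper relies on elsewhere (e.g.\ in Lemma~\ref{prop2.5-0605}(1.)), so you are on equal footing there.
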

\begin{proof}
Let $E$ be a $Q$-thin set. Let $0<\alpha<Q$. Then there is a finite constant $C\geq 1$ such that 
\begin{equation}\label{eq4.16-0803}
\lim_{k\to{+\infty}}\sum_{j=k}^{+\infty}\frac{\mathcal H^{\alpha}_{+\infty}(E\cap A_{C2^{j+1}})}{(C 2^{j+1})^\alpha}=0. 
\end{equation}
By Lemma~\ref{lem:Radius-Content}, there  is a constant $C_1>0$ such that  
\begin{equation}\label{eq4.18-0803}
	\mathcal H_{+\infty}^{\alpha}( A_{C2^{j+1}})\geq C_1 (2^{j+1})^\alpha \text{\rm \ \ for all $j\in\mathbb N$},
\end{equation}
and by \eqref{eq4.16-0803}, there is an index $k\in\mathbb N$ sufficiently large so that 
\begin{equation}\label{eq4.19-0803}
	 \frac{\mathcal H^\alpha_{+\infty}(E\cap A_{C2^{j+1}})}{(2^{j+1})^\alpha}<\frac{C_1}{2}
\end{equation}
for all $j\geq k$.
Combining the above estimate \eqref{eq4.19-0803} with \eqref{eq4.18-0803},  we obtain from subadditivity of $\mathcal H^{\alpha}_{+\infty}$ that for $j\geq k$,
\begin{align*}
	\mathcal H^{\alpha}_{+\infty}((X\setminus E) \cap A_{C2^{j+1}}) \geq& \mathcal H^{\alpha}_{+\infty} ( A_{C2^{j+1}}) - \mathcal H^{\alpha}_{+\infty}( E\cap A_{C2^{j+1}})\\
\geq & C_1 (2^{j+1})^\alpha-\frac{C_1}{2}(2^{j+1})^\alpha=\frac{C_1}{2}(2^{j+1})^\alpha.
\end{align*}
 Hence, $X\setminus E$ is a thick set, completing the proof.
\end{proof}

 	In what follows of this section, we let  $A_j:=A_{C2^{j+1}}$ for $j\in\mathbb N$, where $C$ satisfies the condition \eqref{thick} with respect to the definition for a given thick set $F$.

\begin{lemma}\label{lem5.1-2308}
Let  $1<Q<+\infty$. 
Suppose that $(X,d,\mu)$ is an  unbounded metric measure space with metric $d$ and Ahlfors $Q$-regular measure $\mu$ supporting a $Q$-Poincar\'e inequality. Additionally, we assume that $X$ is complete.
Let $F\subset X$ be a thick set. Then there exist finite constants $k_F\in\mathbb N$ and $C_F>0$ so that  
\begin{equation}\label{eq5.1-2308}
\text{\rm Cap}_Q(F\cap A_{j}, A_{j+2}, G_j)\geq C_F \text{\rm \ \ for all $j\geq k_F$}
\end{equation}
where  $G_j:= B(O,\lambda\cdot \overline c_2 \cdot C\cdot 2^{j+4})\setminus B(O,\frac{2^{j-2}}{\lambda \cdot \overline c_2})$. Here $\lambda$ is the scaling factor of the $Q$-Poincar\'e inequality, $\overline c_2$ is the constant from Proposition~\ref{prop2.3-0806}, $C$ satisfies the condition \eqref{thick} with respect to the thick set $F$, and $A_j:=A_{C2^{j+1}}=B(O,C2^{j+2})\setminus B(O,C2^{j+1})$. 
\end{lemma}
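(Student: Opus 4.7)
The plan is to combine three ingredients: the modulus-capacity equality \eqref{eq2.4-2308}, the $Q$-Loewner property of $X$ established in Theorem~\ref{thm2.6-0709}, and the Hausdorff-content lower bound that thickness of $F$ provides.

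By \eqref{eq2.4-2308}, the claim reduces to proving $\Mod_Q(F\cap A_j, A_{j+2}, G_j) \ge C_F$ uniformly in $j$. First I would notice that the annular region $G_j$ has an essentially scale-invariant shape: its outer-to-inner radius ratio is $2^6 \lambda^2 \overline{c_2}^{\,2} C$, independent of $j$, and both $A_j$ and $A_{j+2}$ sit inside $G_j$ in the same relative configuration. So after rescaling by $2^{-j-1}$ the problem becomes a lower modulus bound at unit scale, while the Ahlfors $Q$-regularity and the $Q$-Poincar\'e inequality are preserved.

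Next, I would produce a non-degenerate continuum $E_2 \subset A_{j+2}$ with $\diam E_2 \gtrsim 2^{j}$, which is possible because complete Ahlfors $Q$-regular $Q$-Poincar\'e spaces are quasi-convex (see \cite[Theorem~8.3.2]{pekka}), so the annulus $A_{j+2}$ is connected and contains rectifiable arcs with diameter comparable to its own. For the source, the thickness of $F$ supplies the Hausdorff-content bound $\Ha^{\alpha}_{+\infty}(F \cap A_j) \ge \delta (2^{j+1})^{\alpha}$ uniformly in $j$ for every $\alpha \in (0,Q)$, with the same constant $\delta$ for every such $\alpha$.

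The main obstacle is that $F \cap A_j$ need not be a continuum, so Theorem~\ref{thm2.7-0709} cannot be applied directly with $F \cap A_j$ in place of one of the continua. The remedy in Loewner spaces is to pass from the continuum version of the Loewner modulus estimate to a Hausdorff-content version: via Frostman's lemma one constructs a Radon measure $\nu$ supported on $F \cap A_j$ satisfying $\nu(B(x,r)) \le Cr^{\alpha}$ and $\nu(F \cap A_j) \gtrsim \delta(2^{j+1})^{\alpha}$, and uses $\nu$ together with the $Q$-Poincar\'e inequality along chains of balls in $G_j$ (provided by the annular chain property of Proposition~\ref{prop2.3-0806}) to force any admissible $u$ with $u \equiv 1$ on $F \cap A_j$ and $u \equiv 0$ on $A_{j+2}$ to satisfy $\int_{G_j} g_u^Q \, d\mu \ge C_F$. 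The resulting constant $C_F$ depends only on $\delta$, the Ahlfors $Q$-regularity constant, the Poincar\'e data, and the Loewner function of $X$, while the threshold $k_F$ is dictated by the threshold in Definition~\ref{def:ThickSets} together with the geometric constraint that $A_j$ and $A_{j+2}$ both lie well inside $G_j$.
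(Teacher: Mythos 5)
Your outline assembles the right ingredients (the modulus--capacity identity \eqref{eq2.4-2308}, the content lower bound from Definition~\ref{def:ThickSets}, the chains of Proposition~\ref{prop2.3-0806}), and you correctly identify the real obstacle: $F\cap A_j$ is an arbitrary set of positive Hausdorff content, not a continuum, so Theorem~\ref{thm2.7-0709} does not apply to it. But the decisive step is only named, not carried out. Saying that one ``uses $\nu$ together with the $Q$-Poincar\'e inequality along chains of balls to force $\int_{G_j}g_u^Q\,d\mu\ge C_F$'' is precisely the content of the lemma; everything before it is routine. Concretely, you must explain how to pass from the hypothesis $u\equiv 1$ on $F\cap A_j$, $u\equiv 0$ on $A_{j+2}$ to a lower bound on $\int_{G_j}g^Q d\mu$, and this requires a case analysis that your sketch never confronts: for a given point $x\in F\cap A_j$, the value $u(x)=1$ may or may not be close to the ball average $u_{B(x,2^j)}$, and the two situations call for different arguments. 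The paper splits into three cases via the telescoping inequality $1\le |u(x)-u_{B(x,2^j)}|+|u_{B(x,2^j)}-u_{B(y,2^j)}|+|u_{B(y,2^j)}-u(y)|$: if the first (resp.\ third) term is $\ge 1/5$ for \emph{every} $x\in F\cap A_j$ (resp.\ every $y\in A_{j+2}$), then Lemma~\ref{lem3.2} --- a weak-type estimate for the fractional maximal function of $g^Q$ with respect to Hausdorff content, Theorem~\ref{fractional-maximal-theorem} --- converts the content lower bound \eqref{thick} (resp.\ Lemma~\ref{lem:Radius-Content}) directly into $1\lesssim\int_{G_j}g^Q d\mu$; otherwise the middle term is $\ge 3/5$ for some pair, and the chain of Proposition~\ref{prop2.3-0806} plus the Poincar\'e inequality finishes. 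Note that the Loewner property (Theorems~\ref{thm2.6-0709}--\ref{thm2.7-0709}) is not used here at all; it enters only later, in Lemma~\ref{lem5.3-0709}, where both sets genuinely are continua.

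Your proposed remedy via Frostman's lemma is a legitimate alternative to Lemma~\ref{lem3.2}: integrating the telescoping estimate against a measure $\nu$ with $\nu(B(x,r))\le r^\alpha$ and $\nu(F\cap A_j)\gtrsim\delta(2^{j+1})^\alpha$, and using the growth condition to sum the resulting series, does yield the same bound as the fractional maximal function argument. But two caveats: first, Frostman's lemma requires $F\cap A_j$ to be at least analytic (Definition~\ref{def:ThickSets} imposes no measurability on $F$), whereas the content-based weak-type estimate of Theorem~\ref{fractional-maximal-theorem} applies to arbitrary sets since $\Ha^\alpha_{+\infty}$ is an outer measure; second, even with $\nu$ in hand you still need the three-way case analysis above, since the Frostman measure only controls the term $|u(x)-u_{B(x,2^j)}|$ and not the transfer of mass from $B(x,2^j)$ across $G_j$ to $B(y,2^j)$. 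As written, the proposal has a genuine gap at its center.
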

\begin{proof}
Let $0<\alpha<Q$.
Since $F$ is a thick set,  \eqref{thick} gives that there are finite constants $\delta_F>0$, and $k_F\in\N$ so that 
 \begin{equation}
 \label{eq4.1-2308}
 	\mathcal H^{\alpha}_{+\infty}(F\cap A_j)\geq \delta_F (2^{j+1})^\alpha
 \end{equation}
 holds for all 
   $j\ge k_F$. 
 Moreover, from Lemma~\ref{lem:Radius-Content} we have that 
 \begin{equation}
 \label{eq4.2-2308}
 	\mathcal H^{\alpha}_{+\infty}(A_{j+2})\gtrsim (2^{j+2})^\alpha.
 \end{equation} 
 
 Let $g$ be a $Q$-integrable upper gradient in $G_j$ of a function $u$ which satisfies  $u|_{F\cap A_j}\equiv 1$ and $u|_{A_{j+2}}\equiv 0$. 
 Then for all $x\in F\cap A_j$ and $y\in A_{j+2}$, we have
 \begin{equation}\label{eq4.2-0905}
 	1\leq |u(x)-u(y)|\leq |u(x)-u_{B(x, 2^j)}|+|u_{B(x,2^j)}-u_{B(y, 2^{j})}|+|u_{B(y, 2^{j})}-u(y)|.
 \end{equation}
Since $X$ is complete by assumption, our space has the  annular chain property, see Section \ref{sec-dyadic}.  Let $\overline c_2\ge 1$ be the constant from Proposition~\ref{prop2.3-0806}, which appears in the definition of $G_j$ above. Then by Lemma \ref{lem3.2}, we obtain 
 	\[
		\mathcal H^{\alpha}_{+\infty}(F\cap A_j)\lesssim  \frac{1}{(1-2^{-\frac{\alpha}{Q}})^Q}  (2^j)^\alpha \int_{G_j}g^Qd\mu
	\]
 if $\frac{1}{5}\leq |u(x)-u_{B(x, 2^j)}|$ for all $x\in F\cap A_j$.  Likewise Lemma \ref{lem3.2} gives us that 
 	\[
	\mathcal H^{\alpha}_{+\infty}(A_{j+2})\lesssim  \frac{1}{(1-2^{-\frac{\alpha}{Q}})^Q}  (2^{j})^\alpha \int_{G_j}g^Qd\mu
	\]
 if $\frac{1}{5}\leq |u_{B(y, 2^{j})}-u(y)|$ for all $y\in A_{j+2}$. Combining these estimates with \eqref{eq4.1-2308} and \eqref{eq4.2-2308}, we obtain \eqref{eq5.1-2308} for all $j\ge k_F$ when either $\frac{1}{5}\leq |u(x)-u_{B(x, 2^j)}|$ for all $x\in F\cap A_j$ or $\frac{1}{5}\leq |u_{B(y, 2^{j})}-u(y)|$ for all $y\in A_{j+2}$. 
 
 Fix $j\ge k_F$ and suppose that there exists $x\in F\cap A_j$ and $y\in A_{j+2}$ such that $|u(x)-u_{B(x,2^j)}|<\frac{1}{5}$ and $|u(y)-u_{B(y,2^j)}|<\frac{1}{5}$.  Then by \eqref{eq4.2-0905}, we have $\frac{3}{5}\le|u_{B(x,2^j)}-u_{B(y, 2^{j})}|.$
By Proposition~\ref{prop2.3-0806}, we can connect the balls $B(x,2^j)$ and $B(y,2^j)$ by a finite chain of balls $\{B_i\}_{i=0}^{\overline M}$, each having radius comparable to $2^j$. Then, by the $Q$-Poincar\'e inequality and Ahlfors $Q$-regularity, it follows that 
 \[
 \frac{3}{5}\leq |u_{B(x,2^j)}-u_{B(y, 2^{j})}|\leq \sum_{i=0}^{{\overline M}-1}|u_{B_i+1}-u_{B_i}|\lesssim \sum_{i=0}^{\overline{M}} \left(\int_{\lambda \cdot B_i}g^Qd\mu \right)^{1/Q}
 \]
 where $\lambda$ is the scaling constant of $Q$-Poincar\'e inequality.  By our choice of $G_j$, it follows that each $\lambda B_i$ is a subset of $G_j$.  By this fact and the above estimate, it follows that there is an index $i$ so that 
 \[
 	1\lesssim \int_{\lambda B_i}g^Qd\mu \leq \int_{G_j}g^Qd\mu.
 \]
Since $(u,g)$ is an arbitrary admissible pair for computing $ \text{\rm Cap}_Q(F\cap A_{j}, A_{j+2}, G_j)$, this estimates gives \eqref{eq5.1-2308} for all $j\ge k_F$ when $|u_{B(x,2^j)}-u_{B(y, 2^{j})}|\geq \frac{3}{5}$ for some $x\in F\cap A_j$ and $y\in A_{j+2}$. This completes the proof.
\end{proof}

Given a rectifiable curve $\gamma$, we denote the length of $\gamma$ by $\ell(\gamma)$.

\begin{lemma}\label{lem5.2-2308}
Let  $1<Q<+\infty$. 
Suppose that $(X,d,\mu)$ is an  unbounded metric measure space with metric $d$ and Ahlfors $Q$-regular measure $\mu$ supporting a $Q$-Poincar\'e inequality. We assume that $X$ is complete.
Let $u\in\dot N^{1,Q}(X)$, and let $F$ be a thick set such that $\lim_{x\to+\infty,x\in F}u(x)=c$ for some $c\in[-\infty,+\infty]$. Then there is a sequence of rectifiable curves, denoted $\{\gamma_j\}_{j\in\mathbb N}$, with $\gamma_j\cap F\neq\emptyset$ and $\gamma_j$ connecting $F\cap A_j$ and $A_{j+2}$ in $G_j$, such that \[
\lim_{\cup_{j\in\mathbb N}\gamma_j\ni x\to+\infty}u(x)=c.
\]
Furthermore, $\ell (\gamma_j)\simeq 2^j$ for each $j\ge k_F$, where $k_F$ is as given in Lemma \ref{lem5.1-2308}.
Here, $G_j$ is defined as in Lemma \ref{lem5.1-2308} as well.
\end{lemma}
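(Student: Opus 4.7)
The plan is to extract $\gamma_j$ as a ``good'' curve from the family of rectifiable curves in $G_j$ joining $F \cap A_j$ to $A_{j+2}$, removing in succession those curves that are too long and those on which $\int_\gamma \rho_u\, ds$ is too large. Fix a $Q$-integrable upper gradient $\rho_u$ of $u$ and write $\Gamma_j$ for that family. By Lemma~\ref{lem5.1-2308} together with the identity $\mathrm{Mod}_Q = \mathrm{Cap}_Q$ from \eqref{eq2.4-2308}, we have $\mathrm{Mod}_Q(\Gamma_j) \ge C_F$ for every $j \ge k_F$.

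Step one is the length cut-off. Setting $\Gamma_j^{\mathrm{long}} := \{\gamma \in \Gamma_j : \ell(\gamma) > C_0 2^j\}$ and testing against $\rho = (C_0 2^j)^{-1}\chi_{G_j}$, which is admissible for $\Gamma_j^{\mathrm{long}}$ since each such $\gamma$ lies in $G_j$ and has length exceeding $C_0 2^j$, Ahlfors $Q$-regularity gives
\[
\mathrm{Mod}_Q(\Gamma_j^{\mathrm{long}}) \le \frac{\mu(G_j)}{(C_0 2^j)^Q} \lesssim C_0^{-Q}.
\]
Choose $C_0$ large enough that this bound is at most $C_F/4$; by subadditivity, $\mathrm{Mod}_Q(\Gamma_j^{\mathrm{short}}) \ge 3C_F/4$, where $\Gamma_j^{\mathrm{short}} := \Gamma_j \setminus \Gamma_j^{\mathrm{long}}$.

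Step two is the energy cut-off. Since $\rho_u \in L^Q(X)$ and the sets $G_j$ eventually escape every bounded region, $\int_{G_j}\rho_u^Q\, d\mu \to 0$; pick $\epsilon_j \downarrow 0$ so slowly that $\epsilon_j^{-Q}\int_{G_j}\rho_u^Q\, d\mu \to 0$ as well. For the subfamily $\Gamma_j^{\mathrm{bad}} := \{\gamma \in \Gamma_j : \int_\gamma \rho_u\, ds > \epsilon_j\}$ the function $\rho_u/\epsilon_j$ is admissible, so $\mathrm{Mod}_Q(\Gamma_j^{\mathrm{bad}}) \le \epsilon_j^{-Q}\int_{G_j}\rho_u^Q\, d\mu$ drops below $C_F/4$ once $j \ge k_1$ for some $k_1 \ge k_F$. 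Hence $\Gamma_j^{\mathrm{short}} \setminus \Gamma_j^{\mathrm{bad}}$ has positive $Q$-modulus, and in particular is nonempty; choose any $\gamma_j$ from it for $j \ge k_1$ and any curve in $\Gamma_j$ for $k_F \le j < k_1$.

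The required properties now follow directly. The curve $\gamma_j$ meets $F$ because it connects $F \cap A_j$ to $A_{j+2}$; its length is bounded above by $C_0 2^j$ by construction, and below by $\mathrm{dist}(A_j, A_{j+2}) \simeq 2^j$, giving $\ell(\gamma_j) \simeq 2^j$. For any $x \in \gamma_j$ and any $x_0 \in \gamma_j \cap F \cap A_j$, the upper gradient inequality along the subcurve from $x_0$ to $x$ gives
\[
|u(x) - u(x_0)| \le \int_{\gamma_j}\rho_u\, ds \le \epsilon_j,
\]
while $x_0 \in F$ with $|x_0| \simeq 2^j$ forces $u(x_0) \to c$ as $j \to \infty$ (when $c = \pm\infty$, this is used as $u(x) \ge u(x_0) - \epsilon_j$ or $u(x) \le u(x_0) + \epsilon_j$). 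Because every $x \in \gamma_j \subset G_j$ satisfies $|x| \simeq 2^j$, the relation $x \to \infty$ within $\bigcup_j \gamma_j$ forces $j \to \infty$, and combining the two estimates yields $\lim_{\bigcup_j \gamma_j \ni x \to \infty} u(x) = c$. The main technical point is the simultaneous two-term modulus subtraction; it succeeds precisely because both the length obstruction and the upper-gradient-integral obstruction can be made quantitatively small using Ahlfors regularity together with $\rho_u \in L^Q$ and the escape of $G_j$ to infinity.
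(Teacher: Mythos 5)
Your argument is correct and follows essentially the same route as the paper: starting from the modulus lower bound of Lemma~\ref{lem5.1-2308}, you subtract from $\Gamma_j$ the subfamily of long curves and the subfamily of curves with large $\int_\gamma\rho_u\,ds$, the only cosmetic difference being that the paper instead removes the curves on which the oscillation of $u$ itself exceeds a threshold $\varepsilon_j'$ (chosen so that the removed family has modulus exactly $C_F/2$), while you control the upper-gradient integral and transfer to $u$ via the anchor point in $F\cap A_j$. One trivial fix: for the finitely many indices $k_F\le j<k_1$ you should select $\gamma_j$ from $\Gamma_j^{\mathrm{short}}$ (which is nonempty for every $j\ge k_F$) rather than from all of $\Gamma_j$, so that the claim $\ell(\gamma_j)\simeq 2^j$ holds for each $j\ge k_F$ as stated.
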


\begin{proof}
By Lemma \ref{lem5.1-2308}, there exist  constants $C_F>0$ and $k_F\in\mathbb N$ so that
\begin{equation}\label{eq5.5-0709}
\text{\rm Cap}_Q(F\cap A_{j}, A_{j+2}, G_j)\geq C_F
\end{equation} 
for $j\ge k_F$.
Let $\Gamma_j$ be the family of all rectifiable curves $\gamma$ in $G_j$ connecting $F\cap A_j$ and $A_{j+2}$. 
We may assume that $c=0$, since the proof for $c\neq 0$ is the same.

Since $\lim_{x\to+\infty,x\in F}u=0$, there is a sequence $\varepsilon_j>0$ with $\varepsilon_j\to0$ as $j\to+\infty$ such that  $|u|_{F\cap G_j}|< \varepsilon_j$.  Let $\varepsilon'_j>0$ be defined by 
\begin{equation}\label{eq5.4-2308}
\varepsilon'_j= \varepsilon_j+\left(\frac{2}{C_F} \int_{G_j}g^Qd\mu \right)^{1/Q}
\end{equation}
where $g\in L_\mu^Q(X)$ is an upper gradient of $u$, and let
$\Gamma_j^c:=\{\gamma\in\Gamma_j:|u|_{\gamma}|<\varepsilon_j'\}$.
For each $\gamma\in \Gamma_j\setminus\Gamma_j^c$, there exists $x\in\gamma\cap G_j$ such that $|u(x)|\ge\varepsilon'_j$.  Hence, it follows that 
\[
\varepsilon'_j-\varepsilon_j\leq \sup_{x\in\gamma}u(x)-\inf_{x\in\gamma}u(x)\leq\int_\gamma gds
\]
for all $\gamma\in\Gamma_j\setminus\Gamma_j^c$.  As such, $\frac{g}{\varepsilon'_j-\varepsilon_j}\chi_{G_j}$ is admissible for computing $\text{\rm Mod}_Q(\Gamma_j\setminus\Gamma_j^c)$. 
By \eqref{eq2.4-2308},\eqref{eq5.5-0709}, and \eqref{eq5.4-2308}, it follows that for $j\ge k_F$,
\begin{align}\label{eq5.7-0709}
C_F&\leq \text{\rm Cap}_Q(F\cap A_{j}, A_{j+2},G_j)=\text{\rm Mod}_Q(\Gamma_j)\leq\text{\rm Mod}_Q(\Gamma_j^c)+\text{\rm Mod}_Q(\Gamma_j\setminus\Gamma_j^c)\nonumber\\   
    &\le\text{\rm Mod}_Q(\Gamma_j^c)+\frac{1}{(\varepsilon'_j-\varepsilon_j)^Q}\int_{G_j}g^Qd\mu=\text{\rm Mod}_Q(\Gamma_j^c)+\frac{C_F}{2},
\end{align}
  and so we have that $
    \text{\rm Mod}_Q(\Gamma_j^c)\ge C_F/2.$ Here the $Q$-modulus of families is defined on $G_j$.
 In particular, $\Gamma_j^c$ is nonempty for such $j$.
Let $\gamma_j\in\Gamma_j^c$.  Noting that $\varepsilon'_j\to 0$ as $j\to\infty$, we have that
\[
\lim_{x\to+\infty,\,x\in\cup_{j\in\mathbb{N}}\gamma_j}u(x)=0.
\]

 Now for $L>0$, let $\Gamma^c_{j,L}:=\{\gamma\in\Gamma^c_j:\ell (\gamma)>L\}$.  Then, $\chi_{G_j}/L$ is admissible for computing $\Mod_Q(\Gamma_{j,L}^c)$, and so by the Ahlfors $Q$-regularity, we have 
\[
\Mod_Q(\Gamma^c_{j,L})\le\mu(G_j)/L^Q\le C_{Q}(2^j/L)^Q
\]
where $C_{Q}$ is the constant of Ahlfors $Q$-regularity.  Thus, 
$\Mod_Q(\Gamma^c_{j,L})<C_F/4$
if $L>(4C_{Q}/C_F)^{1/Q}2^j$.  Since $\Mod_Q(\Gamma^c_j)\ge C_F/2$ for $j\ge k_F$, it then follows that for such $j$, 
\begin{equation}\label{eq3.19-0307}
\Mod_Q(\{\gamma\in\Gamma_j^c:\ell(\gamma)\le(4C_{Q}/C_F)^{1/Q}2^j\})\ge C_F/4.
\end{equation}
As such, we can choose $\gamma_j$ so that $\ell(\gamma_j)\simeq \ell(\gamma_j\cap A_j)\simeq 2^j$. This completes the proof.
\end{proof}
In what follows, given a curve family $\Gamma$, we denote by $|\Gamma|$ the union of the trajectories of the curves in $\Gamma$.
\begin{lemma}
\label{lem5.3-0709} 
Let  $1<Q<+\infty$. 
Suppose that $(X,d,\mu)$ is an  unbounded metric measure space with metric $d$ and Ahlfors $Q$-regular measure $\mu$ supporting a $Q$-Poincar\'e inequality. We assume that $X$ is complete.  Let $u\in \dot N^{1,Q}(X)$, and let $F$ be a thick set such that $\lim_{x\to +\infty,x\in F}u(x)=c$ for some $c\in[-\infty,+\infty]$. Then there is a family  of infinite curves, denoted by $\Gamma^c$, such that \[
\lim_{t\to+\infty}u(\gamma(t))=c
\] for each $\gamma\in\Gamma^c$.  Furthermore for sufficiently large $R>0$, 
the family $\Gamma^c$ satisfies the following properties:
\begin{enumerate}
    \item [{\rm 1.}] $ \mu(B(O,R)\cap|\Gamma^c|)\simeq \mu(A_R\cap |\Gamma^c|)\simeq\mu(B(O,R))$.
     \item [{\rm 2.}] $\ell(\gamma\cap B(O,R))\simeq \ell(\gamma\cap A_R)\simeq R$ for each $\gamma\in\Gamma^c$,
\end{enumerate}
where $A_R:=B(O,R)\setminus B(O,R/2)$.
\end{lemma}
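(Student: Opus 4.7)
The plan is to upgrade the single-annulus rectifiable curves produced by Lemma~\ref{lem5.2-2308} into genuinely infinite curves, by first strengthening the capacity bound of Lemma~\ref{lem5.1-2308} so that both endpoints of each local curve lie in $F$, then splicing pieces at successive dyadic scales, and finally reading off properties (1) and (2) from the modulus and length estimates picked up along the way.

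First I would sharpen Lemma~\ref{lem5.1-2308} to the statement $\mathrm{Cap}_Q(F\cap A_j,\,F\cap A_{j+2},\,G_j)\gtrsim 1$, uniformly for $j\ge k_F$. The proof of Lemma~\ref{lem5.1-2308} uses the outer target only through its Hausdorff content bound from Lemma~\ref{lem:Radius-Content}; since $F$ is thick, the definition~\eqref{thick} supplies the matching bound $\mathcal H^{\alpha}_{+\infty}(F\cap A_{j+2})\gtrsim (2^{j+2})^\alpha$ for every $0<\alpha<Q$, so the case analysis using Lemma~\ref{lem3.1}, Lemma~\ref{lem3.2}, and Proposition~\ref{prop2.3-0806} goes through unchanged. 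Plugging this into the proof of Lemma~\ref{lem5.2-2308} yields, for each $j\ge k_F$, a family $\hat\Gamma_j\subset G_j$ of rectifiable curves of length $\simeq 2^j$ with $\sup_{x\in\gamma}|u(x)-c|<\varepsilon_j'$ (where $\varepsilon_j'\to 0$), endpoints in $F\cap A_j$ and $F\cap A_{j+2}$, and uniform lower modulus bound $\mathrm{Mod}_Q(\hat\Gamma_j)\ge c_0>0$.

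The principal obstacle is the concatenation step: joining $\gamma_j\in\hat\Gamma_j$ (ending in $F\cap A_{j+2}$) with $\gamma_{j+2}\in\hat\Gamma_{j+2}$ (starting in $F\cap A_{j+2}$) requires matching endpoints. My plan is to denote by $P_j^{\mathrm{end}}$ and $P_{j+2}^{\mathrm{start}}$ the two projection sets inside $F\cap A_{j+2}$; a Fuglede-type argument shows both carry positive $Q$-capacity inherited from the uniform modulus lower bound on the respective curve families. The $Q$-Loewner property supplied by Theorem~\ref{thm2.6-0709}, combined with the thickness of $F\cap A_{j+2}$, forces these capacity-positive sets to meet after excising a negligible-modulus subfamily, permitting level-by-level selection. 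Carrying out this matching simultaneously over a set of starting points in $|\hat\Gamma_{k_F}|\cap A_{k_F}$ of measure $\gtrsim\mu(A_{k_F})$ produces the family $\Gamma^c$ of infinite curves along each of which $u\to c$.

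Properties (1) and (2) are then bookkeeping. Testing $\mathrm{Mod}_Q(\hat\Gamma_j)\ge c_0$ against the admissible weight $\chi_{|\hat\Gamma_j|\cap A_k}/(c\cdot 2^k)$ (admissible because every $\gamma\in\hat\Gamma_j$ traverses each adjacent annulus $A_k$ with length $\simeq 2^k$) yields $\mu(|\hat\Gamma_j|\cap A_k)\gtrsim \mu(A_k)$ for $k\in\{j,j+1,j+2\}$; summing over dyadic annuli inside $B(O,R)$ and using Ahlfors $Q$-regularity gives (1). For (2), the upper bound $\ell(\gamma\cap B(O,R))\lesssim R$ is the geometric sum $\sum_{2^j\lesssim R}2^j\lesssim R$, while the lower bound $\ell(\gamma\cap A_R)\gtrsim R$ is automatic since any infinite curve must cross $A_R$, whose inner and outer radii differ by $\simeq R$. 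The matching step in the concatenation is thus the only nonroutine ingredient; everything else is built from modulus and length estimates already in hand.
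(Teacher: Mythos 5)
There is a genuine gap at the concatenation step, which is exactly the crux of this lemma. Your claim that the two endpoint sets $P_j^{\mathrm{end}}$ and $P_{j+2}^{\mathrm{start}}$ ``must meet'' because each carries positive $Q$-capacity is false: two disjoint balls in $F\cap A_{j+2}$ each have positive capacity, so positivity of capacity never forces intersection. If you weaken the claim to ``there is a positive-modulus family of curves \emph{connecting} the two endpoint sets,'' you run into a second obstruction: Theorem~\ref{thm2.7-0709} (and the Loewner property of Theorem~\ref{thm2.6-0709}) applies only to disjoint non-degenerate \emph{continua} with diameters bounded below, and the endpoint sets of a curve family are in general totally disconnected, so neither result applies to them. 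Even if such connectors existed, splicing a curve of $\hat\Gamma_j$, a connector, and a curve of $\hat\Gamma_{j+2}$ would require the connector's endpoints to lie exactly on the chosen trajectories, which reproduces the same matching problem one level down. Your preliminary strengthening of Lemma~\ref{lem5.1-2308} (putting both endpoints in $F$) is harmless but does not address any of this, since the terminal points of $\gamma_j$ and the initial points of $\gamma_{j+2}$ in $F\cap A_{j+2}$ are still generically distinct.

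The paper sidesteps endpoint matching entirely by connecting \emph{curve to curve} rather than endpoint to endpoint: it fixes single curves $\gamma_{j_k}$ from Lemma~\ref{lem5.2-2308} (with $j_k=k_F+10k$), observes that each $\gamma_{j_k}$ is a continuum of diameter comparable to $2^{j_k}$, and applies Theorems~\ref{thm2.6-0709} and~\ref{thm2.7-0709} to the pair of continua $(\gamma_{j_k},\gamma_{j_{k+1}})$ inside an enlarged annular region $G_{j}'$ to get a uniform lower modulus bound for the connecting family $\Gamma_{j}$. Any curve in that family has its endpoints \emph{on} the trajectories $\gamma_{j_k}$ and $\gamma_{j_{k+1}}$, so concatenation is immediate. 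One then repeats the excision argument of Lemma~\ref{lem5.2-2308} to discard the connectors on which $u$ is not close to $c$ and those that are too long, keeping a positive-modulus subfamily $\Gamma'_{j_k,c}$; testing against $2^{-j}\chi_{|\Gamma'_{j,c}|}$ and $2^{-j}\chi_{|\Gamma'_{j,c}|\cap A_j}$ gives property (1), and the length bounds on the pieces give property (2). Your bookkeeping for (1) and (2) is essentially right once a correctly constructed family is in hand, but without the continuum-to-continuum connection device the family $\Gamma^c$ of genuinely infinite curves is never produced.
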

 \begin{proof} 
 We may assume that the constant limit $c$ of $u$ along $F$ equals to 0, since the proof for $c\neq 0$ is the same.  

For each $\mathbb{N}\ni j\geq k_F$, let $\gamma_j$ and $\gamma_{j+10}$ be the curves given by Lemma~\ref{lem5.2-2308}, and let $G_j':=\cup_{i={j-1}}^{j+11}G_i$. We now let $\Gamma_j:=(\gamma_j,\gamma_{j+10}, G_j')$ be the collection of all rectifiable curves in $G_j'$ connecting $\gamma_j$ and $\gamma_{j+10}$.  By Theorems~\ref{thm2.6-0709} and \ref{thm2.7-0709},  there exists a constant $C'>0$ such that $\text{\rm Mod}_Q{(\Gamma_j)}\ge C'$ for all $j$.  Moreover, since $\lim_{\cup_{j\in\mathbb N}\gamma_j\ni x\to+\infty}u(x)=0$, there exists $\varepsilon_j>0$ with $\varepsilon_j\to 0$ as $j\to\infty$ such that $|u|_{\gamma_j\cup\gamma_{j+10}}|<\varepsilon_j$.  Define
\begin{equation}\notag\label{eq5.5-2308}
	\varepsilon'_j= \varepsilon_j+ \left(\frac{100}{{\rm Mod}_Q(\Gamma_j)} \int_{G_j'}g^Qd\mu \right),
\end{equation}
where $g\in L^Q_\mu(X)$ is an upper gradient of $u$.  Note that $\varepsilon'_j\to 0$ as $j\to+\infty$.


Letting $\Gamma_{j,c}:=\{\gamma\in \Gamma_j:|u|_\gamma|<\varepsilon'_j\}$, it follows from a similar argument as in Lemma~\ref{lem5.2-2308} that there is a constant $C_0>0$ such that $\Mod_Q(\Gamma_{j,c})\ge C_0/2$.  Again by a similar argument as in Lemma~\ref{lem5.2-2308}, there exists a constant $C_1>0$ depending on $C_0$ and $Q$ so that 
\begin{equation}\label{eq3.21-0307}
\Mod_Q(\{\gamma\in\Gamma_{j,c}:\ell(\gamma)\le C_1 2^j\})\ge C_0/4.
\end{equation}
Let $\Gamma_{j,c}'$ denote this family of curves, i.e. $\Gamma_{j,c}':=\{\gamma\in\Gamma_{j,c}:\ell(\gamma)\le C_1 2^j\}$.  Each curve in $\Gamma_{j,c}'$ has length at least $2^j$, and so that for all measurable sets $A\supset |\Gamma_{j,c}'|$, $\frac{1}{2^j}\chi_{A}$ is admissible for computing $\Mod_Q(\Gamma_{j,c}')$.  Hence, $C_0/4\le\Mod_Q(\Gamma_{j,c}')\le\mu(|\Gamma_{j,c}'|)/(2^j)^Q$,
and so it follows that 
\begin{equation}\label{eq:volume}
\mu(B(O,2^j))\simeq\mu(|\Gamma_{j,c}'|).
\end{equation}

For each $k\in\N$, let $j_k=k_F+10k$. 
 Choosing a curve $\gamma_{j_k}'$ from $\Gamma_{j_k,c}'$ for each $k\in\mathbb{N}$, we have that 
\[
\lim_{\cup_{k\in\mathbb{N}}\gamma'_{j_k}\ni x\to+\infty}u(x)=0,
\]
and so joining each $\gamma'_{j_k}$ to $\gamma_{j_k}$ and $\gamma_{j_{k+1}}$  yields an infinite curve $\gamma$ such that $\lim_{t\to+\infty}u(\gamma(t))=0$.  We then obtain the desired family $\Gamma^c$ of infinite curves by repeating this process using all of the possible connecting curves in $\Gamma'_{j_k,c}$ for each $k\in\mathbb{N}$.  In doing so, we see from \eqref{eq:volume} that $\mu(|\Gamma^c|\cap B(0,R))\simeq\mu(B(O,R))$ for all sufficiently large $R>20$. Similarly, we have 
$C_0/4\le\Mod_Q(\Gamma_{j,c}')\le\mu(|\Gamma_{j,c}'|\cap A_j)/(2^j)^Q$,
and so it follows that $\mu(B(O,2^j))\simeq\mu(|\Gamma_{j,c}'|\cap A_j)$ and hence $\mu(|\Gamma^c|\cap B(0,R)\setminus B(O,R/2))\simeq\mu(B(O,R))$ for all sufficiently large $R>20$.

Let $\wtil C:=16 \lambda\cdot \bar c_2\cdot C$.  We note that $\wtil C2^j$ is the length of the outer radius of the annulus $G_j$.  Let $R>\wtil C 2^{k_F+20}$, and let $k_0\in\N$ be such that $\wtil C2^{k_F+10k_0}\le R<\wtil C 2^{k_F+10(k_0+1)}$.  Since each $\gamma_{j_k}$ has length comparable to $2^{j_k}$ by Lemma \ref{lem5.2-2308}, as do the connecting curves $\gamma_{j_k}'$, constructed above, we have that  
\[
\ell(\gamma\cap G_{j_k})\simeq 2^{j_k}\simeq\ell(\gamma\cap G'_{j_k})
\]
for all $k\in\N$ and each $\gamma\in\Gamma^c$. Therefore, it follows that 
\begin{align*}
    R\simeq 2^{j_{k_0}}\leq \ell (\gamma\cap  B(O,R)\setminus B(O,R/2))\lesssim \ell(\gamma\cap G_{j_{k_0}}) \le\sum_{k=1}^{k_0+1}\ell(\gamma\cap G_{j_k}')\lesssim\sum_{k=1}^{k_0+1} 2^{j_k}\lesssim 2^{j_{k_0}}\simeq R,
\end{align*}
and hence, each $\gamma\in\Gamma^c$ satisfies  $\ell(\gamma\cap B(O,R))\simeq \ell(\gamma\cap B(O,R)\setminus B(O,R/2))\simeq R$.
This completes the proof.\qedhere

 \end{proof} 

From the preceding lemmas, we now prove Theorem~\ref{thm1.3-2208} and Theorem~\ref{thm4-2705}.

 \begin{proof}
 [Proof of Theorem \ref{thm1.3-2208}]

${\rm I.}\Rightarrow {\rm II.}$ is given by Lemma \ref{thm1.2-0505}.  

${\rm II.}\Rightarrow {\rm III.}$ is given by Lemma \ref{lem4.5-0703}. 


${\rm III.}\Rightarrow {\rm IV.}$ is given by Lemma \ref{lem5.3-0709}.

${\rm IV.}\Rightarrow {\rm I.}$ This is immediate.

Finally, to show uniqueness of the limit, it suffices to prove that there is only one $c\in[-\infty,+\infty]$ for which ${\rm II.}$ holds true.
Suppose that there exist $c,c'\in[-\infty,+\infty]$ and thin sets $E,E'\subset X$ such that $\lim_{E\not\ni x\to+\infty}u(x)=c$ and $\lim_{E'\not\ni x\to+\infty}u(x)=c'$.  Letting $A_j=B(O,2^{j+1})\setminus B(O,2^j)$, $F=X\setminus E$, and $F'=X\setminus E'$, we claim that $F\cap F'\cap A_j\ne\varnothing$ for all sufficiently large $j\in\mathbb N$.  Indeed, 
$E\cup E'$ is a thin set by Lemma~\ref{prop2.5-0605} (4.), and so by Lemma~\ref{prop2.5-0605} (1.), there exists $j_0\in\N$ such that $A_j\setminus (E\cup E')\ne\varnothing$ for all $j\ge j_0$.  Hence, $F\cap F'\cap A_j\ne\varnothing$ for all such $j$, and so we have that $c=c'$.  
 \end{proof}

 \begin{proof}[Proof of Theorem \ref{thm4-2705}]
 We have the following chain of implications:
 \begin{itemize}
     \item $1.\Leftrightarrow 2.\Leftrightarrow 3.$ is given by Theorem~\ref{thm1.3-2208}.
     \item Assume that $2.$ holds.  Then 
	\[
		\lim_{t\to{+\infty}} u(\gamma(t))=c
	\]
	for all $\gamma\in\Gamma^{+\infty}$ with $\#\{j: \gamma\cap E\cap A_{\lambda 2^{j+1}}\neq\emptyset\}<{+\infty}$.
 Roughly speaking, for all infinite curves $\gamma$ that do not meet $E$ at infinity, we have the limit at infinity along such $\gamma$ exists and is equal to $c$.
{
 Moreover, 
	by the third claim  of Lemma \ref{prop2.5-0605}, we have 
	\[
		\lim_{k\to{+\infty}}\sum_{j=k}^{+\infty}\mathcal H^\alpha_{+\infty, d_{\mathbb S}}(p(\Gamma^{\mathcal O}(\mathbb S')\cap E\cap A_{\lambda 2^{j+1}}))=0 \text{\rm \ \ for all $0<\alpha<Q$}.
	\]  Let $E^*:= \cap_{k\in\mathbb N}\cup_{j=k}^{+\infty} p( \Gamma^{\mathcal O}(\mathbb S')\cap E\bigcap A_{\lambda 2^{j+1}})$. This estimate yields $\mathcal H^\alpha_{+\infty, d_{\mathbb S}}(E^*)=0$ for all $0<\alpha<Q$, and hence $\mathcal H_{d_{\mathbb S}}^\alpha(E^*)=0$ for all $0<\alpha<Q$ by Lemma \ref{lem2.1-0805}. 
	}
	It is clear that $\lim_{t\to{+\infty}}u(\gamma^{\mathcal O}_\xi(t))=c$ for {
	$\xi\in\mathbb S'\setminus E^*$}.  Thus $4.$ holds, and so $2.\Rightarrow 4.$  
    \item $4.\Rightarrow 5.$ is immediate.
    \item {
    $5.\Rightarrow 6.$ follows from the assumption that $\sigma\ll \Ha^{\alpha_0}_{d_{\mathbb S}}$.}
    \item {
    $6.\Rightarrow 1.$ follows since $\Gamma^\mathcal{O}(\mathbb{S}')$ is nonempty by $\sigma(\mathbb S')>0$.}
    \item Finally, the uniqueness of the limit $c$ follows from that same argument as in Theorem~\ref{thm1.3-2208}.\qedhere
 \end{itemize}
	



 \end{proof}

\section{Proof of Theorem \ref{thm1-2705} and Theorem~\ref{thm:p>Q Growth}}
Recall that the scaling constant of the Poincar\'e inequality is denoted by $\lambda$, and we define $A_{\lambda 2^{j+1}}:=B(O,\lambda 2^{j+2})\setminus B(O,\lambda 2^{j+1})$.  For a constant $C\ge 1$, we define $C\cdot A_{\lambda 2^{j+1}}:=B(O,C\lambda 2^{j+3})\setminus B(O,2^{j}/(C\lambda))$.


\begin{lemma}\label{thm1-0705}
Let  $1<Q<+\infty$. 
Suppose that $(X,d,\mu)$ is a complete, unbounded metric measure space with metric $d$ and Ahlfors $Q$-regular measure $\mu$ supporting a $Q$-Poincar\'e inequality. 
 Then for every function $u\in \dot N^{1,Q}(X)$, there is a thin set $E$ such that
 	\[
		\lim_{E\not\ni x\to{+\infty}}\left |u(x) -u_{B(x, |x|/2)}\right |=0.
	\]
 \end{lemma}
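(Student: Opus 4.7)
The plan is to estimate $|u(x)-u_{B(x,|x|/2)}|$ at Lebesgue points by a fractional maximal function of $\rho_u^Q$ at scale $|x|$, then exploit that the $Q$-integral of $\rho_u$ on fattened dyadic annuli is summable.

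First, by Proposition~\ref{thm2.4-2809} together with Lemma~\ref{lem2.1-0805}, the set $N_u$ of non-Lebesgue points of $u$ satisfies $\mathcal H^\alpha_{+\infty}(N_u\cap A_{\lambda 2^{j+1}})=0$ for every $0<\alpha<Q$ and every $j$, hence $N_u$ is thin.  So it suffices to produce a thin set $E$ containing $N_u$ such that $|u(x)-u_{B(x,|x|/2)}|\to 0$ outside $E$.

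Next, fix a Lebesgue point $x\in A_{\lambda 2^{j+1}}$ and set $r_0=|x|/2\simeq 2^{j+1}$, $B_i=B(x,2^{-i}r_0)$.  Since $x$ is a Lebesgue point, the standard telescoping sum together with the $Q$-Poincar\'e inequality and Ahlfors $Q$-regularity gives, as in the proof of Lemma~\ref{lem3.2},
\[
\bigl|u(x)-u_{B(x,|x|/2)}\bigr|\lesssim \sum_{i=0}^{+\infty}2^{-i}r_0\left(\dashint_{\lambda B_i}\rho_u^Q\,d\mu\right)^{1/Q}.
\]
For each $0<\alpha<Q$, extracting a factor $r_0^{\alpha/Q}$ and summing the geometric series $\sum_i 2^{-i\alpha/Q}$ bounds this by $C_\alpha\,r_0^{\alpha/Q}\mathcal M_{Q-\alpha,\lambda r_0}^{1/Q}(\rho_u^Q)(x)$, so
\[
\bigl|u(x)-u_{B(x,|x|/2)}\bigr|^Q\lesssim_\alpha (2^{j+1})^\alpha\,\mathcal M_{Q-\alpha,\lambda 2^{j+1}}(\rho_u^Q)(x).
\]

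Now set $a_j:=\int_{c_2\cdot A_{\lambda 2^{j+1}}}\rho_u^Q\,d\mu$; this is summable in $j$ since $\rho_u\in L^Q_\mu(X)$.  Applying Lemma~\ref{in-fact} with any (say empty) choice of $F$ so the displayed $u$-conclusion is vacuous, we obtain a non-increasing sequence $b_j>0$ with $b_j\to 0$ and $\sum_j a_j b_j^{-Q}<+\infty$.  Define
\[
E_j:=\bigl\{x\in A_{\lambda 2^{j+1}}:\bigl|u(x)-u_{B(x,|x|/2)}\bigr|>b_j\bigr\}.
\]
By the previous display, for each $0<\alpha<Q$,
\[
E_j\subset\bigl\{x\in A_{\lambda 2^{j+1}}:\mathcal M_{Q-\alpha,\lambda 2^{j+1}}(\rho_u^Q)(x)\gtrsim_\alpha b_j^Q(2^{j+1})^{-\alpha}\bigr\}.
\]
Applying Theorem~\ref{fractional-maximal-theorem} to $\rho_u^Q$ (extended by zero outside $c_2\cdot A_{\lambda 2^{j+1}}$) with $\beta=Q-\alpha$ and $A=A_{\lambda 2^{j+1}}$, together with $\mu(A_{\lambda 2^{j+1}})\simeq(2^{j+1})^Q$, yields
\[
\frac{\mathcal H^\alpha_{+\infty}(E_j)}{(2^{j+1})^\alpha}\lesssim_\alpha b_j^{-Q}\int_{c_2\cdot A_{\lambda 2^{j+1}}}\rho_u^Q\,d\mu=b_j^{-Q}a_j.
\]

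Finally, set $E:=N_u\cup\bigcap_{k\in\N}\bigcup_{j\ge k}E_j$.  For each $0<\alpha<Q$,
\[
\sum_{j\ge k}\frac{\mathcal H^\alpha_{+\infty}(E\cap A_{\lambda 2^{j+1}})}{(2^{j+1})^\alpha}\lesssim_\alpha\sum_{j\ge k}a_jb_j^{-Q}\xrightarrow[k\to+\infty]{}0,
\]
so $E$ is a thin set (using Lemma~\ref{prop2.5-0605}(4) to combine with $N_u$).  By construction, if $x\notin E$ and $|x|$ is large enough, then $x$ is a Lebesgue point, lies in some $A_{\lambda 2^{j+1}}$ with $j$ large, and $x\notin E_j$, so $|u(x)-u_{B(x,|x|/2)}|\le b_j\to 0$.

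The only mild obstacle is keeping the constants in the fractional maximal estimate and in the telescoping sum uniform in $\alpha$ in the sense that each fixed $\alpha$ contributes a finite constant (the $\alpha$-dependence enters through $1/(1-2^{-\alpha/Q})$ and through the Ahlfors-regularity factor in Theorem~\ref{fractional-maximal-theorem}), which is harmless since the thin set condition is required separately for each $\alpha\in(0,Q)$.
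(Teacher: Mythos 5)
Your argument is correct and follows essentially the same route as the paper: choose a decaying sequence via Lemma~\ref{in-fact} from the summability of $\int_{C\cdot A_{\lambda 2^{j+1}}}\rho_u^Q\,d\mu$, bound $|u(x)-u_{B(x,|x|/2)}|$ at Lebesgue points by a telescoping/fractional-maximal-function estimate (which is exactly the content of Lemma~\ref{lem3.2}, here re-derived inline), and apply Theorem~\ref{fractional-maximal-theorem} to get the Hausdorff-content bound on each $E_j$, concluding that $\bigcap_k\bigcup_{j\ge k}E_j$ is thin. The only cosmetic difference is that you adjoin $N_u$ to $E$ explicitly, whereas the paper absorbs the non-Lebesgue points inside Lemma~\ref{lem3.2} via Proposition~\ref{thm2.4-2809}; both are fine.
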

 
 \begin{proof}
 Let $+\infty>C\ge 1$ be chosen so that for all $j\in\N$ and $x\in A_{\lambda 2^{j+1}}$, we have that $B(x,|x|/2)\subset C\cdot A_{\lambda 2^{j+1}}$.  Since $u\in \dot N^{1,Q}(X)$ and by \eqref{eq3.1-2805} in Lemma \ref{in-fact}, there is a non-increasing sequence $\{a_j\}_{j\in\mathbb N}$ with $a_j>0$ such that
 	\[
		\lim_{j\to{+\infty}}a_j=0\text{\rm \ \ and\ \ }  \sum_{j=1}^{+\infty} a_{j}^{-Q}\int_{C\cdot A_{\lambda 2^{j+1}}}\rho_u^Qd\mu<{+\infty}.
	\]
	 
	Let $E_j:=\{x\in A_{\lambda 2^{j+1}}:|u(x)-u_{B(x,|x|/2)}|>a_j\}$. Let $0<\alpha<Q$. By our choice of $C$, Lemma \ref{lem3.2} yields
	\[
		\frac{\mathcal H^\alpha_{+\infty}(E_j)}{(2^j)^\alpha }\lesssim \frac{1}{(1-2^{-\frac{\alpha}{Q}})^Q} a_j^{-Q}\int_{C\cdot A_{\lambda 2^{j+1}}}\rho_u^Qd\mu.
	\]
	  Let $E:=\bigcup_{j=1}^{+\infty} E_j$. Combining the above estimates, we have that
	\[
	 \lim_{k\to{+\infty}}\sum_{j=k}^{+\infty}
	 \frac{\mathcal H^\alpha_{+\infty}(E_j)}{(\lambda 2^{j+1})^\alpha }\lesssim \frac{1}{(1-2^{-\frac{\alpha}{Q}})^Q} \lim_{k\to+\infty}\sum_{j=k}^{+\infty}a_j^{-Q}\int_{C\cdot A_{\lambda 2^{j+1}}}\rho_u^Qd\mu=0.
	\] Then $E$ is a thin set, and by our construction of $E$, we have that  $\lim_{E\not\ni x\to{+\infty}}|u(x)-u_{B(x,|x|/2)}|=0$, proving the claim.
 \end{proof}
 \begin{lemma}\label{thm3.3-2605}
Let  $1<Q<+\infty$. 
Suppose that $(X,d,\mu)$ is a complete, unbounded metric measure space with metric $d$ and Ahlfors $Q$-regular measure $\mu$ supporting a $Q$-Poincar\'e inequality.
Then for every function $u\in\dot N^{1,Q}(X)$, there is a thin set $E$ such that
 \begin{equation}\label{eq3.2-2805}
 \lim_{E\not\ni x\to{+\infty}}\frac{|u(x)-u_{B(x,1)}|}{\log^{\frac{Q-1}{Q}}(|x|)}=0.
 \end{equation}
 \end{lemma}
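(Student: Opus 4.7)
My plan is to adapt the argument in the proof of Lemma~\ref{thm1-0705} by anchoring the telescoping at the small scale $r=1$ rather than at $r\simeq|x|/2$, controlling the resulting oscillation by a fractional maximal function of $\rho_u^Q$ at small scales and then invoking Theorem~\ref{fractional-maximal-theorem}.  The essential difference from Lemma~\ref{lem3.2} is that the telescoping descends from scale $1$ down to scale $0$ instead of from scale $2^j$ down to scale $0$, which removes the amplifying factor $(2^j)^\alpha$ that appears in Lemma~\ref{lem3.2}.

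\textbf{Pointwise bound.} At any Lebesgue point $x$ of $u$, I would decompose
\[
|u(x) - u_{B(x,1)}| \leq \sum_{i=0}^{+\infty} |u_{B(x,2^{-i})} - u_{B(x,2^{-i-1})}|,
\]
and bound each summand by a constant multiple of $2^{-i}(\dashint_{B(x,\lambda 2^{-i})}\rho_u^Q\,d\mu)^{1/Q}$ via doubling and the $Q$-Poincar\'e inequality.  Splitting $2^{-i}=(2^{-i})^{\alpha/Q}(2^{-i})^{(Q-\alpha)/Q}$ for a fixed $\alpha\in(0,Q)$ and summing the convergent geometric series $\sum_i (2^{-i})^{\alpha/Q}$ yields
\[
|u(x) - u_{B(x,1)}|^Q \lesssim \mathcal M_{Q-\alpha,\lambda}\rho_u^Q(x),
\]
with constant depending on $\alpha$ and $\lambda$.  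Crucially, $\mathcal M_{Q-\alpha,\lambda}$ sees only radii at most $\lambda$, so for $x\in A_{\lambda 2^{j+1}}$ this maximal function depends only on $\rho_u^Q$ in a $\lambda$-neighborhood $N_j$ of $A_{\lambda 2^{j+1}}$.

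\textbf{Thin set construction.} The neighborhoods $N_j$ have bounded overlap, so $c_j:=\int_{N_j}\rho_u^Q\,d\mu$ satisfies $\sum_{j\in\mathbb N} c_j<+\infty$.  Applying the sequence construction used in the proof of Lemma~\ref{in-fact} to $\{c_j\}$, I would select a non-increasing positive sequence $\{a_j\}$ with $a_j\to 0$ and $\sum_j a_j^{-Q}c_j<+\infty$.  Define
\[
E_j := \Big\{x\in A_{\lambda 2^{j+1}} : |u(x) - u_{B(x,1)}| > a_j \log^{(Q-1)/Q}(\lambda 2^{j+1})\Big\},\quad E := N_u \cup \bigcup_{j\in\mathbb N} E_j.
\]
Applying Theorem~\ref{fractional-maximal-theorem} to $f=\rho_u^Q\chi_{N_j}$ with $\beta=Q-\alpha$ and combining with the pointwise bound yields
\[
\mathcal H^{\alpha}_{+\infty}(E_j) \lesssim \frac{c_j}{a_j^Q \log^{Q-1}(\lambda 2^{j+1})}
\]
for each $\alpha\in(0,Q)$ and each sufficiently large $j$.

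\textbf{Thinness and conclusion.} For every $\alpha\in(0,Q)$, using $\mathcal H^\alpha_{+\infty}(N_u)=0$ from Proposition~\ref{thm2.4-2809} together with Lemma~\ref{lem2.1-0805} and the disjointness of the annuli,
\[
\sum_{j\geq k}\frac{\mathcal H^{\alpha}_{+\infty}(E\cap A_{\lambda 2^{j+1}})}{(\lambda 2^{j+1})^\alpha} \lesssim \sum_{j\geq k}\frac{a_j^{-Q} c_j}{(\lambda 2^{j+1})^\alpha j^{Q-1}} \leq \sum_{j\geq k} a_j^{-Q} c_j,
\]
which tends to $0$ as $k\to+\infty$.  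Hence $E$ is a thin set.  For $x\notin E$ with $|x|$ large, $x$ belongs to some $A_{\lambda 2^{j+1}}\setminus E_j$ with $j\to+\infty$ as $|x|\to+\infty$, so $|u(x) - u_{B(x,1)}|/\log^{(Q-1)/Q}|x| \lesssim a_j \to 0$, giving \eqref{eq3.2-2805}.  The main technical point is the localization of the fractional maximal function to the $\lambda$-neighborhood $N_j$, which replaces $\|\rho_u\|_Q^Q$ on the right-hand side of the Hausdorff content estimate by the summable quantity $c_j$; without this localization the tail sum verifying thinness would not vanish.
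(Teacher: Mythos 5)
Your proof is correct, but it takes a genuinely different route from the paper's. The paper inserts the intermediate average $u_{B(x,|x|/2)}$ and splits $E_j$ accordingly: the piece $|u(x)-u_{B(x,|x|/2)}|$ is handled by Lemma~\ref{thm1-0705} (telescoping from scale $|x|/2$ down to $0$, which produces the $(2^j)^\alpha$ weight of Lemma~\ref{lem3.2} matched against the $(\lambda 2^{j+1})^\alpha$ normalization in the thinness condition), while the piece $|u_{B(x,|x|/2)}-u_{B(x,1)}|$ is controlled by a chain of roughly $\log|x|$ dyadic balls between scales $|x|/2$ and $1$; H\"older's inequality on that chain is precisely what generates the factor $\log^{(Q-1)/Q}(|x|)$ that cancels the denominator. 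You instead telescope once, from scale $1$ down to $0$, and your two observations are exactly the right ones: anchoring at unit scale kills the $(2^j)^\alpha$ amplification in Lemma~\ref{lem3.2}, and it localizes $\mathcal M_{Q-\alpha,\,\cdot\,}\rho_u^Q$ to a bounded neighborhood $N_j$ of the annulus, so that the boundedly overlapping quantities $c_j=\int_{N_j}\rho_u^Q\,d\mu$ are summable and Lemma~\ref{in-fact}'s diagonal sequence applies. The application of Theorem~\ref{fractional-maximal-theorem} is legitimate since $\mathcal M_{Q-\alpha,\lambda}f\le\mathcal M_{Q-\alpha,\mathrm{diam}(A_{\lambda 2^{j+1}})}f$ for large $j$, the same convention used in Lemma~\ref{lem3.2}, and including $N_u$ in $E$ is harmless by Proposition~\ref{thm2.4-2809} and Lemma~\ref{lem2.1-0805}. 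Notice that in your argument the logarithm in the denominator is never actually used: dropping it from the definition of $E_j$ still yields $\mathcal H^{\alpha}_{+\infty}(E_j)\lesssim a_j^{-Q}c_j$ and a vanishing tail sum, so your route in fact proves the stronger statement $\lim_{E\not\ni x\to+\infty}|u(x)-u_{B(x,1)}|=0$, of which the lemma is an immediate consequence. What the paper's longer route buys is that its two halves (Lemma~\ref{thm1-0705} and the logarithmic chain estimate) are reused elsewhere, e.g.\ in the proof of Theorem~\ref{thm1.1-3105} and in Lemma~\ref{lem3.4-2805}; what yours buys is a shorter, sharper, and more self-contained argument.
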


 \begin{proof}
	 Since $u\in \dot N^{1,Q}(X)$ and by Lemma \ref{in-fact}, there is a sequence $\{a_j\}_{j\in\mathbb N}$ with $a_j>0$ such that
 	\begin{equation}\label{eq3.1-1305}
 	\lim_{j\to{+\infty}}a_j=0 \text{\rm \ \ and \ \ } \sum_{j=1}^{+\infty} a_{j}^{-Q}\int_{3\cdot A_{\lambda 2^{j+1}}}\rho_u^Qd\mu<{+\infty}.
 	\end{equation}
	For each $j\in\N$, let 
	\[
			E_j:=\left\{x\in A_{\lambda 2^{j+1}}:\frac{|u(x)-u_{B(x,1)}|
	}{\log^{\frac{Q-1}{Q}}(|x|)}>a_j\right\}.
	\]	
	It is clear that \eqref{eq3.2-2805} holds with $E:=\bigcup_{j=1}^{+\infty} E_j$, and so it remains to show that $E$ is a thin set. 
	For each $x\in E_j$, we have that
	\begin{align*}
	a_j<\frac{|u(x)-u_{B(x,1)}|}{\log^{\frac{Q-1}{Q}}(|x|)}\leq& |u(x)-u_{B(x,|x|/2)}| + \frac{|u_{B(x,|x|/2)}-u_{B(x,1)}|}{\log^{\frac{Q-1}{Q}}(|x|)}.
	\end{align*}
	Then $E_j\subset E_j^1\bigcup E_j^2$, where  $E_j^1:=\{x\in A_{\lambda 2^{j+1}}: |u(x)-u_{B(x,|x|/2)}|\geq a_j/2\}$ and
 \[
 E_j^2:=\left\{x\in A_{\lambda 2^{j+1}}: \frac{|u_{B(x,|x|/2)}{-u_{B(x,1)}}|}{\log^{\frac{Q-1}{Q}}(|x|)}\geq a_j/2 \right\}.
 \] 
 By the proof of Lemma~\ref{thm1-0705}, we have that $E^1:=\bigcup_{j=1}^{+\infty} E_j^1$ is a thin set.
 
By the claim ($4.$) of Lemma~\ref{prop2.5-0605}, it suffices to prove that $E^2:=\bigcup_{j=1}^{+\infty} E_j^2$ is a thin set. For each $x\in X$, let $k_x\in \mathbb N$ be such that $1\leq 2^{-k_x} |x|/2\leq 2$, and so $k_x+1 \lesssim \log(|x|)$ if $|x|$ is sufficiently large. For $x\in E_j^2$, we have 
	\begin{align*}
	\frac{a_j}{2}\leq \frac{|u_{B(x,|x|/2)}{-u_{B(x,1)}}|}{\log^{\frac{Q-1}{Q}}(|x|)}
	 &\leq\frac{1}{\log^{\frac{Q-1}{Q}}(|x|)}\left(|u_{B(x,1)}-u_{B(x,2^{-(k_x+1)}|x|)}|+\sum_{i=1}^{k_x}|u_{B(x,2^{-i}|x|)}-u_{B(x,2^{-i-1}|x|)}|\right)\\
    &\lesssim\frac{1}{\log^{\frac{Q-1}{Q}}(|x|)}\sum_{i=1}^{k_x+1}\left(\int_{\lambda B(x,2^{-i}|x|)}\rho_u^Q\,d\mu\right)^{1/Q}\\  
    &\le\frac{1}{\log^{\frac{Q-1}{Q}}(|x|)}\left(\sum_{i=1}^{k_x+1}1\right)^{\frac{Q-1}{Q}}\left(\sum_{i=1}^{k_x+1}\int_{\lambda B(x,2^{-i}|x|)}\rho_u^Q\,d\mu\right)^{1/Q}\\
    &\lesssim\left(\sum_{i=1}^{k_x+1}\int_{\lambda B(x,2^{-i}|x|)}\rho_u^Q\,d\mu\right)^{1/Q}.
    \end{align*}
Here we have used the $Q$-Poincar\'e inequality along with Ahlfors $Q$-regularity, H\"older's inequality, and the fact that $k_x+1\lesssim\log(|x|)$. Let $0<\alpha<Q$. Again by Ahlfors $Q$-regularity, we have that for 
all $x\in E^2_j$, 
\begin{align*}
a_j^Q\lesssim\sum_{i=1}^{k_x+1}(2^{-i}|x|)^\alpha(2^{-i}|x|)^{Q-\alpha}\dashint_{\lambda B(x,2^{-i}|x|)}\rho_u^Q\,d\mu&\le\mathcal M_{Q-\alpha,\lambda2^{j+1}}\rho_u^Q(x)\sum_{i=1}^{k_x+1}(2^{-i}|x|)^\alpha\\
    &\lesssim \frac{1}{1-2^{-\alpha}} (\lambda 2^{j+1})^\alpha\mathcal M_{Q-\alpha,\lambda2^{j+1}}\rho_u^Q(x),
\end{align*}
where the last inequality holds since $|x|\le \lambda 2^{j+1}$.
By applying Theorem 
\ref{fractional-maximal-theorem} applied to $\rho_u\chi_{3\cdot A_{\lambda 2^{j+1}}}$, the above estimate yields
	\[
		\mathcal H^{\alpha}_{+\infty}(E_j^2)\lesssim  \frac{1}{1-2^{-\alpha}} \frac{(\lambda 2^{j+1})^\alpha}{a_j^Q}\int_{3\cdot A_{\lambda 2^{j+1}}}\rho_u^Qd\mu.
	\]
From \eqref{eq3.1-1305}, we now obtain  
\[
\lim_{k\to{+\infty}}\sum_{j=k}^{+\infty} 
\frac{\mathcal H^\alpha_{+\infty}(E_j^2)}{(\lambda 2^{j+1})^\alpha}\lesssim  \frac{1}{1-2^{-\alpha}}  \lim_{k\to{+\infty}}\sum_{j=k}^{+\infty} \frac{1}{a_j^Q}\int_{3\cdot A_{\lambda 2^{j+1}}}\rho_u^Qd\mu=0,
\]	
and so $E^2$ is a thin set, completing the proof.
\end{proof}

Recall that a metric space $(X,d)$ is called geodesic if every pair of points in $(X,d)$  can be joined by a curve whose length is the distance between the points.

\begin{lemma}\label{lem3.4-2805}
Let  $1<Q<+\infty$. 
Suppose that $(X,d,\mu)$ is an  unbounded geodesic metric measure space with metric $d$ and Ahlfors $Q$-regular measure $\mu$ supporting a $Q$-Poincar\'e inequality. We assume that $X$ is complete, and let $c_2\ge 1$ and $\overline {c_2}\ge 1$ be as in Definition~\ref{dyadic} and Proposition~\ref{prop2.3-0806}, respectively.
 Then there exists a finite constant $C_1\ge 1$ such that for all $u\in\dot N^{1,Q}(X)$ and for all $x_0,x_1\in X$ satisfying $|x_0|>8c_2$ and $|x_1|/|x_0|\ge |x_0|$, the following holds:

 \begin{equation}\label{eq3.4-2805 SECOND ATTEMPT}
	|u_{B(x_0,1)}-u_{B(x_1,1)}| \leq C_1\left(\log \left( \frac{|x_1|}{|x_0|}\right)\right)^{\frac{Q-1}{Q}}  \|\rho_u\|_{L^Q_\mu\left(X\setminus B\left(O,\frac{|x_0|}{2{\overline{c_2}c_2}} \right)\right)}.
	\end{equation}
\end{lemma}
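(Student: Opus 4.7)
The plan is to estimate $|u_{B(x_0,1)}-u_{B(x_1,1)}|$ by a single chain of ball averages spanning the dyadic annuli around $O$ between scales $|x_0|$ and $|x_1|$, then apply Poincar\'e inequality at each link and finish with H\"older. Since $X$ is geodesic and $|x_1|\ge|x_0|^2$, I would fix a geodesic from $x_0$ to $x_1$ and pick intermediate points $y_0=x_0,y_1,\ldots,y_K\simeq x_1$ on it with $|y_k|\simeq 2^k|x_0|$ and $K\simeq\log_2(|x_1|/|x_0|)$. The ``backbone'' balls $\widetilde B_k:=B(y_k,2^k|x_0|/4)$ then lie in pairwise disjoint dyadic annuli around $O$, and between consecutive $\widetilde B_k$ and $\widetilde B_{k+1}$ I would apply Proposition~\ref{prop2.3-0806} (with $r_2/r_1=2$) to obtain a uniformly bounded chain of balls of radius $\simeq 2^k|x_0|$ inside a single annular region $G_k:=B(O,C\cdot 2^{k+1}|x_0|)\setminus B(O,2^{k-1}|x_0|/C)$ at scale $2^k|x_0|$. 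The $Q$-Poincar\'e inequality and Ahlfors $Q$-regularity will then give
\[
|u_{\widetilde B_k}-u_{\widetilde B_{k+1}}|\lesssim\Bigl(\int_{G_k}\rho_u^Q\,d\mu\Bigr)^{1/Q},
\]
so that summing over $k$ and applying H\"older to the $K\simeq\log(|x_1|/|x_0|)$ terms, together with the bounded overlap of the $G_k$, yields the main contribution
\[
|u_{\widetilde B_0}-u_{\widetilde B_K}|\lesssim\bigl(\log(|x_1|/|x_0|)\bigr)^{(Q-1)/Q}\|\rho_u\|_{L^Q(X\setminus B(O,|x_0|/(2\overline{c_2}c_2)))}.
\]

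It then remains to relate $u_{B(x_0,1)}$ to $u_{\widetilde B_0}$ and symmetrically $u_{B(x_1,1)}$ to $u_{\widetilde B_K}$. For this I would use the short nested telescopes $B(x_0,1)\subset B(x_0,2)\subset\cdots\subset\widetilde B_0$ (and likewise at $x_1$), which involve $\simeq\log|x_0|$ Poincar\'e steps at $x_0$ and $\simeq\log|x_1|$ at $x_1$. The key role of the hypothesis $|x_1|/|x_0|\ge|x_0|$ is that it forces both $\log|x_0|$ and $\log|x_1|$ to be comparable to $\log(|x_1|/|x_0|)$, so the number of total links in the full chain is still $\simeq\log(|x_1|/|x_0|)$ and these endpoint terms can be absorbed into the main estimate.

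The main obstacle will be estimating the short nested telescopes at $x_0$ and $x_1$ with the correct exponent $(Q-1)/Q$: a na\"ive H\"older bound applied to a sum of $\log|x_0|$ nested $(\int_{\lambda B(x_0,2^i)}\rho_u^Q\,d\mu)^{1/Q}$ terms gives only $\log|x_0|\cdot\|\rho_u\|_{L^Q}$ in the worst case, because the enlargements $\lambda B(x_0,2^i)$ overlap pointwise up to order $\log|x_0|$ near $x_0$. I plan to overcome this by invoking the Trudinger-type self-improvement of the $Q$-Poincar\'e inequality, which is available under our hypotheses (completeness, Ahlfors $Q$-regularity and $Q$-Poincar\'e), to obtain directly the sharp bound $|u_{B(x_0,1)}-u_{\widetilde B_0}|\lesssim(\log|x_0|)^{(Q-1)/Q}\|\rho_u\|_{L^Q(\lambda\widetilde B_0)}$ (and symmetrically at $x_1$); combining this with the backbone estimate and the comparison $\log|x_0|,\log|x_1|\lesssim\log(|x_1|/|x_0|)$ completes the proof with $C_1$ depending only on the structural constants.
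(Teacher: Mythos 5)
Your global strategy coincides with the paper's: a dyadic backbone of ball averages at scales $2^k|x_0|$ between $|x_0|$ and (roughly) $|x_1|$, one Poincar\'e application per link, and H\"older together with the bounded overlap of the annular regions $G_k$ to extract the exponent $\tfrac{Q-1}{Q}$; the hypothesis $|x_1|/|x_0|\ge|x_0|$ is used in both arguments exactly as you say, to force $\log|x_0|,\log|x_1|\lesssim\log(|x_1|/|x_0|)$ so that the endpoint contributions can be absorbed. Where you genuinely diverge is in the endpoint telescopes from scale $1$ up to scale $\simeq|x_0|$ (resp.\ $\simeq|x_1|$). You correctly identify the real trap here — the naive concentric telescope $B(x_0,2^i)$ loses the exponent because the dilated balls all contain a neighbourhood of $x_0$ — and you resolve it with the Trudinger-type exponential self-improvement of the $Q$-Poincar\'e inequality. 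That works: Jensen's inequality applied to $t\mapsto\exp(t^{Q/(Q-1)})$ over $B(x_0,1)\subset B(x_0,r)$, together with Ahlfors regularity ($\mu(B(x_0,r))/\mu(B(x_0,1))\simeq r^Q$), converts the exponential estimate into $|u_{B(x_0,1)}-u_{B(x_0,r)}|\lesssim(\log r)^{(Q-1)/Q}\|\rho_u\|_{L^Q(\lambda B(x_0,r))}$, and the Trudinger inequality is available under the standing hypotheses. The paper instead re-runs the annular chaining centered at $x_0$ (points $b_i$ with $d(x_0,b_i)=2^i$, chains confined to the boundedly overlapping annuli $B(x_0,c_22^i)\setminus B(x_0,2^i/c_2)$), which is self-contained given the annular chain property but longer; your route is cleaner at the cost of importing the borderline Sobolev embedding.

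Two small repairs. For the backbone links you invoke Proposition~\ref{prop2.3-0806} ``with $r_2/r_1=2$''; as stated that proposition produces chain balls of radius $r_2/r_1$, so this parameter choice gives balls of radius $2$ rather than $\simeq2^k|x_0|$, and fixing the radius by taking $r_1\simeq1$ would let the chain dip toward the origin and destroy the bounded-overlap argument. The correct tool is the annular chain property of Definition~\ref{dyadic} (valid here by completeness, $Q$-regularity and the $Q$-Poincar\'e inequality via \cite[Theorem~3.3]{K07}), which is exactly what the paper uses for these links; Proposition~\ref{prop2.3-0806} is the variant reserved for joining balls of the large radius $|x_1|/|x_0|$ across widely separated scales. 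Also make sure the dilated endpoint balls $\lambda\widetilde B_0$, $\lambda\widetilde B_K$ stay inside $X\setminus B(O,|x_0|/(2\overline{c_2}c_2))$ — harmless, since $X$ is geodesic (so the Poincar\'e dilation may be taken to be $1$) and you may shrink the radii of $\widetilde B_0$, $\widetilde B_K$ by a fixed factor.
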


\begin{proof}
    Let $k\in\N\cup\{0\}$ be such that 
    \[
    2^k|x_0|\le\frac{|x_1|}{|x_0|}<2^{k+1}|x_0|.
    \]
    For each $0\le i\le k+2$, choose $a_i\in X$ such that $d(O,a_i)=2^i|x_0|$.  This is possible since $X$ is connected as a consequence of the $Q$-Poincar\'e inequality.  

    By \cite[Theorem~3.3]{K07} and Proposition \ref{prop2.3}, we see that $X$ has the annular chain property.  Let $\sigma>4$ be defined by
    \[
   \sigma:=100\left(\frac{c_2}{c_1}+1 \right)
    \]
    where $c_1$ and $c_2$ are the constants given by Definition~\ref{dyadic}. 
    Since $X$ has the annular chain property, we can join the balls $B(a_{i-1},2^i|x_0|/(\sigma c_1))$ and $B(a_{i},2^i|x_0|/(\sigma c_1))$, for $1\le i\le k+2$, by a finite chain of balls $\{B_{i,j}\}_{j=1}^M$ as given by Definition~\ref{dyadic}.  In particular, each $B_{i,j}$ has radius $2^i|x_0|/(\sigma c_1)$ and $B_{i,j}\subset B(O,c_2 2^i|x_0|)\setminus B(O,2^i|x_0|/c_2)$, where $M$, $c_1$, and $c_2$ are the constants given in Definition~\ref{dyadic}, with $M$ depending on $c_1$ and $c_2$.  While each such chain contains at most $M$ balls, a chain may contain strictly fewer than $M$ balls.  However, for simplicity, we still index the chain by $1\le j\le M$, repeating the final ball in the chain if necessary. By the triangle inequality, we have that 
    \begin{align}\label{eq:2406-1}
        |u_{B(x_0,1)}-u_{B(x_1,1)}|\le|u_{B(x_0,1)}-&u_{B(a_0,2|x_0|/(\sigma c_1))}|+\sum_{i=1}^{k+2}\sum_{j=1}^{M-1}\left(|u_{B_{i,j}}-u_{B_{i,j+1}}|\right)+\sum_{i=1}^{k+1}\left(|u_{B_{i,M}}-u_{B_{i+1,1}}|\right)\nonumber\\
        &+|u_{B(a_{k+2},2^{k+2}|x_0|/(\sigma c_1))}-u_{B(x_1,1)}|,
    \end{align}
    and by the $Q$-Poincar\'e inequality, Ahlfors $Q$-regularity, and H\"older's inequality, it follows that 
    \begin{align*}
        \sum_{i=1}^{k+2}\sum_{j=1}^{M-1}(|u_{B_{i,j}}-u_{B_{i,j+1}}|)&+\sum_{i=1}^{k+1}(|u_{B_{i,M}}-u_{B_{i+1,1}}|)\\
        &\lesssim\sum_{i=1}^{k+2}\sum_{j=1}^{M}\left(\left(\int_{B_{i,j}}\rho_u^Qd\mu\right)^{1/Q}+\left(\int_{2 B_{i,M}}\rho_u^Qd\mu\right)^{1/Q}\right)\\
        &\le\sum_{i=1}^{k+2} M^{\frac{Q-1}{Q}}\left(\sum_{j=1}^{M}\int_{ B_{i,j}}\rho_u^Qd\mu\right)^{1/Q}+M(k+2)^{\frac{Q-1}{Q}}\left(\sum_{i=1}^{k+2}\int_{2 B_{i,M}}\rho_u^Qd\mu\right)^{1/Q}.
    \end{align*}
    Note that we may assume the scaling constant of the Poincar\'e inequality to be 1, by the Ahlfors $Q$-regularity of $\mu$ and the assumption that $X$ is a geodesic space, see \cite[Theorem~9.5]{Hei01}.
    
    Since $B_{i,j}\subset B(O,c_2 2^i|x_0|)\setminus B(O,2^i|x_0|/c_2)=:A_i$, for $1\le i\le k+2$, it follows 
    from H\"older's inequality, the bounded overlap of the $A_i$, and the choice of $k$ that
    \begin{align}\label{eq:2806-3}
        \sum_{i=1}^{k+2}\sum_{j=1}^{M-1}(|u_{B_{i,j}}-&u_{B_{i,j+1}}|)+\sum_{i=1}^{k+1}(|u_{B_{i,M}}-u_{B_{i+1,1}}|)\nonumber\\
        &\lesssim M\sum_{i=1}^{k+2}\left(\int_{A_i}\rho_u^Qd\mu\right)^{1/Q}+M(k+2)^{\frac{Q-1}{Q}}\left(\sum_{i=1}^{k+2}\int_{2B_{i,M}}\rho_u^Qd\mu\right)^{1/Q}\nonumber\\
        &\lesssim M(k+2)^{\frac{Q-1}{Q}}\|\rho_u\|_{L^Q_\mu(X\setminus B(O,|x_0|/c_2))}\lesssim M\log^{\frac{Q-1}{Q}}\left(\frac{|x_1|}{|x_0|}\right)\|\rho_u\|_{L^Q_\mu(X\setminus B(O,|x_0|/c_2))}.
    \end{align}

    We now use a similar argument to estimate the first term on the right hand side of \eqref{eq:2406-1}.  Let $k_0\in\N$ be such that 
    \[
    2^{k_0}\le \frac{2|x_0|}{\sigma c_1}<2^{k_0+1}.
    \]
    For each $0\le i\le k_0$, let $b_i\in X$ be such that $d(x_0,b_i)=2^i$.  Again, 
 the connectedness of $X$ makes this possible. Similarly as above, for each $1\le i\le k_0$, we can join the balls $B(b_{i-1},2^i/(\sigma c_1))$ and $B(b_i,2^i/(\sigma c_1))$ with a finite chain of balls $\{B_{i,j}\}_{j=1}^M$ as in Definition~\ref{dyadic}.  That is, each $B_{i,j}$ has radius $2^i/(\sigma c_1)$ and $B_{i,j}\subset B(x_0,c_2 2^i)\setminus B(x_0,2^i/c_2):=A_i(x_0)$.  
 By the $Q$-Poincar\'e inequality, Ahlfors $Q$-regularity, and H\"older's inequality, it then follows that 
 \begin{align}\label{eq:2806-1}
     &|u_{B(x_0,1)}-u_{B(a_0,2|x_0|/(\sigma c_1))}|\nonumber\\
     \le&|u_{B(x_0,1)}-u_{B(b_0,1/(\sigma c_1))}|\nonumber+\sum_{i=1}^{k_0}|u_{B(b_{i-1},2^i/(\sigma c_1))}-u_{B(b_i,2^i/(\sigma c_1))}|\\
     &+\sum_{i=1}^{k_0}|u_{B(b_{i-1}, 2^{i-1}/(\sigma c_1))}-u_{B(b_{i-1}, 2^i/(\sigma c_1))}|+|u_{B(b_{k_0},2^{k_0}/(\sigma c_1))}-u_{B(x_0,2|x_0|/(\sigma c_1))}|\nonumber\\
     \lesssim&\left(\int_{B(x_0,2)}\rho_u^Qd\mu\right)^{1/Q}+2\sum_{i=1}^{k_0}\sum_{j=1}^M\left(\int_{B_{i,j}}\rho_u^Qd\mu\right)^{1/Q}+\left(\int_{B(x_0,4|x_0|/(\sigma c_1))}\rho_u^Qd\mu\right)^{1/Q}\nonumber\\
     \lesssim&\|\rho_u\|_{L^Q_\mu(X\setminus B(O,|x_0|/(2 c_2)))}+Mk_0^{\frac{Q-1}{Q}}\left(\sum_{i=1}^{k_0}\int_{A_i(x_0)}\rho_u^Qd\mu\right)^{1/Q}\nonumber\\
    \lesssim&\|\rho_u\|_{L^Q_\mu(X\setminus B(O,|x_0|/(2 c_2)))}+M\log^{\frac{Q-1}{Q}}(|x_0|)\|\rho_u\|_{L^Q_\mu(B(x_0,2 c_2|x_0|/(\sigma c_1)))}\nonumber\\
    \lesssim& M\log^{\frac{Q-1}{Q}}\left(\frac{|x_1|}{|x_0|}\right)\|\rho_u\|_{L^Q_\mu(X\setminus B(O,|x_0|/(2 c_2)))}.
 \end{align}
 Here we have used the choice of $\sigma$ to ensure that $B(x_0,4|x_0|/(\sigma c_1))$ and $B(x_0, 2 c_2|x_0|/(\sigma c_1))$ are subsets of $X\setminus B(O, |x_0|/(2 c_2))$.  We have also used the assumption that $|x_1|/|x_0|\ge |x_0|$ and $|x_0|> 8 c_2$ to ensure that $k_0\lesssim\log|x_0|\lesssim\log(|x_1|/|x_0|)$.

To estimate the last term on the right hand side of \eqref{eq:2406-1}, we note that the ball $B(a_{k+2},\frac{|x_1|}{|x_0|})$ and the ball $B(x_1,\frac{|x_1|}{|x_0|})$ are both contained in $B(O,2\overline{c_1}|x_1|)\setminus B(O,|x_0|/\overline{c_1})$.  Proposition~\ref{prop2.3-0806} then gives us a chain of balls $\{B_i\}_{i=1}^{\overline{M}}$ joining $B(a_{k+2},\frac{|x_1|}{|x_0|})$ and $B(x_1,\frac{|x_1|}{|x_0|})$ so that each $B_i$ has radius comparable to $|x_1|/|x_0|$ and is contained in $B(O,2\overline{c_2}|x_1|)\setminus B(O,|x_0|/\overline{c_2})$.  Here, $\overline{c_1}$, $\overline{c_2}$, and $\overline M$ are the constants from Proposition~\ref{prop2.3-0806}.  Thus, by the $Q$-Poincar\'e inequality, Ahlfors $Q$-regularity, H\"older's inequality, and the choice of $k$, we have that 
\begin{align*}
    |u_{B(a_{k+2},2^{k+2}|x_0|/(\sigma c_1))}-&u_{B(x_1,1)}|\le|u_{B(a_{k+2},2^{k+2}|x_0|/(\sigma c_1))}-u_{B(a_{k+2},\frac{|x_1|}{|x_0|})}|\\
    &+\sum_{i=1}^{\overline{M}-1}|u_{B_i}-u_{B_{i+1}}|+|u_{B(x_1,\frac{|x_1|}{|x_0|})}-u_{B(x_1,1)}|\\
&\lesssim\left(\int_{B(a_{k+2},\frac{|x_1|}{|x_0|})}\rho_u^Qd\mu\right)^{1/Q}+\sum_{i=1}^{\overline M-1}\left(\int_{B_i}\rho_u^Qd\mu\right)^{1/Q}+|u_{B(x_1,\frac{|x_1|}{|x_0|})}-u_{B(x_1,1)}|\\
&\lesssim \overline M\|\rho_u\|_{L^Q_\mu(X\setminus B(O,|x_0|/\overline{c_2}))}+|u_{B(x_1,\frac{|x_1|}{|x_0|})}-u_{B(x_1,1)}|.
\end{align*}
Here we have used the choice of $\sigma>4$ to ensure that $B(a_{k+2},2^{k+2}|x_0|/(\sigma c_1))\subset B(a_{k+2},|x_1|/|x_0|)$.
By the same chaining argument used to obtain estimate \eqref{eq:2806-1}, we have that 
\[
|u_{B(x_1,\frac{|x_1|}{|x_0|})}-u_{B(x_1,1)}|\lesssim M\log^{\frac{Q-1}{Q}}\left(\frac{|x_1|}{|x_0|}\right)\|\rho_u\|_{L^Q_\mu(X\setminus B(O,|x_0|/(2c_2)))},
\]
and so combining this with the previous estimate, we obtain
\begin{equation}\label{eq:2806-2}
    |u_{B(a_{k+2},2^{k+2}|x_0|/(\sigma c_1))}-u_{B(x_1,1)}|\lesssim(M+\overline M)\log^{\frac{Q-1}{Q}}\left(\frac{|x_1|}{|x_0|}\right)\|\rho_u\|_{L^Q_\mu(X\setminus B(O,|x_0|/(2\overline{c_2}c_2)))}.
\end{equation}
Therefore, combining \eqref{eq:2806-3}, \eqref{eq:2806-1}, and \eqref{eq:2806-2} with \eqref{eq:2406-1} completes the proof.
\end{proof}

 \begin{proof}[Proof of Theorem \ref{thm1-2705}] 
 The first  claim \eqref{eq1.1-2705} follows from the second claim \eqref{equ1.2-2805} and Lemma~\ref{thm3.3-2605}, and so it suffices to prove \eqref{equ1.2-2805}.

 Let $u\in\dot N^{1,Q}(X)$.  Since $(X,d,\mu)$ is complete, Ahlfors $Q$-regular, and supports a $Q$-Poincar\'e inequality, it follows from \cite[Corollary~8.3.16]{pekka} that $X$ admits a geodesic metric $d'$ which is bi-Lipschitz equivalent to the original metric $d$.  Then $(X,d',\mu)$ is also Ahlfors $Q$-regular and supports a $Q$-Poincar\'e inequality.  For each $x\in X$ and $r>0$, we  denote $B'(x,r)$  the ball centered at $x$ of radius $r$ with respect to $d'$. 

 By the triangle inequality, we have that 
 \[
 \left|\frac{u_{B(x,1)}}{\log^{\frac{Q-1}{Q}}(|x|)}\right|\le\left|\frac{u_{B(x,1)}-u_{B'(x,1)}}{\log^{\frac{Q-1}{Q}}(|x|)}\right|+\left|\frac{u_{B'(x,1)}}{\log^{\frac{Q-1}{Q}}(|x|)}\right|.
 \]
 By the $Q$-Poincar\'e inequality, Ahlfors $Q$-regularity, and the fact that $d$ is bi-Lipschitz equivalent to $d'$, it follows that 
 \begin{align*}
     |u_{B(x,1)}-u_{B'(x,1)}|\lesssim\left(\int_{B(x,C)}\rho_u^Qd\mu\right)^{1/Q}\to 0
 \end{align*}
 as $x\to+\infty$, since $\rho_u\in L^Q_\mu(X)$.  Here the constant $C$ depends on the bi-Lipschitz constant. We let $|x|':=d'(O,x)$.  As such it suffices to show that 
 \[
 \lim_{x\to+\infty}\left|\frac{u_{B'(x,1)}}{\log^{\frac{Q-1}{Q}}(|x|')}\right|=0.
 \]
 	
  We argue by  contradiction. Assume that the above does not hold.  Then there is a constant $\delta>0$ such that
	\begin{equation}\label{eq3.5-2805}
		\limsup_{x\to{+\infty}}\left| \frac{u_{B'(x,1)}}{\log^{\frac{Q-1}{Q}}(|x|')} \right|>2\delta.
	\end{equation}
	Since $d$ is bi-Lipschitz equivalent to $d'$, it follows that  $u\in\dot N^{1,Q}(X,d',\mu)$, and so we can choose $x_0\in X$, with $|x_0|'$ sufficiently large, such that $|u_{B'(x_0,1)}|<\infty$, $|x_0|'> 8c_2$, and 
	\begin{equation}\label{eq-delta-2605}
		\delta > 2 C_{1} \|\rho_u\|_{L^Q_\mu\left(X\setminus B'\left(O,\frac{|x_0|'}{2\overline{c_2}c_2}\right)\right)},
	\end{equation}
	where $C_1$, $\overline{c_2}$, and $c_2$ are the constants in Lemma \ref{lem3.4-2805}.
	Likewise, from \eqref{eq3.5-2805}, we can choose $x_1\in X$, with $|x_1|'$ sufficiently large so that $|x_1|'/|x_0|'\ge|x_0|'$ and 
	\begin{equation}\label{eq3-2605}
		|u_{B'(x_1,1)}|>2\delta \log^{\frac{Q-1}{Q}}(|x_1|')>2 |u_{B'(x_0,1)}|.
	\end{equation}
From this estimate, it then follows that
	\begin{align*}
		\delta \log^{\frac{Q-1}{Q}}\left(\frac{|x_1|'}{|x_0|'} \right) \leq  \delta \log^{\frac{Q-1}{Q}}(|x_1|')\leq \frac{1}{2}|u_{B'(x_1,1)}|&=|u_{B'(x_1,1)}|-|u_{B'(x_0,1)}|+ \frac{2|u_{B'(x_0,1)}|-|u_{B'(x_1,1)}|}{2}\\
		&\leq  |u_{B'(x_1,1)}- u_{B'(x_0,1)}|
	\end{align*}
where the first inequality is given because $|x_0|'\ge 8$. Combining this estimate with Lemma \ref{lem3.4-2805}, we obtain 
	\[
		\delta \log^{\frac{Q-1}{Q}}\left(\frac{|x_1|'}{|x_0|'} \right) \leq  C_1 \log^{\frac{Q-1}{Q}}\left(\frac{|x_1|'}{|x_0|'} \right)  \|\rho_u\|_{L^Q_\mu\left(X\setminus B'\left(O,\frac{|x_0|'}{2\overline{c_2}c_2} \right)\right)},
	\]
	which implies that
	\[
	\delta \leq  C_1  \|\rho_u\|_{L^Q_\mu\left(X\setminus B'\left(O,\frac{|x_0|'}{2\overline{c_2}c_2} \right)\right)}.
	\]
This contradicts \eqref{eq-delta-2605}, completing the proof. 
 \end{proof}

Recall that when $p>Q$, 
the assumption that   $(X,d,\mu)$ is a metric measure space with metric $d$ and Ahlfors $Q$-regular measure $\mu$ supporting a $p$-Poincar\'e inequality,
 does not guarantee that $X$ satisfies the annular chain property.  Therefore to prove Theorem~\ref{thm:p>Q Growth}, we must make the further assumption that this property holds.  We now use Theorem~\ref{thm:Morrey} to prove the following analog of Lemma~\ref{lem3.4-2805} for the case $p>Q$:
\begin{lemma}\label{lem:p>Q LogLemma}
Let $1<Q<+\infty$, and let $Q<p<+\infty$.  Suppose that $(X,d,\mu)$ is a complete, unbounded geodesic metric measure space with metric $d$ and Ahlfors $Q$-regular measure $\mu$ supporting a $p$-Poincar\'e inequality.  Furthermore, we assume that $X$ satisfies the annular chain property.  Then there exists a constant $C_1\ge 1$ such that for all $u\in\dot N^{1,p}(X)$ and for all $x_0,x_1\in X$ satisfying $|x_0|>8c_2$ and $|x_1|/|x_0|\ge |x_0|$, the following holds:
\begin{equation}
    |u_{B(x_0,1)}-u_{B(x_1,1)}|\le C_1\left(\frac{|x_1|}{|x_0|}\right)^{1-Q/p}\|\rho_u\|_{L^p_\mu\left(X\setminus B\left(O,\frac{|x_0|}{2\overline c_2c_2}\right)\right)}.
\end{equation}
Here $c_2$ and $\overline c_2$ are the constants given by Definition~\ref{dyadic} and Proposition~\ref{prop2.3-0806}, respectively.    
\end{lemma}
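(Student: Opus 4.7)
The plan is to adapt the proof of Lemma~\ref{lem3.4-2805} by replacing the $Q$-Poincar\'e chaining at unit scale with direct applications of Morrey's inequality (Theorem~\ref{thm:Morrey}). The key algebraic observation is that, after absorbing $\rad(B)^{Q/p}$ against the Ahlfors $Q$-regular normalisation, Morrey gives $|u(x)-u(y)|\lesssim d(x,y)^{1-Q/p}\|\rho_u\|_{L^p(4\lambda B)}$ for all $x,y\in B$; and since $1-Q/p>0$, the geometric sum $\sum 2^{i(1-Q/p)}$ is dominated by its largest term, which is precisely what turns the $\log^{(Q-1)/Q}$ factor of Lemma~\ref{lem3.4-2805} into a power.

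First I would handle the unit-scale endpoints by applying Morrey on $B(x_0,1)$ and $B(x_1,1)$ to get $|u(x_i)-u_{B(x_i,1)}|\lesssim\|\rho_u\|_{L^p(X\setminus B(O,|x_0|/(2\overline{c_2}c_2)))}$ for $i=0,1$; the hypothesis $|x_0|>8c_2$ (together with $|x_1|/|x_0|\ge|x_0|$) guarantees that $4\lambda B(x_i,1)$ sits inside the outer region. It then suffices to estimate $|u(x_0)-u(x_1)|$, for which I would build the same two-scale chain used in Lemma~\ref{lem3.4-2805}: choose $a_i$ with $|a_i|=2^i|x_0|$ for $0\le i\le k+2$, where $k$ satisfies $2^k|x_0|\le|x_1|/|x_0|<2^{k+1}|x_0|$; use the annular chain property (Definition~\ref{dyadic}) to connect $a_i$ to $a_{i+1}$ via at most $M$ balls $B_{i,j}$ of radius $\sim 2^i|x_0|$ sitting in $A_i:=B(O,c_2 2^i|x_0|)\setminus B(O,2^i|x_0|/c_2)$; and, via Proposition~\ref{prop2.3-0806}, join $a_{k+2}$ to $x_1$ through a chain of $\overline{M}$ balls of radius $\sim|x_1|/|x_0|$ in $B(O,2\overline{c_2}|x_1|)\setminus B(O,|x_0|/\overline{c_2})$. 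Applying Morrey on each ball in the chain and telescoping then bounds
\[
|u(x_0)-u(x_1)|\lesssim\sum_{i=0}^{k+1}M(2^i|x_0|)^{1-Q/p}\|\rho_u\|_{L^p(4\lambda A_i)}+\overline{M}\left(\frac{|x_1|}{|x_0|}\right)^{1-Q/p}\|\rho_u\|_{L^p(X\setminus B(O,|x_0|/\overline{c_2}))}.
\]

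The main obstacle will be extracting exactly $(|x_1|/|x_0|)^{1-Q/p}$ from the first sum rather than the weaker $|x_1|^{1-Q/p}$ that one would get by simply replacing each annular norm by the global norm before summing the geometric series. For this I would apply H\"older's inequality with conjugate exponents $p/(p-1)$ and $p$:
\[
\sum_{i=0}^{k+1}(2^i|x_0|)^{1-Q/p}\|\rho_u\|_{L^p(4\lambda A_i)}\le\left(\sum_{i=0}^{k+1}(2^i|x_0|)^{(p-Q)/(p-1)}\right)^{(p-1)/p}\left(\sum_{i=0}^{k+1}\|\rho_u\|_{L^p(4\lambda A_i)}^p\right)^{1/p}.
\]
Since $(p-Q)/(p-1)>0$, the geometric series in the first factor is dominated by its last term $(2^{k+1}|x_0|)^{(p-Q)/(p-1)}\lesssim(|x_1|/|x_0|)^{(p-Q)/(p-1)}$, which upon raising to the power $(p-1)/p$ yields $(|x_1|/|x_0|)^{1-Q/p}$; the bounded overlap of the enlarged annuli $4\lambda A_i$ controls the second factor by $\|\rho_u\|_{L^p(X\setminus B(O,|x_0|/(2\overline{c_2}c_2)))}$. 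Combining with the second chain and the boundary pieces from the first step gives the claimed inequality. The remaining bookkeeping will be to verify that every $4\lambda B$ appearing in the various Morrey applications lies inside $X\setminus B(O,|x_0|/(2\overline{c_2}c_2))$, which follows from the constant choices already in place in Lemma~\ref{lem3.4-2805}.
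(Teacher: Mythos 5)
Your proposal is correct and follows essentially the same route as the paper's proof: the same two-scale chain via the annular chain property and Proposition~\ref{prop2.3-0806}, Morrey's inequality for the scale-$1$-to-scale-$|x_0|$ (and $|x_1|/|x_0|$) comparisons, and the same H\"older-plus-geometric-series step with exponent $(p-Q)/(p-1)>0$ to extract $(|x_1|/|x_0|)^{1-Q/p}$ from the middle sum. The only cosmetic difference is that you reduce to pointwise values $|u(x_0)-u(x_1)|$ at the outset and apply Morrey on every chain ball, whereas the paper carries ball averages throughout and uses the $p$-Poincar\'e inequality for the intermediate chain comparisons; these are equivalent here.
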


\begin{proof}
    As in the proof of Lemma~\ref{lem3.4-2805}, we let $k\in\N$ be such that 
    \[
    2^k|x_0|\le\frac{|x_1|}{|x_0|}<2^{k+1}|x_0|,
    \]
and for each $0\le i\le k+2$, we choose $a_i\in X$ such that $d(O,a_i)=2^i|x_0|$.  Letting 
\[
\sigma:=100\left( \frac{c_2}{c_1}+1\right)
\]
where $c_1$ and $c_2$ are the constants from Definition~\ref{dyadic}, we can join the balls $B(a_{i-1},2^i|x_0|/(\sigma c_1))$ and $B(a_i,2^i|x_0|/(\sigma c_1))$, for $1\le i\le k+2$ by a finite chain of balls $\{B_{i,j}\}_{j=1}^M$ where $M$ depends on $c_1$ and $c_2$.  In particular, each $B_{i,j}$ has radius $2^i|x_0|/(\sigma c_1)$ and is contained in $B(O,c_22^i|x_0|)\setminus B(O, 2^i|x_0|/c_2)$.  As in the proof of Lemma~\ref{lem3.4-2805}, we have that 
 \begin{align}\label{eq:3008-1}
        |u_{B(x_0,1)}-u_{B(x_1,1)}|\le|u_{B(x_0,1)}-&u_{B(a_0,2|x_0|/(\sigma c_1))}|+\sum_{i=1}^{k+2}\sum_{j=1}^{M-1}\left(|u_{B_{i,j}}-u_{B_{i,j+1}}|\right)+\sum_{i=1}^{k+1}\left(|u_{B_{i,M}}-u_{B_{i+1,1}}|\right)\nonumber\\
        &+|u_{B(a_{k+2},2^{k+2}|x_0|/(\sigma c_1))}-u_{B(x_1,1)}|.
    \end{align}
    
    By Theorem~\ref{thm:Morrey} and Ahlfors $Q$-regularity, we have that
    \begin{align}\label{eq:Morrey piece 1}
    |u_{B(x_0,1)}-&u_{B(a_0,2|x_0|/(\sigma c_1))}|\nonumber\\
    &\lesssim|x_0|^{1-Q/p}\left(\int_{B(x_0,8|x_0|/(\sigma c_1)}\rho_u^pd\mu\right)^{1/p}\le\left(\frac{|x_1|}{|x_0|}\right)^{1-Q/p}\|\rho_u\|_{L^p_\mu\left(X\setminus B\left(O,\frac{|x_0|}{2c_2}\right)\right)}.
    \end{align}
    Here we have used the choice of $\sigma$ to ensure that $B(x_0,8|x_0|/(\sigma c_1))\subset X\setminus B(O,|x_0|/(2c_2))$.  We note that the scaling factor of the Poincar\'e inequality is $1$, since $X$ is assumed to be geodesic.  
    
    To estimate the second term on the right hand side of \eqref{eq:3008-1}, we obtain from the Ahflors $Q$-regular property, the $p$-Poincar\'e inequality, and the H\"older's inequality that
    \begin{align*}
        \sum_{i=1}^{k+2}\sum_{j=1}^{M-1}&(|u_{B_{i,j}}-u_{B_{i,j+1}}|)+\sum_{i=1}^{k+1}(|u_{B_{i,M}}-u_{B_{i+1,1}}|)\\
        &\lesssim\sum_{i=1}^{k+2}\sum_{j=1}^{M}\left((2^i|x_0|)^{1-Q/p}\left(\int_{B_{i,j}}\rho_u^pd\mu\right)^{1/p}+(2^i|x_0|)^{1-Q/p}\left(\int_{2 B_{i,M}}\rho_u^pd\mu\right)^{1/p}\right)\\
        &\le\sum_{i=1}^{k+2} (2^i|x_0|)^{1-Q/p}\sum_{j=1}^{M}\left(\int_{ B_{i,j}}\rho_u^pd\mu\right)^{1/p}+M\sum_{i=1}^{k+2}(2^i|x_0|)^{1-Q/p}\left(\int_{2 B_{i,M}}\rho_u^pd\mu\right)^{1/p}\\
        &\lesssim M^{(p-1)/p}\sum_{i=1}^{k+2}(2^i|x_0|)^{1-Q/p}\left(\sum_{j=1}^{M}\int_{B_{i,j}}\rho_u^pd\mu\right)^{1/p}+M(2^k|x_0|)^{1-Q/p}\left(\sum_{i=1}^{k+2}\int_{2B_{i,M}}\rho_u^pd\mu\right)^{1/p}.
    \end{align*}
Since $B_{i,j}\subset B(O,c_22^i|x_0|)\setminus B(O,2^i|x_0|/c_2)=:A_i$, we have by the bounded overlap of the $A_i$, H\"older's inequality, and choice of $k$ that 
\begin{align}\label{eq:3008-2}
 \sum_{i=1}^{k+2}\sum_{j=1}^{M-1}(|u_{B_{i,j}}&-u_{B_{i,j+1}}|)+\sum_{i=1}^{k+1}(|u_{B_{i,M}}-u_{B_{i+1,1}}|)\nonumber\\   
  &\lesssim M\sum_{i=1}^{k+2}(2^i|x_0|)^{1-Q/p}\left(\int_{A_i}\rho_u^pd\mu\right)^{1/p}+M(2^k|x_0|)^{1-Q/p}\left(\sum_{i=1}^{k+2}\int_{2B_{i,M}}\rho_u^pd\mu\right)^{1/p}\nonumber\\
  &\lesssim M (2^k|x_0|)^{1-Q/p}\|\rho_u\|_{L^p_{\mu}\left(X\setminus B\left(O,\frac{|x_0|}{2c_2}\right)\right)}\lesssim M\left(\frac{|x_1|}{|x_0|}\right)^{1-Q/p}\|\rho_u\|_{L^p_{\mu}\left(X\setminus B\left(O,\frac{|x_0|}{2c_2}\right)\right)}.
\end{align}

To estimate the last term on the right hand side of \eqref{eq:3008-1}, we use Proposition~\ref{prop2.3-0806} to join $B(a_{k+2},|x_1|/|x_0|)$ and $B(x_1,|x_1|/|x_0|)$ with the same chain of balls $\{B_i\}_{i=1}^{\overline M}$ used in the proof of Lemma~\ref{lem3.4-2805}.  Similarly as in that proof, we use Ahlfors $Q$-regularity, the $p$-Poincar\'e inequality, and  H\"older's inequality to obtain
\begin{align*}
 |u&_{B(a_{k+2},2^{k+2}|x_0|/(\sigma c_1))}-u_{B(x_1,1)}|\\
 &\le|u_{B(a_{k+2},2^{k+2}|x_0|/(\sigma c_1))}-u_{B(a_{k+2},\frac{|x_1|}{|x_0|})}|+\sum_{i=1}^{\overline{M}-1}|u_{B_i}-u_{B_{i+1}}|+|u_{B(x_1,\frac{|x_1|}{|x_0|})}-u_{B(x_1,1)}|\\
&\lesssim\left(\frac{|x_1|}{|x_0|}\right)^{1-Q/p}\left(\int_{B(a_{k+2},\frac{|x_1|}{|x_0|})}\rho_u^pd\mu\right)^{1/p}+\left(\frac{|x_1|}{|x_0|}\right)^{1-Q/p}\sum_{i=1}^{\overline M}\left(\int_{B_i}\rho_u^pd\mu\right)^{1/p}+|u_{B(x_1,\frac{|x_1|}{|x_0|})}-u_{B(x_1,1)}|\\
&\lesssim \overline M\left(\frac{|x_1|}{|x_0|}\right)^{1-Q/p}\|\rho_u\|_{L^p_\mu(X\setminus B(O,|x_0|/\overline{c_2}))}+|u_{B(x_1,\frac{|x_1|}{|x_0|})}-u_{B(x_1,1)}|.   
\end{align*}
By Theorem~\ref{thm:Morrey}, and the fact that $B(x_1,4|x_1|/|x_0|)\subset X\setminus B(O,|x_0|/(2c_2))$, it follows that 
\begin{align*}
    |u_{B(x_1,\frac{|x_1|}{|x_0|})}-u_{B(x_1,1)}|\lesssim\left(\frac{|x_1|}{|x_0|}\right)^{1-Q/p}\left(\int_{B(x_1,\frac{4|x_1|}{|x_0|})}\rho_u^pd\mu\right)^{1/p}\le\left(\frac{|x_1|}{|x_0|}\right)^{1-Q/p}\|\rho_u\|_{L^p_\mu\left(X\setminus B\left(O,\frac{|x_0|}{2c_2}\right)\right)}.
\end{align*}
Combining this with the previous estimate, we have that 
\[
|u_{B(a_{k+2},2^{k+2}|x_0|/(\sigma c_1))}-u_{B(x_1,1)}|\lesssim\overline M\left(\frac{|x_1|}{|x_0|}\right)^{1-Q/p}\|\rho_u\|_{L^p_\mu\left(X\setminus B\left(O,\frac{|x_0|}{2\overline c_2c_2}\right)\right)}.
\]
Therefore, combining this estimate with \eqref{eq:3008-1}, \eqref{eq:Morrey piece 1}, and \eqref{eq:3008-2} completes the proof.    
\end{proof}

\begin{proof}[Proof of Theorem~\ref{thm:p>Q Growth}]

Let $u\in\dot N^{1,p}(X)$.  By Theorem~\ref{thm:Morrey}, it follows that 
\[
|u(x)-u_{B(x,1)}|\lesssim\left(\int_{B(x,4\lambda)}\rho_u^pd\mu\right)^{1/p}\to 0
\]
as $x\to+\infty$, since $\rho_u$ is $p$-integrable.  Therefore, to prove \eqref{eq:p>Q Growth 1}, it suffices to prove \eqref{eq:p>Q Growth 2} since 
\[
\frac{|u(x)|}{|x|^{1-Q/p}}\le\frac{|u(x)-u_{B(x,1)}|}{|x|^{1-Q/p}}+\frac{|u_{B(x,1)}|}{|x|^{1-Q/p}}.
\]
The proof of \eqref{eq:p>Q Growth 2} follows by the same argument establishing \eqref{equ1.2-2805} in the proof of Theorem~\ref{thm1-2705}, except in this case we use Lemma~\ref{lem:p>Q LogLemma} instead of Lemma~\ref{lem3.4-2805}.
\end{proof}

Using Theorem~\ref{thm2.1-0506}, we obtain modifications of \eqref{eq1.1-2705}, \eqref{equ1.2-2805}, \eqref{eq:p>Q Growth 1}, and \eqref{eq:p>Q Growth 2}, by replacing $\log|x|$ and $|x|$ with quantities of modulus and capacities:

\begin{corollary}Let $1<Q\le p<+\infty$.
Under the assumptions of Theorem~\ref{thm1-2705} when $p=Q$, and under the assumptions of Theorem~\ref{thm:p>Q Growth} when $p>Q$, we have that for every $u\in\dot N^{1,p}(X)$, 
\[
\lim_{x\to+\infty} |u_{B(x,1)}|^p \text{\rm Mod}_p(B(O,1),X\setminus B(O,|x|)) =\lim_{x\to+\infty} |u_{B(x,1)}|^p \text{\rm Cap}_p(B(O,1),X\setminus B(O,|x|)) =0, 	 
 \]
 	and there exists a $Q$-thin set $E$ such that for $p=Q$,
\[
\lim_{E\not\ni x\to+\infty} |u(x)|^Q \text{\rm Mod}_Q(B(O,1),X\setminus B(O,|x|)) =\lim_{E\not\ni x\to+\infty} |u(x)|^Q \text{\rm Cap}_Q(B(O,1),X\setminus B(O,|x|)) =0,
\]
and  that for $p>Q$,
\[
\lim_{ x\to+\infty} |u(x)|^p \text{\rm Mod}_p(B(O,1),X\setminus B(O,|x|)) =\lim_{x\to+\infty} |u(x)|^p \text{\rm Cap}_p(B(O,1),X\setminus B(O,|x|)) =0.
\]
\end{corollary}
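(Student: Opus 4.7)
The plan is to reduce the corollary to an algebraic reformulation of the growth estimates in Theorems~\ref{thm1-2705} and \ref{thm:p>Q Growth}, using the explicit asymptotics for $\text{\rm Mod}_p$ and $\text{\rm Cap}_p$ of annular regions given in Theorem~\ref{thm2.1-0506}. The key observation is that these asymptotics match exactly the normalizing factors appearing in the two growth theorems, so the corollary will follow by a direct algebraic rearrangement.

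First, I would invoke Theorem~\ref{thm2.1-0506} with $r=1$ and $R=|x|$ for all $|x|$ sufficiently large (say $|x|>2$). This yields
\[
\text{\rm Mod}_p(B(O,1), X\setminus B(O,|x|)) \simeq \text{\rm Cap}_p(B(O,1), X\setminus B(O,|x|)) \simeq \begin{cases} (\log|x|)^{1-Q} & \text{if } p=Q, \\ |x|^{Q-p} & \text{if } p>Q. \end{cases}
\]
Because modulus and capacity are in fact equal by \eqref{eq2.4-2308}, it suffices to establish each of the claimed limits for the capacity quantity; the modulus version is then automatic.

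In the case $p=Q$, multiplying through by $|u_{B(x,1)}|^Q$ and $|u(x)|^Q$ respectively gives the algebraic identities
\[
|u_{B(x,1)}|^Q (\log|x|)^{1-Q} = \left(\frac{|u_{B(x,1)}|}{\log^{(Q-1)/Q}(|x|)}\right)^Q, \qquad |u(x)|^Q (\log|x|)^{1-Q} = \left(\frac{|u(x)|}{\log^{(Q-1)/Q}(|x|)}\right)^Q.
\]
The first limit then follows at once from \eqref{equ1.2-2805} of Theorem~\ref{thm1-2705}, while the second follows from \eqref{eq1.1-2705}, with $E$ being the $Q$-thin set furnished there.

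For $p>Q$, the analogous rewritings
\[
|u_{B(x,1)}|^p |x|^{Q-p} = \left(\frac{|u_{B(x,1)}|}{|x|^{1-Q/p}}\right)^p, \qquad |u(x)|^p |x|^{Q-p} = \left(\frac{|u(x)|}{|x|^{1-Q/p}}\right)^p
\]
reduce the statement to \eqref{eq:p>Q Growth 2} and \eqref{eq:p>Q Growth 1} of Theorem~\ref{thm:p>Q Growth}. Since \eqref{eq:p>Q Growth 1} holds as an honest pointwise limit as $x\to+\infty$ (no exceptional set appears), we may take $E=\varnothing$ in this case. There is essentially no obstacle — the argument is a direct transcription between two equivalent formulations of the same growth rates, so the only work is to match the exponents in Theorem~\ref{thm2.1-0506} with the normalizing denominators in the two growth theorems, as done above.
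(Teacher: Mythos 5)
Your proposal is correct and is exactly the argument the paper intends: the corollary is stated as an immediate consequence of Theorems~\ref{thm1-2705} and \ref{thm:p>Q Growth} combined with the annular capacity asymptotics of Theorem~\ref{thm2.1-0506}, and your exponent bookkeeping $(\log|x|)^{1-Q}=\bigl(\log^{(Q-1)/Q}|x|\bigr)^{-Q}$ and $|x|^{Q-p}=\bigl(|x|^{1-Q/p}\bigr)^{-p}$ is accurate. No gaps.
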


\section{Proof of Theorem \ref{thm1.1-3105}}
We now turn our attention to the behaviour at infinity of inhomogeneous Sobolev functions.  We first consider the case when $p>Q$.

\begin{proposition}\label{thm2.2-07-03}
Let $1<Q<+\infty$ and $1<p<+\infty$ so that $Q<p$. Suppose that $(X,d,\mu)$ is a complete, unbounded metric measure space with Ahlfors $Q$-regular measure $\mu$ supporting a $p$-Poincar\'e inequality. Then for every function $u\in  N^{1,p}(X)$, 
\[
	\lim_{x\to +\infty}u(x)=0.
\]
\end{proposition}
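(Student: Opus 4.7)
The plan is to decompose $|u(x)|$ via the triangle inequality as
\[
|u(x)| \;\le\; |u(x) - u_{B(x,1)}| \;+\; |u_{B(x,1)}|
\]
and show that both terms tend to zero as $x\to+\infty$. Since $p>Q$, the Morrey embedding (Theorem~\ref{thm:Morrey}) guarantees that every $u\in\dot N^{1,p}(X)$ has a continuous representative, so this pointwise formulation is meaningful and every point is a Lebesgue point by Proposition~\ref{thm2.4-2809}.

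For the first term, I would apply Theorem~\ref{thm:Morrey} to the unit ball $B(x,1)$. Since $u$ is continuous on $B(x,1)$, integrating the Morrey H\"older-type bound yields
\[
|u(x) - u_{B(x,1)}| \;\le\; \dashint_{B(x,1)} |u(x)-u(y)|\,d\mu(y) \;\lesssim\; \left(\int_{B(x,4\lambda)} \rho_u^p\,d\mu\right)^{1/p},
\]
where $\lambda$ is the scaling factor of the $p$-Poincar\'e inequality and the implicit constant depends on $Q$, $p$, $\lambda$, and the Ahlfors regularity constant. Since $\rho_u\in L^p_\mu(X)$, the integral of $\rho_u^p$ over $B(x,4\lambda)\subset X\setminus B(O,|x|-4\lambda)$ tends to zero as $|x|\to+\infty$ by absolute continuity of the integral, so this first term vanishes at infinity.

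For the second term, I would exploit that $u\in L^p_\mu(X)$. By H\"older's inequality and Ahlfors $Q$-regularity,
\[
|u_{B(x,1)}|^p \;\le\; \dashint_{B(x,1)} |u|^p\,d\mu \;\lesssim\; \int_{B(x,1)} |u|^p\,d\mu.
\]
For every $\varepsilon>0$, the $p$-integrability of $u$ gives an $R>0$ such that $\int_{X\setminus B(O,R)}|u|^p\,d\mu<\varepsilon$. Once $|x|>R+1$, we have $B(x,1)\subset X\setminus B(O,R)$, so the right-hand side is less than $\varepsilon$. Hence $u_{B(x,1)}\to 0$ as $x\to+\infty$.

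Combining the two estimates yields $\lim_{x\to+\infty} u(x)=0$. There is no serious obstacle here; the argument is essentially a pointwise-to-average reduction using Morrey, followed by a standard $L^p$ tail decay. The only point requiring minor care is ensuring the Morrey inequality is applied at a fixed scale (radius $1$) so that the resulting constants do not blow up as $x\to+\infty$, which is automatic from Ahlfors $Q$-regularity since $\mu(B(x,1))\simeq 1$ uniformly in $x$.
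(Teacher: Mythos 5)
Your proof is correct and follows essentially the same route as the paper: the same triangle-inequality decomposition $|u(x)|\le|u(x)-u_{B(x,1)}|+|u_{B(x,1)}|$, with the first term controlled by the Morrey embedding (Theorem~\ref{thm:Morrey}) at the fixed scale $1$ and the second by the $L^p$-integrability of $u$ together with Ahlfors $Q$-regularity. No gaps; your write-up merely spells out details the paper leaves implicit.
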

\begin{proof}
Let $u\in N^{1,p}(X)$ and let $\rho_u$ be a $p$-integrable upper gradient of $u$.  Since $p>Q$, we have from Theorem~\ref{thm:Morrey} and Ahlfors $Q$-regularity that 
\[
|u(x)-u_{B(x,1)}|\lesssim\left(\int_{B(x,4\lambda)}\rho_u^pd\mu\right)^{1/p}\to 0
\]
as $x\to+\infty$, since $\rho_u$ is $p$-integrable and $X$ is unbounded.  Since $u\in L^p_\mu(X)$, we also have that 
\[
\lim_{x\to+\infty}u_{B(x,1)}=0.
\]
By these two estimates and the triangle inequality, the claim follows.\qedhere
\end{proof}

We now complete the proof of Theorem~\ref{thm1.1-3105}.

\begin{proof}[Proof of Theorem \ref{thm1.1-3105}] When $p>Q$, \eqref{eq:p>Q Inhom} follows from Proposition~\ref{thm2.2-07-03} above. When $p=Q$, by Lemma~\ref{thm1-0705}, there exists a $Q$-thin set $E$ such that
\[
\lim_{E\not\ni x\to+\infty}u(x)= \lim_{E\not\ni x\to+\infty}\dashint_{B(x,|x|/2)}u\,d\mu.
\]
Here the existence of limits follow since $\lim_{x\to+\infty}u_{B(x,|x|/2)}$ exists.
By Ahlfors $Q$-regularity, $\mu(B(x,|x|/2))\to +\infty$ as $x\to+\infty$, and so the right hand side equals $0$ since $u$ is $Q$-integrable.

When $p<Q$, it follows from \cite{KN22} that there exist $c\in\R$ and a family $\Gamma$ of infinite curves with positive $p$-modulus such that $\lim_{t\to+\infty}u(\gamma(t))=c$ for every $\gamma\in\Gamma$.  Suppose that $c\ne 0$.  For each $\gamma\in\Gamma$, there exists an infinite subcurve $\gamma'\subset\gamma$ such that $|u(x)|\ge |c|/2$ for all $x\in \gamma'$.  Let $\Gamma':=\{\gamma':\gamma\in\Gamma\}$.  We then have that 
\[
0<M:=\Mod_p(\Gamma)\le\Mod_p(\Gamma').
\]
For $k\ge 2$, let $A_k:=B(O,2^{k+1})\setminus B(O,2^k)$, and let $E_k:=\bigcup_{\gamma\in\Gamma'}(\gamma\cap A_k)$.  Let $\Gamma_k':=\{\gamma\in\Gamma':\gamma\cap A_{k-1}\ne\varnothing\}$ for $k\ge 1$.  Then, for every $n\ge 2$ we have that 
\[
\Gamma'\subset\bigcup_{k\ge n}\Gamma'_k.
\]
For each $k\ge n$, we have that  for all measurable sets $A\supset E_k$, 
$2^{-k}\chi_{A}$ is admissible for computing the $p$-modulus of $\Gamma'_k$, and so it follows that 
\[
2^{kp}\Mod_{p}(\Gamma'_k)\le\mu(E_k).
\]
Hence, we have that 
\[
\mu(|\Gamma'|)\ge\sum_{k\ge n}\mu(E_k)\ge\sum_{k\ge n}2^{kp}\Mod_p(\Gamma'_k)\ge 2^{np}\Mod_p(\Gamma')\ge2^{np}M,
\]
where $|\Gamma'|$ denotes of the union of the tracjectories of the curves in $\Gamma'$.  Since $n\ge 2$ is arbitrary, we see that $\mu(|\Gamma'|)=+\infty$, and since $|u(x)|\ge |c|/2$ for all $x\in \gamma'$ and for all $\gamma'\in\Gamma'$, we have that $u\not\in L^p_\mu(X)$, a contradiction.  Thus, we have that $c=0$. 
It then follows from the proof of Theorem~\ref{thm:p<Q Thin} that there exists a $p$-thin set $E$ such that $\lim_{E\not\ni x\to+\infty}u(x)=0$.  This gives us \eqref{eq1.4-0506}.

The uniqueness follows from the same argument as in the proof of Theorem~\ref{thm:p<Q Thin} and Theorem~\ref{thm1.3-2208}: if there exists $c'\in\R$ and a $p$-thin set $E'$ such that $\lim_{E'\not\ni x\to+\infty}u(x)=c'$, then by Lemma~\ref{prop2.5-0605} (4.) and (1.), it follows that 
$E\cup E'$ is also a $p$-thin set and $A_{\lambda 2^{j+1}}\setminus (E\cup E')$ is nonempty for sufficiently large $j$.  Thus $c'=0$, completing the proof.
\end{proof}

\section{Examples}
We give the following example by relying  on arguments in \cite{KN22,EKN22,KNW22,K22}.

\begin{example}
\label{example-homogeneous} Let $1<p<+\infty$ and $1<Q<+\infty$ be such that $Q\leq p$. Suppose that $(X,d,\mu)$ is an unbounded metric measure space with metric $d$ and Ahlfors $Q$-regular measure $\mu$. Then there is a function $u\in \dot N^{1,p}(X)$ such that 
	\begin{equation}\label{eq6.1-0306}
	\lim_{t\to+\infty}u(\gamma(t)) \text{\rm \ \ does not exist for any infinite curve $\gamma\in\Gamma^{+\infty}$}
	\end{equation}
and
	\begin{equation}\label{eq6.2-0306}
	\lim_{E\not \ni x\to+\infty}u(x) \text{\rm\ \ does not exist for any $Q$-thin set $E$}.
	\end{equation}
	Here the function $u$ can be chosen bounded or unbounded.
\end{example}
\begin{proof}
Let $O\in X$ be a fixed point.
We will prove this example for the constant $C$, appearing \eqref{eq2.4-2805} in the definition of thin sets, is $1$. The general case follows similarly. 
	Let 
	\[
	 g(x):=\sum_{k=1}^{+\infty} \left(\sum_{j=2^{k}}^{2^{k+1}} \frac{2^{-j}}{2^k}\chi_{A_{2^{j}}}(x) \right)
	\]
	where $A_{ 2^{j}}:=B(O, 2^{j+1})\setminus B(O, 2^{j})$ for $j\in \mathbb N$. Then by Ahlfors $Q$-regularity, we have that for $p\geq Q,$
	\begin{align*}
	\int_{X}g^pd\mu=\sum_{k=1}^{+\infty}\sum_{j=2^k}^{2^{k+1}} \int_{A_{ 2^{j}}} \left(\frac{2^{-j}}{2^k}\right)^pd\mu=\sum_{k=1}^{+\infty}\left(\frac{1}{2^k}\right)^p\sum_{j=2^k}^{2^{k+1}}\frac{\mu(A_{2^j})}{(2^j)^p} &\lesssim \sum_{k=1}^{+\infty} \left(\frac{1}{2^k}\right)^{p}\sum_{j=2^k}^{2^{k+1}}\left(\frac{1}{2^j}\right)^{p-Q}\\
    &\le\sum_{k=1}^{+\infty}\left(\frac{1}{2^k}\right)^{p-1}<\infty,
	\end{align*}
	and so $g$ is $p$-integrable on $X$. We now construct a function with upper gradient $g$ satisfying \eqref{eq6.1-0306} and \eqref{eq6.2-0306}. 
 
Let $\Gamma_{O,x}$ be the collection of all rectifiable curves $\gamma_{O,x}$ connecting $O$ and $x\in X$.  Given $k_1,k_2\in\N$, $k_2>k_1$, let $A_{k_1,k_2}:=B(O,2^{2^{k_2}})\setminus B(O,2^{2^{k_1}})$. If $k_2-k_1\ge 3$ and $k_1\ge 2$, then $A_{k_1,k_2}\supset\bigcup_{k=k_1}^{k_2-2}\bigcup_{j=2^k}^{2^{k+1}}A_{2^j}$, and so for $\gamma\in\Gamma_{O,x}$ with $|x|\ge 2^{2^{k_2}}$, we then have that   
\begin{equation}\label{eq:k_1k_2}
		\int_{\gamma \bigcap A_{k_1,k_2}} gds \ge\sum_{k=k_1}^{k_2-2} \sum_{j=2^k}^{2^{k+1}}\int_{\gamma\bigcap A_{2^j}}\frac{2^{-j}}{2^k}ds \geq k_2-k_1-2.
\end{equation}
 Here the last inequality is  obtained because $\int_{\gamma\bigcap A_{2^j}}ds \geq  \text{\rm diam}(\gamma\bigcap A_{2^j})\geq 2^j$, see for instance \cite[Proposition 5.1.11]{pekka}.
	We first construct a bounded function $u\in \dot N^{1,p}(X)$ satisfying \eqref{eq6.1-0306} and \eqref{eq6.2-0306}.
	Let $\{k_n\}_{n=2}^\infty$ be a sequence such that $k_{n+1}-k_{n}=10$.
	 We define
 \[ u(x)= \begin{cases}
        0 & \text{\rm \ \ if $x\in B(O,2^{2^{k_2}})$}\\
		 \min\left\{ \inf_{\gamma\in \Gamma_{O,x}}\int_{\gamma \bigcap A_{k_{n},k_{n+1}}}gds, 8\right\} & \text{\rm \ \ if $x\in A_{k_n,k_{n+1}}$ and $n\in\mathbb N$ even, }\\
		8-  \min\left\{ \inf_{\gamma\in \Gamma_{O,x}}\int_{\gamma \bigcap A_{k_{n},k_{n+1}}}gds, 8\right\} & \text{\rm \ \ if $x\in A_{k_n,k_{n+1}}$ and $n\in\mathbb N$ odd}.
		\end{cases}
  \]
 Then $g\in L^p_\mu(X)$ is an upper gradient of $u$ by \cite[Page 188-189]{pekka}, and so $u\in \dot N^{1,p}(X)$. If $n$ is even, then by \eqref{eq:k_1k_2}, we have that $5\le u\le 8$ in $A_{k_{n+1}-3,k_{n+1}}$, and similarly $0\le u\le 3$ in $A_{k_{n+1}-3,k_{n+1}}$ if $n$ is odd.  Hence we see that \eqref{eq6.1-0306} is satisfied.  Now, let $E$ be a $Q$-thin set. Since $A_{2^j}\setminus E\neq \emptyset$ for all sufficiently large $j$, by the first claim of Lemma~\ref{prop2.5-0605}, it follows that \eqref{eq6.2-0306} is satisfied.
	
	Next,  we can similarly construct an unbounded function $u$ with a $p$-integrable upper gradient $g$ on $(X,d,\mu)$ satisfying \eqref{eq6.1-0306} and \eqref{eq6.2-0306}. Indeed, 
	let $\{k_n\}_{n=10}^\infty$ be a sequence such that $k_{n+1}-k_{n}=n$, and let $u$ be defined by
	\[ u(x)= \begin{cases}
    0&\text{\rm \ \ if $x\in B(O,2^{2^{k_{10}}})$}\\
 \min\left\{ \inf_{\gamma\in \Gamma_{O,x}}\int_{\gamma \bigcap A_{k_{n},k_{n+1}}}gds, n-5\right\}& \text{\rm \ \ if $x\in A_{k_n,k_{n+1}}$ and $n\in\mathbb N$ even, }\\
		n-6-  \min\left\{ \inf_{\gamma\in \Gamma_{O,x}}\int_{\gamma \bigcap A_{k_{n},k_{n+1}}}gds, n-6\right\} & \text{\rm \ \ if $x\in A_{k_n,k_{n+1}}$ and $n\in\mathbb N$ odd}.
		\end{cases}
	\]
	By a similar argument as in the bounded case, we have that $u\in \dot N^{1,p}(X)$, and $u$ satisfies \eqref{eq6.1-0306} and \eqref{eq6.2-0306}. This completes the proof.\qedhere
	
\end{proof}

In the following examples, we show that we cannot replace the thick set $F$ with an almost thick set in Theorem~\ref{thm1.3-2208}.  We first give an easy example where we fail to have uniqueness of limits along almost thick sets.

\begin{example}\label{ex:EasyAlmostThick}
Let $1<Q<+\infty$, and suppose that $(X,d,\mu)$ is a complete, unbounded metric space with Ahlfors $Q$-regular measure $\mu$ supporting a $Q$-Poincar\'e inequality.  Then there exist almost thick sets $F_1,F_2\subset X$ and a function $u\in\dot N^{1,Q}(X)$ such that $\lim_{x\to+\infty,x\in F_1}u(x)=1$ and $\lim_{x\to+\infty,x\in F_2}u(x)=0$.
\end{example}

\begin{proof}
For each $j\in\N$ let $A_j:=B(O,2^{j+1})\setminus\overline B(O,2^j)$, and choose $x_j, x_j'\in A_j$ such that $B(x_j,2^{j-10})\Subset A_j$, $B(x_j',2^{j-10})\Subset A_j$ and $B(x_j, 2^{j-10})\cap B(x_j',2^{j-10})=\emptyset$.  We can find such  $x_j, x_{j}'$ because $X$ supports a Poincar\'e inequality, and is thus connected.  Let $0<r_j<2^{j-20}$ and let $B_j:=B(x_j,r_j), B_j':=B(x_j',r_j)$.  By Theorem~\ref{thm2.1-0506}, we have that 
\[
\text{\rm Cap}_Q(B_j,X\setminus B(x_j,2^{j-10}))\lesssim\log(2^{j-10}/r_j)^{1-Q},
\]
and so there exists $u_j\in \dot N^{1,Q}(X)$ with $u_j\equiv 1$ on $B_j$ and $u_j\equiv 0$ on $X\setminus B(x_j,2^{j-10})$ such that 
\[
\int_{X}g_{u_j}^Qd\mu\lesssim\log(2^{j-10}/r_j)^{1-Q}.
\]
Letting $u=\sum_{j\in\mathbb N} u_j$, which has $g=\sum_{j\in\mathbb N} g_{u_j}$ as an upper gradient, we have that
\[
\int_X g^Qd\mu\lesssim\sum_{j\in\mathbb N}\log(2^{j-2}/r_j)^{1-Q}<\infty
\]
provided $r_j$ is small enough.  Choosing $r_j=2^{j-20}e^{-2^j}$ achieves this, and so $u\in\dot N^{1,Q}(X)$ with this choice.  Letting $F_1=\bigcup_j B_j$, we see that $u\equiv 1$ on $F_1$.  Furthermore, using Lemma~\ref{lem:Measure-Content Bound} we have that 
\begin{align*}
   \Ha^\alpha_{+\infty}(F_1\cap A_j)=\Ha^\alpha_{+\infty}(B_j)\gtrsim\frac{\mu(B_j)}{(2^j)^{Q-\alpha}}\simeq \frac{r_{j}^Q 2^{j\alpha}}{2^{jQ}}\simeq (e^{-2^j})^Q2^{j\alpha},   
\end{align*}
for all $0<\alpha<Q$, and so $F_1$ is almost thick, with $\delta_j=(e^{-2^j})^Q$.  
Repeating the above argument, 
we have that $F_2:=\cup_j B_j'$ is almost thick with $\delta_j$ depending on the radii of the $B_j'$, and we see that $u\equiv 0$ on $F_2$.   
\end{proof}

 \begin{example}
 \label{example5.4-0709} Let $1<Q<+\infty$. Suppose that $(X,d,\mu)$ is a complete, unbounded metric measure space with metric $d$ and Ahlfors $Q$-regular measure $\mu$ supporting a $Q$-Poincar\'e inequality.  Then  there is an almost thick set $F$ which is not thick and a function $f\in \dot N^{1,Q}(X)$ such that
 \begin{equation}\label{eq6.3-0809}
 	\text{\rm  $\lim_{F\ni x\to+\infty}f(x)$ exists but\ \ }\lim_{t\to+\infty} f(\gamma(t)) \text{\rm \ \ does not exist for any $\gamma\in\Gamma^{+\infty}$.}
 \end{equation}
 \end{example}
 
 \begin{proof}Let $O\in X$ be a fixed point, and let 
%
%
$\{r_j\}_{j\in\mathbb N}, \{s_j\}_{j\in\mathbb N}$ be two sequences defined by 
 \[
 	r_1=2, s_1=2^{2}, s_{j}=2^{r_{j}}, r_{j+1}=2^{s_j} \ \ \text{\rm for  $j\in\mathbb N$}.
 \]
 Then 
 \begin{equation}\label{eq6.4-0809}
 	100 r_j<s_{j}<r_{j+1}/100 \ \ \text{\rm and \ \ } \lim_{j\to+\infty} \frac{s_j^\alpha}{r_{j+1}^\alpha}=0 \text{\rm \ \ for all $\alpha\in(0,Q)$}.
 \end{equation}
 We let 
\[\text{
$L_j:=\#\{i\in\mathbb N: r_j\leq 2^i\leq s_j/2\}$ \ and \  $N_j:=\#\{i\in\mathbb N: s_j\leq 2^i\leq r_{j+1}\}$
}
\]
for $j\in\mathbb N$.
 Then $L_j\geq 2^j$ and $N_j\geq 2^j$. We set 
 \[
 g_{s_{j}, r_j}:=  \sum_{i\in\mathbb N, r_j\leq 2^i\leq s_j/2} \frac{2^{-i}}{L_j} \chi_{B(O,2^{i+1})\setminus B(O,2^{i})} \text{\rm \ \ and \ \ } g_{r_{j+1}, s_j}:= \sum_{i\in\mathbb N, s_j\leq 2^i\leq r_{j+1}} \frac{2^{-i}}{N_j}  \chi_{B(O,2^{i+1})\setminus B(O,2^{i})}
 \]
for $j\in\mathbb N$.
 We have that $g:=\sum_{j\in\mathbb N} (g_{s_{j}, r_j}+  g_{r_{j+1}, s_j})$ belongs to $L_\mu^Q(X)$ because 
 \begin{align*}
 \int_Xg^Qd\mu=& \sum_{j\in\mathbb N}  \sum_{i\in\mathbb N, r_j\leq 2^i\leq s_j/2} \left(\frac{2^{-i}}{L_j}\right)^Q \int_{ B(O,2^{i+1})\setminus B(O,2^{i})}d\mu +  \sum_{j\in\mathbb N}  \sum_{i\in\mathbb N, s_j\leq 2^i\leq r_{j+1}} \left(\frac{2^{-i}}{N_j}\right)^Q \int_{ B(O,2^{i+1})\setminus B(O,2^{i})}d\mu \\
 \approx & \sum_{j\in\mathbb N} \frac{1}{L_j^Q}  \sum_{i\in\mathbb N, r_j\leq 2^i\leq s_j/2}  1 + \sum_{j\in\mathbb N}\frac{1}{N_j^Q}  \sum_{i\in\mathbb N, s_j\leq 2^i\leq r_{j+1}}  1\\
 =& \sum_{j\in\mathbb N} \left(\frac{1}{L_j^{Q-1}}+\frac{1}{N_j^{Q-1}}\right)\leq \sum_{j\in\mathbb N} \left(\frac{1}{(2^{j})^{Q-1}} +\frac{1}{(2^j)^{Q-1}}\right)<+\infty. 
 \end{align*}
 Let $\Gamma_{O,x}$ be the collection of all rectifiable curves $\gamma_{O,x}$ connecting $O$ and $x$ where $x\in X$. We define a function $u$ by setting
 \[	
 u(x)=
 \begin{cases}
 	\min\left\{1, \inf_{\gamma\in\Gamma_{O,x}} \int_{\gamma} g_{s_j,r_j}ds \right\}	 & \text{\rm \ \ if \ \ $x\in B(O,s_j/2)\setminus B(O,r_j)$},\\ 
	1& \text{\rm \ \ if \ \ $x\in B(O,s_j)\setminus B(O,s_j/2)$},\\
	1- \min\left\{1, \inf_{\gamma\in\Gamma_{O,x}} \int_{\gamma} g_{r_{j+1},s_j}ds \right\}	& \text{\rm \ \ if \ \ $x\in B(O,r_{j+1})\setminus B(O,s_j)$},
\end{cases}
 \]
for each $j\in\mathbb N$.
 Then $g$ is an upper gradient of $u$ and hence $u\in\dot N^{1,Q}(X)$.  Moreover, $u(x)=0$ for all $x$ such that $d(O,x)=r_j$.  

 For each $10\leq j\in\mathbb N$, we pick $x_j\in X$ so that $B(x_j,2^{-j})\subset B(O,2^{j+1})\setminus B(O,2^j))$, and so that 
 \begin{equation}\label{eq:r_i}B(x_j, 2^{-j})\cap\left(\bigcup_i\{x:|x|=r_i\}\right)=\varnothing.
 \end{equation}
  Since our space is connected by the Poincar\'e inequality,  then we  can find such an $x_j$   by the definition of the $r_i$.  For each $j\ge 10$, define 
 $$B_j:=B(x_j,2^{-j^{\frac{2}{Q-1}}}\times 2^{-j}),$$
 and let $F:=\bigcup_{j=10}^{+\infty}B_j$. By Lemma~\ref{lem:Measure-Content Bound} and Ahlfors $Q$-regularity, we have that  
\begin{align*}
\frac{\mathcal H^{\alpha}_{+\infty}(F\cap B(O,2^{j+1})\setminus B(O,2^j))}{(2^j)^\alpha}=\frac{\Ha^{\alpha}_{+\infty}(B_j)}{(2^j)^\alpha}\gtrsim\frac{\mu(B_j)}{(2^j)^Q}\simeq\frac{(2^{-j^{\frac{2}{Q-1}}}\times2^{-j})^Q}{(2^j)^Q}=:\delta_j\to 0
\end{align*}
as $j\to+\infty$ for all $0<\alpha<Q$.  Thus, $F$ is almost thick as in Definition~\ref{def:almost-thick}. However, $F$ is not thick since 
\[
\frac{\mathcal H^{\alpha}_{+\infty}(F\cap B(O,2^{j+1})\setminus B(O,2^j))}{(2^j)^\alpha}=\frac{\Ha^{\alpha}_{+\infty}(B_j)}{(2^j)^\alpha}\le \frac{(2^{-j^{\frac{2}{Q-1}}}\times2^{-j})^\alpha}{(2^j)^\alpha}\to 0
\]
as $j\to+\infty$ for all $0<\alpha<Q$.

For each $j\geq 10$, we use Theorem~\ref{thm2.1-0506} to choose a function $0\le\psi_j\le 1$ which is admissible for computing $\text{\rm Cap}_Q(B_j, X\setminus B(x_j,2^{-j}))$ such that $$\int_Xg_{\psi_j}^Qd\mu\lesssim \left( \log \left(\frac{1}{2^{-j^{\frac{2}{Q-1}}}}\right)\right)^{1-Q}\approx j^{-2},$$ where $g_{\psi_j}$ is the minimal $Q$-weak upper gradient of $\psi_j$.  By this estimate and the fact that each $\psi_j$ is supported in $B(x_j,2^{-j})$, it follows that $\sum_{j=10}^{+\infty}\psi_j\in  N^{1,Q}(X)$. 

Set $f:=\min\{1, u+\sum_{j=10}^{+\infty}\psi_j\}$.  Since $\psi_j\equiv 1$ on $B_j$, it follows that $f|_F\equiv 1$, and by \eqref{eq:r_i}, we have that $f|_{\{x:|x|=r_i\}}\equiv 0$ for all $i$.  Therefore \eqref{eq6.3-0809} follows, completing the proof. 
 \end{proof}

\begin{example}
	\label{example-p>n}Let $n\in\mathbb N$ be so that $n\geq 2$. We denote $\dot W^{1,p}(\mathbb R^n\setminus B(O,1))$  the classical homogeneous $p$-Sobolev space where $n<p<+\infty$ and $B(O,1)$  the ball with radius $1$ and center at the origin $O$. Then
 there exists a continuous function $u\in \dot W^{1,p}(\mathbb R^n\setminus B(O,1))$ such that its limit at infinity  along all radial curves exists but is not unique.
\end{example}
\begin{proof}
	Let $x=(x_1,x_2,\ldots,x_n)\in\mathbb R^n\setminus B(O,1)$. We define 
		$u(x)=\frac{x_n}{|x|} \text{\rm \ \ for  $x\in\mathbb R^n\setminus B(O,1)$}
		$
	 where $|x|^2=\sum_{i=1}^nx^2_i$.
 Since $||x|^2-x_n^2|\leq |x|^2$ and $|x_nx_i|\lesssim |x|^2$ for $1\le i\le n-1$, we obtain that for $p>n$,
		\begin{align*}
		\int_{\mathbb R^n\setminus B(O,1)}|\nabla u|^pdx
		= \int_{\mathbb R^n\setminus B(O,1)} \left ({\left | \frac{|x|^2-x_n^2}{|x|^{{3}}}\right |^2+\sum_{i=1}^{n-1}\left |\frac{x_nx_i}{|x|^{{3}}}\right |^2}\right )^{\frac{p}{2}}dx
		\lesssim  \int_{\mathbb R^n\setminus B(O,1)}\frac{1}{|x|^p}dx<{+\infty}.
		\end{align*}
	Let $x=(r,\xi_1,\xi_2,\ldots,\xi_{n-1})$ in the spherical coordinate.  That is, 
	\[\begin{cases}
		x_1=r\cos(\xi_1),\\
		 x_{i}=r\sin(\xi_1)\sin(\xi_2)\ldots\sin(\xi_{i-2})\cos(\xi_{i-1}), \rm where\ 2\leq i\leq n-1,\\
		  x_{n}=r\sin(\xi_1)\sin(\xi_2)\ldots\sin(\xi_{n-2})\sin(\xi_{n-1}),
	\end{cases}
	\]
	where $\xi_{n-1}\in[0,2\pi)$, $\xi_{i}\in[0,\pi]$ for  $1\le i\le n-2$.
	Hence for $\xi:=(\xi_1,\xi_2,\ldots,\ldots,\xi_{n-1})$ in the unit sphere,
	$\lim_{r\to{+\infty}}u(r\xi)=\sin(\xi_1)\sin(\xi_2)\ldots\sin(\xi_{n-2})\sin(\xi_{n-1}).
	$
	
\end{proof}

\section*{Acknowledgements}
J.K. has been supported by NSF Grant\#DMS-2054960 and the University of Cincinnati's University Research Center summer grant.  P.K. has been supported by the Academy of Finland Grant number 364210.
K.N. has been supported by the National Natural Science Foundation of China (No. 12288201).  Part of this research was conducted while J.K. was visiting the University of Jyv\"askyl\"a during the Spring of 2023, and he would like to thank this institution for its generous support and hospitality.
{
We thank the anonymous referees for their careful reading, thoughtful comments, for pointing out an unnoticed error, and
particularly for emphasizing the notion of polar coordinates, which  improved the presentation of the paper.
}

\vskip.2cm

\noindent{\bf Data Availability:} No datasets were generated or analyzed during this study.

\end{document}